\spnewtheorem{definition}[theorem]{Definition}{\bfseries}{\rmfamily}
\spnewtheorem{lemma}[theorem]{Lemma}{\bfseries}{\itshape}
\spnewtheorem{proposition}[theorem]{Proposition}{\bfseries}{\itshape}
\spnewtheorem{corollary}[theorem]{Corollary}{\bfseries}{\itshape}
\spnewtheorem{remark}[theorem]{Remark}{\bfseries}{\rmfamily}
\spnewtheorem{example}[theorem]{Example}{\bfseries}{\rmfamily}
\def\into{ \rightarrowtail }
\def\onto{ \twoheadrightarrow }
\def\splito{ \rightleftarrows }
\def\EE{ \mathbb{E} }
\def\VV{ \mathbb{V} }
\newcommand{\Eq}{ \ensuremath{\mathrm{Eq}} }
\begin{document}

\title*{On the naturalness of Mal'tsev categories}
\author{D. Bourn, M. Gran and P.-A. Jacqmin}
\institute{Dominique Bourn \at Universit\'e du Littoral, Laboratoire LMPA, BP 699, 62228 Calais Cedex, France. \email{bourn@univ-littoral.fr}
\and Marino Gran \at Universit\'e catholique de Louvain, IRMP, Chemin du Cyclotron 2, 1348 Louvain-la-Neuve, Belgique. \email{marino.gran@uclouvain.be}
\and Pierre-Alain Jacqmin \at University of Ottawa, Department of Mathematics and Statistics, 150 Louis-Pasteur, Ottawa, ON, K1N 6N5, Canada. \email{pjacqmin@uottawa.ca}}
%
%
\maketitle

\begin{quotation}
`Let us also recall that fundamental progress in homological algebra was achieved by replacing module categories by arbitrary abelian categories. (\dots) It is already clear from~\cite{Lambek92} that, in proving some basic lemmas for Mal'cev varieties, one never uses the fact that the category is varietal, but just that the semantical conditions hold. This suggests that one should investigate a purely categorical notion, generalizing that of an abelian category, to develop non-additive `variable' homological arguments.'

\flushright{A.\ Carboni, J.\ Lambek and M.C.\ Pedicchio in~\cite{CLP}, 1990.}
\end{quotation}

\abstract{Mal'tsev categories turned out to be a central concept in categorical algebra. On one hand, the simplicity and the beauty of the notion is revealed by the wide variety of characterizations of a markedly different flavour. Depending on the context, one can define Mal'tsev categories as those for which `any reflexive relation is an equivalence'; 
`any relation is difunctional'; `the composition of equivalence relations on a same object is commutative'; `each fibre of the fibration of points is unital' or `the forgetful functor from internal groupoids to reflexive graphs is saturated on subobjects'. For a variety of universal algebras, these are also equivalent to the existence in its algebraic theory of a Mal'tsev operation, i.e. a ternary operation $p(x,y,z)$ satisfying the axioms $p(x,x,y)=y$ and $p(x,y,y)=x$. On the other hand, Mal'tsev categories have been shown to be the right context in which to develop the theory of centrality of equivalence relations, Baer sums of extensions, and some homological lemmas such as the denormalized $3 \times 3$ Lemma, whose validity in a regular category is equivalent to the weaker `Goursat property', which has also turned out to be of wide interest.}



\section*{Introduction}

The study of Mal'tsev categories originates from a classical theorem of Mal'tsev in 1954~\cite{Maltsev}. For a variety of universal algebras $\mathbb{V}$ (i.e., a category of models of a finitary one-sorted algebraic theory), he proved that the following conditions are equivalent:
\begin{enumerate}[label=\textnormal{(M\arabic*)}]
\item \label{congruence commutativity} \emph{The composition of congruences is commutative, i.e., for any two congruences $R$ and $S$ on a same algebra in~$\mathbb{V}$, the equality $RS=SR$ holds.}
\item \label{ternary term p} \emph{The theory of $\mathbb{V}$ contains a ternary term $p$ satisfying the equations}
$$p(x,x,y)=y \; \; and \;\;  p(x,y,y)=x.$$
\end{enumerate}
Varieties satisfying these conditions are now commonly called `Mal'tsev varieties' \cite{Smith} (or $2$-permutable varieties), and such a term $p$ a `Mal'tsev operation'. This result has been extended by Lambek a few years later in~\cite{Lambek58} where he proved that these conditions are also equivalent to 
\begin{enumerate}[start=3,label=\textnormal{(M\arabic*)}]
\item \label{any relation is difunctional} \emph{Any homomorphic relation $R$ in~$\mathbb{V}$ is difunctional;}
\end{enumerate}
where difunctionality, a property introduced by Riguet~\cite{Riguet}, is the condition $RR^{\circ}R \leqslant R$.
Findlay~\cite{Findlay} and Werner~\cite{Werner} further characterised Mal'tsev varieties as those satisfying the condition
\begin{enumerate}[start=4,label=\textnormal{(M\arabic*)}]
\item \label{any reflexive relation is a congruence} \emph{Any homomorphic reflexive relation in~$\mathbb{V}$ is a congruence.}
\end{enumerate}

In his papers~\cite{Lambek58} and~\cite{Lambek92}, Lambek generalized classical group theory results, such as Goursat's theorem and Zassenhaus lemma, to Mal'tsev varieties. For instance, Goursat's theorem attests that given a homomorphic relation $R \leqslant A \times B$ in a Mal'tsev variety $\mathbb{V}$, the quotients $AR / R^{\circ}R$ and $RB / RR^{\circ}$ are isomorphic, where $AR=\{b \in B \,|\,\exists a \in A, aRb\}$ and $RB=\{a \in A \,|\,\exists b \in B, aRb\}$.

The proofs of these results relied on the so called calculus of relations and could therefore be transposed to a categorical context, by translating syntactic conditions into semantic ones. Actually, in her thesis~\cite{Meisen} supervised by Lambek, Meisen showed that the equivalence between~\ref{congruence commutativity}, \ref{any relation is difunctional} and \ref{any reflexive relation is a congruence} also holds for any exact category in the sense of Barr~\cite{Barr}. In that context, Mal'tsev categories were introduced by Carboni, Lambek and Pedicchio in~\cite{CLP} where they developed some aspects of homological lemmas in a non-abelian categorical context. 

Although axiom~\ref{congruence commutativity} must be stated in the context of categories with a good factorisation system (as regular categories~\cite{Barr} for instance), axioms~\ref{any relation is difunctional} and~\ref{any reflexive relation is a congruence} still make sense in any finitely complete category and turn out to be still equivalent in that context. Mal'tsev categories were thus defined by Carboni, Pedicchio and Pirovano~\cite{CPP} as finitely complete categories satisfying~\ref{any reflexive relation is a congruence} (and thus~\ref{any relation is difunctional}). A good part of the theory of Mal'tsev categories can be developed in the finitely complete setting.

Besides the homological aspects developed in~\cite{CLP}, the Mal'tsev context revealed itself particularly useful to develop the conceptual categorical notion of centrality of equivalence relations. Smith~\cite{Smith} had already opened the way in the varietal context, and then Janelidze made a further step by establishing a link between commutators and internal categories in Mal'tsev varieties~\cite{Janelidze-cat}. Then, partly based also on the results in~\cite{CPP}, Pedicchio was the first to explicitly consider a categorical approach to commutator theory in exact Mal'tsev categories (with coequalizers) \cite{Ped1, Ped2} and in more general categories~\cite{JanPed}. With the introduction of the notion of \emph{connector}~\cite{BG3}, centrality was expressed in its simplest form, and it could be fully investigated in the (regular) Mal'tsev context.
Finally, in the exact context, the construction of the Baer sum of extensions with abelian kernel equivalence relation emerged in a very natural way~\cite{Bourn3, Bourn5}. 

Several new results were later discovered in Mal'tsev categories in connection with the theory of central extensions~\cite{JK, Everaert, DG}, homological lemmas \cite{Bourn3x3, Cuboid} and, recently, with (non-abelian) embedding theorems~\cite{approx, jacqmin1, jacqmin2, jacqmin3}. 

The homological ambition of the initial project of Carboni, Lambek and Pedicchio~\cite{CLP} was not totally achieved. As a matter of fact, this early work was missing the notion of pointed protomodular category~\cite{Bourn6} which was necessary to define and investigate the conceptual categorical notion of short exact sequences. When furthermore the context is regular, it is then possible to establish the classical Short Five Lemma, the Noether isomorphism theorems, the Chinese Remainder's theorem, the Snake Lemma and the long exact homology sequence~\cite{BB}.

In this article we review some of the most striking features of Mal'tsev categories. 
In the first section, we study Mal'tsev operations and their properties. A partial version of them is used to define the notion of a connector between equivalence relations. The second section is devoted to the definition of Mal'tsev categories in the finitely complete context and relevant examples are given. In the third section, we give many characterizations of Mal'tsev categories showing the richness of the notion. One of them says that the fibres of the fibration of points are unital, which implies the uniqueness of a connector between two equivalence relations on a same object in a Mal'tsev category. In Section~\ref{section Stiffly and naturally Mal'tsev categories}, we study stiffly and naturally Mal'tsev categories. The fifth section is devoted to regular Mal'tsev categories. We give an additional characterization of them in this context via the notion of regular pushouts, which makes the proof of the existence of a Mal'tsev operation in the varietal context easier. In Section~\ref{RegularRel}, we further study regular Mal'tsev categories using the calculus of relations. This naturally brings us to have a glance at the weaker Goursat property, equivalent in the regular context to the denormalized $3 \times 3$ Lemma. We conclude this article with Section~\ref{section Baer sums in Mal'tsev categories} where we study Baer sums of extensions with abelian kernel equivalence relation in efficiently regular Mal'tsev categories.

\section*{Acknowledgements}

This work was partly supported by the collaboration project \emph{Fonds d'Appui \`a l'Internationalisation} (2018-2020) to strengthen the research collaborations in mathematics between the Universit\'e catholique de Louvain and the Universit\'e d'Ottawa. The third author gratefully thanks the National Science and Engineering Research Council for their generous support.

\section{Mal'tsev operations}

Let us start with the Mal'tsev theorem on varieties of algebras.

\begin{theorem}\label{CharacterizationM}
Let $\mathbb{V}$ be a variety of universal algebras. The following statements are equivalent:
\begin{itemize}
\item[\ref{congruence commutativity}] For any two congruences $R$ and $S$ on a same algebra in~$\mathbb{V}$, the equality $RS=SR$ holds.
\item[\ref{ternary term p}] The theory of $\mathbb{V}$ contains a ternary term $p$ satisfying the equations
$$p(x,x,y)=y \; \; and \;\;  p(x,y,y)=x.$$
\item[\ref{any relation is difunctional}] Any homomorphic relation in~$\mathbb{V}$ is difunctional, i.e., $RR^{\circ}R \leqslant R$.
\item[\ref{any reflexive relation is a congruence}] Any homomorphic reflexive relation in~$\mathbb{V}$ is a congruence.
\end{itemize}
\end{theorem}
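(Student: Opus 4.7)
The plan is to close the cycle of implications $\ref{ternary term p} \Rightarrow \ref{any relation is difunctional} \Rightarrow \ref{any reflexive relation is a congruence} \Rightarrow \ref{congruence commutativity} \Rightarrow \ref{ternary term p}$, using for each passage the condition in the direction where it is most naturally applied. Throughout, one uses that a homomorphic relation $R$ on an algebra $A$ (respectively between algebras $A$ and $B$) is, by definition, a subalgebra of $A \times A$ (respectively of $A \times B$), so that operations act componentwise on pairs of $R$.

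The first three implications are quick. For $\ref{ternary term p} \Rightarrow \ref{any relation is difunctional}$, given $aRb$, $cRb$, $cRd$, I apply the Mal'tsev term $p$ componentwise to obtain $(p(a,c,c),p(b,b,d))=(a,d)\in R$. For $\ref{any relation is difunctional} \Rightarrow \ref{any reflexive relation is a congruence}$, the symmetry of a reflexive homomorphic $R$ follows by combining $aRb$ with the trivial pairs $bRb$ and $aRa$, and transitivity follows by combining $aRb$ and $bRc$ with $bRb$. For $\ref{any reflexive relation is a congruence} \Rightarrow \ref{congruence commutativity}$, I observe that the relational composite $RS$ of two congruences on the same algebra is a reflexive homomorphic relation, hence by hypothesis a congruence; its symmetry then forces $RS = (RS)^{\circ} = S^{\circ}R^{\circ} = SR$.

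The main obstacle is the final, genuinely syntactic step $\ref{congruence commutativity} \Rightarrow \ref{ternary term p}$, where one must extract a \emph{term} from a purely semantic hypothesis. My approach is to test $\ref{congruence commutativity}$ on the free algebra $F$ on three generators $x,y,z$, using the congruences $R$ and $S$ generated respectively by $(x,y)$ and $(y,z)$. Since $xRy Sz$, we have $(x,z)\in RS$, so $\ref{congruence commutativity}$ produces an element $t\in F$ with $xStRz$. By freeness, $t$ is represented by a ternary term $p(x,y,z)$; reading $xSt$ inside $F/S$, which by the universal property of $F$ is naturally isomorphic to the free algebra on $\{x,y\}$ with $z$ identified to $y$, yields the identity $p(x,y,y)=x$, while reading $tRz$ inside $F/R\cong F(x,z)$ with $y$ identified to $x$ yields $p(x,x,z)=z$. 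The delicate point is precisely this translation, since it requires identifying the quotients of the free algebra by principal congruences with smaller free algebras; but once this is done, the Mal'tsev identities drop out automatically.
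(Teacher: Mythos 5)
Your proof is correct, and your first two implications ($\ref{ternary term p} \Rightarrow \ref{any relation is difunctional}$ and $\ref{any relation is difunctional} \Rightarrow \ref{any reflexive relation is a congruence}$) coincide with the paper's. The remaining two steps diverge. For $\ref{any reflexive relation is a congruence} \Rightarrow \ref{congruence commutativity}$ you use the symmetry of the congruence $RS$ to get $RS=(RS)^{\circ}=S^{\circ}R^{\circ}=SR$ in one line, whereas the paper identifies both $RS$ and $SR$ with the join $R\vee S$; both are valid, and your argument is essentially the one the paper itself deploys later in Theorem~\ref{permutability}. The genuine difference is $\ref{congruence commutativity} \Rightarrow \ref{ternary term p}$: you give Mal'tsev's classical syntactic argument, permuting the principal congruences $\Theta(x,y)$ and $\Theta(y,z)$ on the free algebra $F(x,y,z)$ and reading the witness $t=p(x,y,z)$ of $xStRz$ modulo each congruence, while the paper postpones this step to Theorem~\ref{MaltseVar}, where the term is extracted from the surjectivity of the comparison map $X+X+X\to \Eq[\nabla]$ coming from a regular pushout of codiagonals (Corollary~\ref{regpush2}). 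Your route is elementary and self-contained, resting only on the standard fact --- which you rightly flag as the delicate point --- that $F(x,y,z)/\Theta(y,z)$ is free on $\{x,y\}$; this is immediate from the universal property, since homomorphisms out of that quotient correspond exactly to functions on $\{x,y,z\}$ sending $y$ and $z$ to the same element, so no further work is needed there. The paper's route costs the machinery of regular Mal'tsev categories and regular pushouts, but in exchange exhibits the term as an instance of a categorical phenomenon that persists beyond varieties. A small logical bonus of your version: you close the cycle starting from $\ref{congruence commutativity}$ itself, whereas Theorem~\ref{MaltseVar} takes the Mal'tsev-category hypothesis $\ref{any reflexive relation is a congruence}$ as its starting point, so your arrangement makes the structure of the equivalence slightly more transparent at this stage of the text.
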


\begin{proof}
Let us start by proving $\ref{ternary term p} \Rightarrow \ref{any relation is difunctional}$. We consider a homomorphic relation $R \leqslant A \times B$ in $\mathbb{V}$ and elements $a_1, a_2$ in $A$ and $b_1, b_2$ in $B$ such that $a_1Rb_1$, $a_2Rb_1$ and $a_2Rb_2$. We deduce that
$$a_1=p(a_1,a_2,a_2) R p(b_1,b_1,b_2)=b_2$$
proving $RR^{\circ}R \leqslant R$.

We now prove $\ref{any reflexive relation is a congruence} \Rightarrow \ref{congruence commutativity}$. Given two congruences $R$ and $S$ on the same algebra $A$, the composition $RS$ is reflexive and therefore a congruence by assumption. Thus, $RS=R \vee S$ is the supremum of $R$ and $S$ in the lattice of congruences on $A$. Indeed, $R = R1_A \leqslant RS$ and $S \leqslant RS$ and given a congruence $T$ on $A$ such that $R \leqslant T$ and $S \leqslant T$, we have $RS \leqslant TT = T$. Similarly, $SR=S\vee R=R\vee S=RS$ proving the desired commutativity.

The implication $\ref{any relation is difunctional} \Rightarrow \ref{any reflexive relation is a congruence}$ immediately follows from the next lemma and the proof of $\ref{congruence commutativity} \Rightarrow \ref{ternary term p}$ is postponed to Section~\ref{section regular Maltsev categories} (Theorem~\ref{MaltseVar}).
\end{proof}

\begin{lemma}\label{reflexive+difunctional}
A homomorphic relation $R \leqslant A \times A$ is an equivalence relation if and only if it is reflexive and difunctional.
\end{lemma}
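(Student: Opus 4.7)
The plan is to establish both directions of the biconditional by elementary elementwise reasoning, using the definition of $RR^{\circ}R$ together with reflexivity to supply the crucial witnesses.

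For the ``only if'' direction, suppose $R$ is an equivalence relation. Reflexivity is immediate from the definition. For difunctionality, symmetry of $R$ gives $R^{\circ}=R$, so $RR^{\circ}R = RRR$, and transitivity yields $RRR \leqslant R$.

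For the ``if'' direction, assume $R$ is reflexive and $RR^{\circ}R\leqslant R$; I need to deduce symmetry and transitivity. Recall that $(x,y)\in RR^{\circ}R$ means there exist $u,v$ with $x\,R\,u$, $v\,R\,u$ and $v\,R\,y$. For symmetry, given $a\,R\,b$, I choose $u=b$ and $v=a$: then $b\,R\,b$ and $a\,R\,a$ hold by reflexivity and $a\,R\,b$ by hypothesis, so $(b,a)\in RR^{\circ}R\leqslant R$ and hence $b\,R\,a$. For transitivity, given $a\,R\,b$ and $b\,R\,c$, I choose $u=v=b$: then $a\,R\,b$, $b\,R\,b$, $b\,R\,c$ all hold, so $(a,c)\in RR^{\circ}R\leqslant R$, giving $a\,R\,c$.

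The only real subtlety is bookkeeping with the convention for the triple composition; once the indexing of the witnesses is pinned down, reflexivity carries essentially all the weight at both ends of the chain. No nontrivial algebraic machinery is required, which also indicates why the argument will transfer seamlessly to any finitely complete category once relations are reinterpreted as subobjects of a product.
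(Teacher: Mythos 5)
Your proof is correct and follows essentially the same route as the paper: the forward direction is the routine observation that symmetry and transitivity give $RR^{\circ}R=RRR\leqslant R$, and the converse instantiates difunctionality with the same witnesses the paper uses (for symmetry, $bRb$, $aRb$, $aRa$ yield $bRa$; for transitivity, $aRb$, $bRb$, $bRc$ yield $aRc$). The bookkeeping of the composition convention is handled correctly, so there is nothing to add.
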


\begin{proof}
The `only if part' being trivial, let us prove the `if part'. Assume $R$ is reflexive and difunctional. Let us first prove it is symmetric. If $xRy$ for some elements $x$ and $y$ in $A$, we know by reflexivity that $yRy$, $xRy$ and $xRx$. Thus difunctionality implies $yRx$ which shows the symmetry of $R$. For transitivity, let now $x,y,z \in A$ be such that $xRyRz$. Since $xRy$, $yRy$ and $yRz$, we have $xRz$ also by difunctionality, proving that $R$ is transitive.
\end{proof}

Of course, this lemma can be generalized internally to any finitely complete category using the Yoneda embedding.

\begin{remark}
The equivalence between \ref{ternary term p} and \ref{any relation is difunctional} in Theorem~\ref{CharacterizationM} can be displayed in the form of a matrix as $\left(\begin{array}{ccc|c}x&y&y&x\\u&u&v&v\end{array}\right)$. Reading it vertically, this matrix represents difunctionality of a relation. Indeed, a relation $R\leqslant A\times B$ is difunctional when $xRu$, $yRu$ and $yRv$ (the left columns) imply $xRv$ (the right column). On the other hand, reading the matrix horizontally, the identities $p(x,y,y)=x$ and $p(u,u,v)=v$ of \ref{ternary term p} appear. This phenomenon is not at all a coincidence, and the general theory of such matrices has been introduced in~\cite{janelidzematrices} to understand many properties of varieties of universal algebras related to \emph{Mal'tsev conditions} (such as \ref{ternary term p}) from a categorical perspective.
\end{remark}

Let us now have a closer look at Mal'tsev operations.

\begin{definition}
A \emph{Mal'tsev operation} on a set $X$ is a ternary operation $p: X\times X\times X \to X$ such that the identities $p(x,x,y)=y$ and $p(x,y,y)=x$ hold for any $x,y$ in $X$. A \emph{Mal'tsev algebra} is a set $X$ endowed with a Mal'tsev operation. We denote by $\mathsf{Mal}$ the variety of Mal'tsev algebras (including the empty set $\varnothing$).
\end{definition}

\begin{definition}
Let $X$ be a set and $p\colon X \times X \times X \to X$ a Mal'tsev operation. We say that
\begin{itemize}
\item $p$ is \emph{left associative} if it satisfies the axiom: $$p(p(x,y,z),z,w)=p(x,y,w)$$
\item $p$ is \emph{right associative} if it satisfies the axiom: $$p(x,y,p(y,z,w))=p(x,z,w)$$
\item $p$ is \emph{associative} if it satisfies the axiom: $$p(p(x,y,z),u,v)=p(x,y,p(z,u,v))$$
\item $p$ is \emph{commutative} if it satisfies the axiom: $$p(x,y,z)=p(z,y,x)$$
\item $p$ is \emph{autonomous} if it is a morphism of Mal'tsev algebras $X^3\to X$, i.e., if it satisfies the axiom:
\begin{align*}
&p(p(x_1,y_1,z_1),p(x_2,y_2,z_2),p(x_3,y_3,z_3))\\
&=p(p(x_1,x_2,x_3),p(y_1,y_2,y_3),p(z_1,z_2,z_3))
\end{align*}
\end{itemize}
\end{definition}

\begin{lemma}\label{lemma p cancelative}
Let $p$ be a right associative Mal'tsev operation $p\colon X \times X \times X \to X$ on a set $X$. If $x,y,a,b \in X$ are elements such that
$p(x,y,a)=p(x,y,b)$, then $a=b$.
\end{lemma}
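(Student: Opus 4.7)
The plan is to use right associativity to ``cancel'' the first two arguments $x,y$ from the equation $p(x,y,a)=p(x,y,b)$ by pre-composing with the operation on the outside and then absorbing the $x$.

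Concretely, I would start from the given equality $p(x,y,a)=p(x,y,b)$ and pick an arbitrary element $z\in X$. Applying $p(z,x,-)$ to both sides gives
\[
p(z,x,p(x,y,a))=p(z,x,p(x,y,b)).
\]
Now the right associativity axiom $p(u,v,p(v,w,t))=p(u,w,t)$ applies directly on each side (with $u=z$, $v=x$, $w=y$, and $t=a$ or $t=b$), so the equation collapses to $p(z,y,a)=p(z,y,b)$. Crucially, this holds for every $z\in X$, so we have in fact strengthened the hypothesis.

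The final step is to specialize $z=y$ and invoke the Mal'tsev axiom $p(y,y,c)=c$. This gives $a=p(y,y,a)=p(y,y,b)=b$, which is the desired conclusion.

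The conceptual obstacle here is not computational but strategic: one must notice that right associativity, applied from the outside by pre-composing with $p(z,x,-)$, lets one erase the middle variable $x$ and replace it by $y$. Once this move is identified the proof is essentially a two-line manipulation; left associativity would permit a symmetric trick, but no additional axiom (such as commutativity or associativity in full) is needed.
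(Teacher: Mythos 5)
Your proof is correct and is essentially the paper's own argument: the paper's one-line computation $a=p(y,y,a)=p(y,x,p(x,y,a))=p(y,x,p(x,y,b))=p(y,y,b)=b$ is exactly your manipulation with $z$ specialized to $y$ from the outset. The detour through an arbitrary $z$ adds nothing but changes nothing either.
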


\begin{proof}
It follows from\\
\hspace*{0,4cm} $a=p(y,y,a)=p(y,x,p(x,y,a))=p(y,x,p(x,y,b))=p(y,y,b)=b.$
\end{proof}

\begin{proposition}\label{associative=left+right}
Let $p$ be a Mal'tsev operation $p\colon X \times X \times X \to X$ on a set $X$. Then $p$ is associative if and only if it is left associative and right associative.
\end{proposition}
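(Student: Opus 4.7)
The plan is to prove each direction separately. The forward direction follows from direct specialization, while the reverse direction requires an elegant two-step rewrite.

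For the implication from associativity to both left and right associativity, assume $p$ satisfies $p(p(x,y,z),u,v)=p(x,y,p(z,u,v))$ for all inputs. Substituting $u=z$ reduces the right-hand side to $p(x,y,p(z,z,v))=p(x,y,v)$ thanks to the Mal'tsev identity $p(z,z,v)=v$, yielding left associativity. Substituting $z=y$ reduces the left-hand side to $p(p(x,y,y),u,v)=p(x,u,v)$ thanks to $p(x,y,y)=x$, yielding right associativity (after renaming the free variables).

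For the converse, assume $p$ is both left and right associative. The key observation is that right associativity, read backwards, lets one insert a chosen pivot: for any $\alpha,\beta,\gamma,\delta$, it says $p(\alpha,\gamma,\delta)=p(\alpha,\beta,p(\beta,\gamma,\delta))$. Applied with $\alpha=p(x,y,z)$, $\gamma=u$, $\delta=v$, and the strategic choice $\beta=z$, this gives
\[
p(p(x,y,z),u,v)=p(p(x,y,z),z,p(z,u,v)).
\]
Left associativity, applied to the right-hand side with last argument $w=p(z,u,v)$, then collapses it to $p(x,y,p(z,u,v))$, which is exactly the associativity identity.

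The only real subtlety lies in spotting the trick of the converse: one must notice that right associativity can be used in reverse to introduce the pivot $z$ between $p(x,y,z)$ and the pair $(u,v)$, so that left associativity can subsequently absorb it. In particular, the cancellation property from Lemma~\ref{lemma p cancelative} plays no role in this argument.
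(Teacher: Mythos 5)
Your proof is correct and follows essentially the same route as the paper: specialization of the associativity identity (using the Mal'tsev axioms) for the forward direction, and the chain $p(p(x,y,z),u,v)=p(p(x,y,z),z,p(z,u,v))=p(x,y,p(z,u,v))$ for the converse, which is exactly the paper's computation. Your observation that Lemma~\ref{lemma p cancelative} is not needed here is also consistent with the paper, which reserves that lemma for Proposition~\ref{proposition p autonomous}.
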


\begin{proof}
If $p$ is associative, then we can compute $$p(p(x,y,z),z,w)=p(x,y,p(z,z,w))=p(x,y,w)$$ which proves that $p$ is left associative. Right associativity is proved similarly.

If now $p$ is both left associative and right associative, we can compute
$$p(p(x,y,z),u,v)=p(p(x,y,z),z,p(z,u,v))=p(x,y,p(z,u,v))$$
which proves that $p$ is associative.
\end{proof}

\begin{proposition}\label{proposition p autonomous}
Let $p$ be a Mal'tsev operation $p\colon X \times X \times X \to X$ on a set~$X$. Then $p$ is autonomous if and only if it is associative and commutative.
\end{proposition}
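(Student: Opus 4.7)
The plan is to prove the two implications by quite different techniques.

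For the forward implication, I would extract commutativity and associativity by evaluating the autonomy identity on two carefully chosen $3\times 3$ arrays, so that the Mal'tsev axioms $p(x,x,y)=y$ and $p(x,y,y)=x$ collapse most of the nine entries. Concretely, on the array
$$\begin{pmatrix} y & y & x \\ y & y & y \\ z & y & y \end{pmatrix}$$
the three row-wise values of $p$ are $x$, $y$, $z$, while the three column-wise values are $z$, $y$, $x$; autonomy therefore yields $p(x,y,z)=p(z,y,x)$. On the array
$$\begin{pmatrix} a & b & b \\ b & b & b \\ c & d & e \end{pmatrix}$$
the rows give $a$, $b$, $p(c,d,e)$ and the columns give $p(a,b,c)$, $d$, $e$, so autonomy yields $p(a,b,p(c,d,e))=p(p(a,b,c),d,e)$, which is precisely associativity.

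For the reverse implication I would reduce autonomy to a statement in an abelian group. If $X$ is empty all three properties hold vacuously, so fix a base point $e\in X$ and define $x+y:=p(x,e,y)$, $-x:=p(e,x,e)$, $0:=e$. Using the Mal'tsev identities together with associativity and commutativity of $p$, I would verify that $(X,+,0)$ is an abelian group: for instance $(x+y)+z=p(p(x,e,y),e,z)=p(x,e,p(y,e,z))=x+(y+z)$ by associativity, $x+y=p(x,e,y)=p(y,e,x)=y+x$ by commutativity, and $x+(-x)=p(x,e,p(e,x,e))=p(p(x,e,e),x,e)=p(x,x,e)=e$. Next, invoking right associativity (which is available via Proposition~\ref{associative=left+right}) I would rewrite $p(x,e,-y)=p(x,e,p(e,y,e))=p(x,y,e)$, and then use associativity once more to conclude $p(x,y,z)=p(p(x,y,e),e,z)=(x-y)+z=x-y+z$.

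With the normal form $p(x,y,z)=x-y+z$ in hand, both sides of the autonomy equation unfold to the nine-term alternating sum $x_1-y_1+z_1-x_2+y_2-z_2+x_3-y_3+z_3$, up to a reordering of the summands that is legitimate in any abelian group. The main obstacle I anticipate is neither direction individually but the bookkeeping for the $\Leftarrow$ direction: one must patiently deduce the abelian group axioms and the formula $p(x,y,z)=x-y+z$ from just the two Mal'tsev identities together with associativity and commutativity of $p$; once this algebraic interlude is set up, autonomy is essentially automatic.
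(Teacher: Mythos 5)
Your proof is correct, but the reverse implication takes a genuinely different route from the paper's. For the forward direction you do essentially what the paper does: substitute carefully chosen $3\times 3$ arrays into the autonomy identity so that the Mal'tsev axioms collapse most of the nine entries (your array for commutativity is literally the one in the paper; the one for associativity is a harmless variant of it), and both of your arrays check out. For the converse, the paper stays purely equational: it first uses the cancellation Lemma~\ref{lemma p cancelative} to establish the auxiliary identity $p(p(x,y,z),u,v)=p(x,p(y,z,u),v)$ and then derives the nine-variable identity by a chain of applications of left and right associativity and commutativity. You instead dispose of the empty case, pick a base point $e$, make $X$ an abelian group via $x+y=p(x,e,y)$, derive the normal form $p(x,y,z)=x-y+z$, and read off autonomy as a permutation of a nine-term alternating sum; each intermediate step you list (the group axioms, $x-y=p(x,y,e)$ via right associativity, and $(x-y)+z=p(x,y,z)$ via left associativity, both available from Proposition~\ref{associative=left+right}) is verified correctly. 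Your argument is shorter and more conceptual --- it is the classical identification of associative commutative Mal'tsev algebras with abelian-group heaps, which resurfaces later in the paper in Corollary~\ref{aff} and in Section~\ref{section Baer sums in Mal'tsev categories} --- but it costs a choice of global base point. The paper's pointless equational computation is the one that transfers verbatim to partial Mal'tsev operations (connectors) and to generalized-element arguments internal to a category, where a global element $e$ need not exist; that is presumably why the authors prefer it.
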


\begin{proof}
We first assume that $p$ is autonomous. We can then compute
\begin{align*}
&p(p(x,y,z),u,v)=p(p(x,y,z),p(x,x,u),p(x,x,v))\\
&=p(p(x,x,x),p(y,x,x),p(z,u,v)) =p(x,y,p(z,u,v))
\end{align*}
and
\begin{align*}
&p(x,y,z)=p(p(y,y,x),p(y,y,y),p(z,y,y))\\
&=p(p(y,y,z),p(y,y,y),p(x,y,y)) =p(z,y,x)
\end{align*}
proving associativity and commutativity.

Let us now assume that $p$ is associative and commutative. In that case, we have
\begin{align*}
&p(p(y,z,u),x,p(p(x,y,z),u,v)) = p(p(p(y,z,u),x,p(x,y,z)),u,v)\\
&= p(p(p(y,z,u),y,z),u,v) = p(p(p(u,z,y),y,z),u,v)\\
&= p(p(u,z,z),u,v) = p(u,u,v) = v\\
&=p(p(y,z,u),p(y,z,u),v) = p(p(y,z,u),x,p(x,p(y,z,u),v))
\end{align*}
showing via Lemma~\ref{lemma p cancelative} that
$$p(p(x,y,z),u,v)=p(x,p(y,z,u),v).$$
Using this, together with left and right associativity and commutativity, we have
\begin{align*}
&p(p(x_1,y_1,z_1),p(x_2,y_2,z_2),p(x_3,y_3,z_3))\\
&= p(p(x_1,p(x_2,y_2,z_2),z_1),y_1,p(x_3,y_3,z_3))\\
&= p(p(x_1,x_2,p(y_2,z_2,z_1)),y_1,p(x_3,y_3,z_3))\\
&= p(x_1,x_2,p(p(y_2,z_2,z_1),y_1,p(x_3,y_3,z_3)))\\
&= p(x_1,x_2,p(x_3,y_1,p(p(y_2,z_2,z_1),y_3,z_3)))\\
&= p(x_1,x_2,p(x_3,y_1,p(p(y_2,y_3,z_1),z_2,z_3)))\\
&= p(x_1,x_2,p(x_3,y_1,p(y_2,y_3,p(z_1,z_2,z_3))))\\
&= p(p(x_1,x_2,x_3),y_1,p(y_2,y_3,p(z_1,z_2,z_3)))\\
&= p(p(x_1,x_2,x_3),p(y_1,y_2,y_3),p(z_1,z_2,z_3))
\end{align*}
which concludes the proof.
\end{proof}

Now that we have studied some properties of Mal'tsev operations and how they interplay, we can define the notion of a connector of equivalence relations as in~\cite{BG} (slightly more general than the notion of pregroupoid, due to A.\ Kock~\cite{Kock}, see also~\cite{JohnstoneH}):

\begin{definition}\label{connector}
Let $R$ and $S$ be two equivalence relations on a same object $X$ in a finitely complete category, and consider the following pullback:
$$\xymatrix{R \times_X S \ar[r]^-{p_1^S} \ar[d]_-{p_0^R} & S \ar[d]^-{d_0^S} \\ R \ar[r]_-{d_1^R} & X}$$
A \emph{connector} between $R$ and $S$ is a morphism $p\colon R \times_X S \to X$ satisfying the following axioms:
\begin{enumerate}
\item $xSp(xRySz)$ and $p(xRySz)Rz$;
$$\xymatrix@!0@R=3em@C=3em{x \ar@{-}[rr]^{R} \ar@{-}[rd]_-{S} && y \ar@{-}[rd]^-{S} \\ & p(xRySz) \ar@{-}[rr]_-{R} && z}$$
\item $p$ is a partial Mal'tsev operation, i.e., $p(xRxSy)=y$ and $p(xRySy)=x$;
\item $p$ is left and right associative, i.e., $p(p(xRySz)RzSw)=p(xRySw)$ and $p(xRySp(yRzSw))=p(xRzSw)$.
\end{enumerate}
 In this case we say that the connector $p$ makes $R$ and $S$ \emph{centralize} each other.
\end{definition}

By the (partial version of) Proposition~\ref{associative=left+right}, such a connector is associative, i.e., for any element $x,y,z,u,v \in X$ such that $xRySzRuSv$,
we have $$p(p(xRySz)RuSv)=p(xRySp(zRuSv)).$$

\begin{example}
If $\nabla_X$ represents the largest equivalence relation on $X$, a connector between $\nabla_X$ and $\nabla_X$ is simply an associative Mal'tsev operation on~$X$.
\end{example}

Given an arrow $f \colon T \rightarrow X$ we write ($\Eq[f], p_1,p_2)$ for its \emph{kernel pair} which is underlying an equivalence relation defined by the pullback
 $$
 \xymatrix{\Eq[f] \ar[r]^-{p_2} \ar[d]_{p_1} & T\ar[d]^f \\
 T \ar[r]_f & X.
 }
  $$
  
\begin{example}\label{ddif}
Given a relation $(f,g)\colon T\into X\times Y$ and the associated kernel equivalence relations $\Eq[f]$ and $\Eq[g]$ of $f$ and $g$, respectively, this relation $T$ is difunctional if and only if $\Eq[f]$ and $\Eq[g]$ centralize each other.
\end{example}

\begin{example}\label{exGroupoid}\cite{CPP, Janelidze-cat}
Given a reflexive graph
$$\xymatrix{X_1 \ar@<5pt>[r]^-{d_0} \ar@<-5pt>[r]_-{d_1} & X_0 \ar[l]|-{s_0}}$$
in a finitely complete category and $\Eq[d_0]$ and $\Eq[d_1]$ the kernel equivalence relations of $d_0$ and $d_1$ respectively, connectors between $\Eq[d_0]$ and $\Eq[d_1]$ are in $1$-to-$1$ correspondence with groupoid structures on the reflexive graph.
\end{example}

Considering again two equivalence relations $R$ and $S$ on $X$ in a finitely complete category, we define $R \square S$ via the following pullback
$$\xymatrix{R \square S \ar@{ >->}[rr] \ar@{ >->}[d] && S \times S \ar@{ >->}[d]^-{s \times s} \\ R \times R \ar@{ >->}[r]_-{r \times r} & X^4 \ar[r]_-{\text{tw}_{2,3}} & X^4}$$
where $\text{tw}_{2,3}\colon X^4 \to X^4$ is the isomorphism defined by $\text{tw}_{2;3}(x,y,w,z)=(x,w,y,z)$. In set theoretical terms, $R\square S$ is the set of four-tuples $(x,y,w,z)$ such that $xRy$, $wRz$, $xSw$ and $ySz$, often depicted as:
$$\xymatrix{x \ar@{-}[r]^-{R} \ar@{-}[d]_-{S} & y \ar@{-}[d]^-{S} \\ w \ar@{-}[r]_{R} & z}$$ 
We also consider the factorization
\begin{equation} \label{canonical}
\alpha\colon R \square S \to R \times_X S\colon (x,y,w,z) \mapsto (x,y,z).
\end{equation}
If $R \cap S = \Delta_X$ (the discrete relation on $X$), this factorization is a monomorphism. Indeed, if $(x,y,w,z)$ and $(x,y,w',z)$ are in $R\square S$, then $wRzRw'$ and $wSxSw'$, showing that $w (R\cap S) w'$ and thus $w=w'$.

Moreover, given a connector $p\colon R \times_X S \to X$, we can construct a section for $\alpha\colon R \square S \to R \times_X S$ via
$$R \times_X S \to R \square S\colon (x,y,z) \mapsto (x,y,p(x,y,z),z).$$ These observations lead us to the following proposition.

\begin{proposition}\label{oneconnector}
Given two equivalence relations $R$ and $S$ on the same object $X$ in a finitely complete category, if $R \cap S = \Delta_X$, then there is at most one connector between $R$ and $S$.
\end{proposition}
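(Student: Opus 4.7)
The plan is to exploit the two observations already made in the paragraph preceding the proposition: first, that under the hypothesis $R\cap S=\Delta_X$ the canonical factorization $\alpha\colon R\square S \to R\times_X S$ is a monomorphism; and second, that any connector $p$ between $R$ and $S$ determines a section $s_p\colon R\times_X S \to R\square S$ of $\alpha$, defined on generalized elements by $(x,y,z)\mapsto(x,y,p(x,y,z),z)$. Uniqueness of $p$ will then reduce to the general fact that a monomorphism admits at most one section.

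The steps, in order, are as follows. First, I would make explicit that the assignment $p\mapsto s_p$ is well-defined: axiom (1) of Definition~\ref{connector}, namely $xSp(xRySz)$ and $p(xRySz)Rz$, is precisely what is needed for $(x,y,p(x,y,z),z)$ to lie in $R\square S$; and $\alpha\circ s_p=\mathrm{id}_{R\times_X S}$ holds by construction. Second, I would observe that $p$ can be recovered from $s_p$ by post-composing with the projection $R\square S\to X$, $(x,y,w,z)\mapsto w$, so that the map $p\mapsto s_p$ is injective. Third, given two connectors $p_1$ and $p_2$, both $s_{p_1}$ and $s_{p_2}$ are sections of the monomorphism $\alpha$; since a monomorphism has at most one section (if $\alpha s_1=\alpha s_2$ and $\alpha$ is mono, then $s_1=s_2$), we deduce $s_{p_1}=s_{p_2}$, whence $p_1=p_2$.

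To make the generalized-element argument rigorous in the finitely complete setting, I would either invoke the Yoneda embedding (which preserves finite limits and hence transports the internal reasoning to a pointwise argument in $\mathbf{Set}$), or spell out the factorization through $R\square S$ directly at the level of morphisms, using the universal properties of the pullbacks defining $R\times_X S$ and $R\square S$. I do not anticipate any real obstacle here: the substantive content, namely the monomorphy of $\alpha$ and the construction of $s_p$, is already in the text preceding the statement, and what remains is merely to package these two facts together with the elementary principle that sections of monomorphisms are unique.
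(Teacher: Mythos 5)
Your proposal is correct and follows exactly the route the paper intends: the text preceding the proposition establishes that $\alpha\colon R\square S\to R\times_X S$ is a monomorphism when $R\cap S=\Delta_X$ and that each connector yields a section of $\alpha$, and the paper's ``proof'' is precisely the observation that these facts, together with the uniqueness of sections of a monomorphism and the recoverability of $p$ from its section, give the result. Your elaboration of the well-definedness of $s_p$ via axiom (1) of Definition~\ref{connector} and the transfer to generalized elements via the Yoneda embedding is a faithful and complete filling-in of the same argument.
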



\section{Mal'tsev categories}

\subsection{Definition and examples}

 As mentioned in the introduction, the definition of a Mal'tsev category is of an undisputable simplicity~\cite{CLP,CPP}:

\begin{definition}
	A category $\EE$ is said to be a \emph{Mal'tsev} one, when it is finitely complete and such that any reflexive relation in $\EE$ is an equivalence relation.
\end{definition}

A typical example of such a category is the category $\mathsf{Gp}$ of groups since it satisfies condition~\ref{ternary term p} of Theorem~\ref{CharacterizationM} with the term $p(x,y,z)=xy^{-1}z$.

The class of examples can be quickly extended thanks to the following straightforward lemma:

\begin{lemma}\label{conser}
	Given a left exact conservative functor $U\colon\EE \to \EE'$ between finitely complete categories, the category $\EE$ is Mal'tsev if $\EE'$ is Mal'tsev.
\end{lemma}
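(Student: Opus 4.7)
The plan is to show that any reflexive relation in $\EE$ is difunctional; by the internal version of Lemma~\ref{reflexive+difunctional} (whose generalization to any finitely complete category is explicitly noted after its proof), this is enough to conclude that every reflexive relation in $\EE$ is an equivalence relation, so that $\EE$ is Mal'tsev.

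First, I would take a reflexive relation $r=(r_1,r_2)\colon R\into X\times X$ in $\EE$ together with its reflexive splitting $\delta\colon X\to R$ (with $r\circ\delta=\Delta_X$). Because $U$ is left exact it preserves monomorphisms, so $U(r)\colon U(R)\into U(X)\times U(X)$ is a relation in $\EE'$, and $U(\delta)$ exhibits it as reflexive. Since $\EE'$ is Mal'tsev, $U(R)$ is then an equivalence relation in $\EE'$, and in particular difunctional.

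Next, I would express the difunctionality of $R$ as a factorization problem. Form the iterated pullback $P$ in $\EE$ representing configurations $(a,b,c,d)\in X^4$ with $aRb$, $cRb$ and $cRd$; this is the limit of a finite diagram built from $R$, $X$, and the legs of $r$. There is a canonical morphism $\pi\colon P\to X\times X$ given on points by $(a,b,c,d)\mapsto(a,d)$, and $R$ is difunctional exactly when $\pi$ factors through $r$. To produce such a factorization, I would pull $r$ back along $\pi$ to obtain a monomorphism $\iota\colon Q\into P$; difunctionality is then equivalent to $\iota$ being an isomorphism.

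Finally, conservativity closes the argument. Since $U$ preserves finite limits, $U(P)$ and $U(\pi)$ agree with the analogous construction for $U(R)$ in $\EE'$, and $U(\iota)$ is the pullback of $U(r)$ along $U(\pi)$. The difunctionality of $U(R)$ provides a factorization of $U(\pi)$ through $U(r)$, which forces $U(\iota)$ to be an isomorphism in $\EE'$. By conservativity of $U$, the monomorphism $\iota$ is already an isomorphism in $\EE$, giving the desired factorization of $\pi$ through $r$ and hence the difunctionality of $R$. There is no real obstacle in the argument; one only has to check that every item involved — reflexivity, the configuration object $P$, and difunctionality as a factorization through a monomorphism — is expressible as finite-limit data, so that it is transported by $U$ and reflected by conservativity.
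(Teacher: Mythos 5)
Your argument is correct and is exactly the ``straightforward'' argument the paper has in mind (it states Lemma~\ref{conser} without proof): reflexivity is preserved by $U$, difunctionality is encoded as the invertibility of the pullback of the mono $r$ along the comparison map out of a finite-limit configuration object, and such invertibility is transported by left exactness and reflected by conservativity, after which the internal form of Lemma~\ref{reflexive+difunctional} finishes the job. No gaps.
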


So, considering the forgetful functors to the category $\mathsf{Ab}$ of abelian groups, the category $\mathsf{Rg}$ of rings and, given a ring $A$, any category of $A$-modules and $A$-algebras are immediately Mal'tsev.

The variety $\mathsf{Mal}$ of Mal'tsev algebras produces a Mal'tsev category according to the Mal'tsev theorem. More generally, it is the case for any Mal'tsev variety $\VV$, considering the left exact conservative forgetful functor: $U\colon\VV \to \mathsf{Mal}$. On the other hand, given any category $\EE$, the functor category $\mathcal F(\EE,\VV)$ is clearly a Mal'tsev category as well. 

The variety $\mathsf{Heyt}$ of Heyting algebras is a Mal'tsev variety~\cite{Smith}.
From that, the dual $\mathsf{Set}^{op}$ of the category of sets, and more generally the dual $\EE^{op}$ of any elementary topos $\EE$ is a Mal'tsev category~\cite{CKP}. It is also the case for the dual of the category of compact Hausdorff spaces~\cite{CKP}. Another source of examples is given by the following straightforward observation:

\begin{lemma}
	The notion of Mal'tsev category is stable under slicing and coslicing.
\end{lemma}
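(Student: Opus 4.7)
The plan is to check the `reflexive-implies-equivalence' axiom separately in the slice $\EE/X$ and the coslice $X/\EE$, by unwinding reflexive relations there into reflexive relations in $\EE$ plus some forced compatibility, and then invoking the hypothesis on $\EE$. Both $\EE/X$ and $X/\EE$ are automatically finitely complete whenever $\EE$ is, so nothing else requires attention.

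For the slice, I would first note that $(A,f)\times(A,f)$ is the pullback $(A\times_X A, f\pi_0)$, and that $A\times_X A$ is canonically a subobject of $A\times A$. Hence a reflexive relation on $(A,f)$ in $\EE/X$ is the same data as a reflexive relation $(R, r_0, r_1, s)$ on $A$ in $\EE$ satisfying the extra condition $fr_0=fr_1$. By hypothesis this is already an equivalence relation in $\EE$, with symmetry $\sigma$ and transitivity $\tau$; the verification that these are morphisms of $\EE/X$ reduces to the routine identities $fr_0\sigma=fr_1=fr_0$ and $fr_0\tau=fr_0 p_1$, both of which coincide with the structure maps of $R$ and $R\times_A R$ over $X$.

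For the coslice, products are now computed as in $\EE$, namely $(A,g)\times(A,g)=(A\times A,(g,g))$. A reflexive relation on $(A,g)$ in $X/\EE$ therefore amounts to a reflexive relation $R$ on $A$ in $\EE$ together with a map $r'\colon X\to R$, but the $X/\EE$-structure of the reflexivity forces $r'=sg$. Mal'tsev-ness of $\EE$ again yields $\sigma$ and $\tau$, and to check they respect the $X$-structure one reduces the identities $\sigma s=s$ and $\tau(sg,sg)=sg$ to equalities after post-composing with $(r_0,r_1)$, using that this is a monomorphism.

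I do not expect any real obstacle. The only care needed is in identifying what a reflexive relation looks like in each setting, since products in slices and coslices take different shapes; once that is in place, the symmetry and transitivity morphisms provided by $\EE$ lift essentially formally. One might hope to invoke Lemma~\ref{conser} through the evident forgetful functors to $\EE$, but these fail to preserve the terminal object and hence are not left exact, which is why the direct argument above is more convenient.
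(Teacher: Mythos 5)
Your argument is correct, and in fact the paper offers no proof at all: it records the statement as a ``straightforward observation'', so your direct verification is exactly the kind of argument being left to the reader. The identification of reflexive relations in $\EE/X$ with reflexive relations $R$ on $A$ in $\EE$ satisfying $fr_0=fr_1$, and in $X/\EE$ with reflexive relations on $A$ whose coslice structure is forced to be $sg$, is the right bookkeeping, and the checks that $\sigma$ and $\tau$ live over (resp.\ under) $X$ go through as you say. One small correction to your closing remark: the forgetful functor $X/\EE\to\EE$ from the \emph{coslice} does preserve the terminal object (the terminal object of $X/\EE$ is $X\to 1$) and in fact creates all finite limits, and it is conservative; so Lemma~\ref{conser} applies directly and disposes of the coslice case with no computation. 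It is only the slice forgetful functor $\EE/X\to\EE$ that fails to be left exact (it preserves pullbacks but sends the terminal object $1_X$ to $X$), which is why the hands-on argument is genuinely needed there --- alternatively, one can observe that the notions of reflexive relation and of equivalence relation involve only pullbacks and monomorphisms, which the slice forgetful functor preserves and reflects. This inaccuracy does not affect the validity of your proof, since you verify both cases directly anyway.
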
 

This means that, when $\EE$ is a  Mal'tsev category, so are the slice categories $\EE/Y$ and the coslice categories $Y/\EE$, for any object $Y\in \EE$. Accordingly, any fibre $\mathsf{Pt}_Y\EE$ of the fibration of points is a Mal'tsev category (see the definition before Theorem~\ref{theounit} below).

\subsection{Yoneda embedding for internal structures}

Considering internal algebras of a Mal'tsev variety, one gets a third important source of examples. Any object $X$ in a category $\mathbb E$ produces a functor: $$Y(X)=Hom_{\mathbb E}(-,X)\colon\mathbb E^{op}\to \mathsf{Set}$$ This, in turn, produces a fully faithful functor:
$\;\;\mathbb E \to \mathcal F(\mathbb E^{op},\mathsf{Set})\;\;$
which is called the \emph{Yoneda embedding}. It is left exact when, in addition, the category $\mathbb E$ is finitely complete, the left exactness property being a synthetic translation of the universal properties of the finite limits.

Given any algebraic theory $\mathbb T$ defined by any number of operations with finite arity and any number of axioms, we shall denote by $\VV(\mathbb T)$ the associated variety, and by $\mathbb T(\EE)$ the category of the internal $\mathbb T$-algebras in the finitely complete category $\EE$. Then there is a canonical factorization $Y_{\mathbb T}$ making the following diagram commute:
\[\xymatrix@C=3pc@R=3pc{  \mathbb T(\EE) \ar@{-->}[rr]^{Y_{\mathbb T}} \ar[d]_{\mathcal U_{\mathbb T}^{\EE}} && \mathcal F(\mathbb E^{op},\VV(\mathbb T)) \ar[d]^{\mathcal F( \mathbb E^{op}, \mathcal U_{\mathbb T})} \\
	\mathbb E \ar[rr]_Y && \mathcal F(\mathbb E^{op},\mathsf{Set})}\]
where $\mathcal U_{\mathbb T}$ and $\mathcal U_{\mathbb T}^{\EE}$ are the induced forgetful functors, which are both left exact and conservative. Accordingly they are faithful and reflect finite limits as well.

\begin{proposition}\label{Ystruct1}
	The functor $Y_{\mathbb T}$ is fully faithful and left exact.
\end{proposition}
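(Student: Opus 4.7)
The plan is to leverage the commutative square given just before the proposition, in which $Y$, $\mathcal{U}_{\mathbb T}^{\EE}$ and $\mathcal{U}_{\mathbb T}$ all have known properties: $Y$ is fully faithful and left exact (the ordinary Yoneda embedding on a finitely complete category), while $\mathcal{U}_{\mathbb T}$ and $\mathcal{U}_{\mathbb T}^{\EE}$ are left exact, faithful and reflect finite limits. Since limits in $\mathcal{F}(\EE^{op},\VV(\mathbb T))$ are computed pointwise, the functor $\mathcal{F}(\EE^{op},\mathcal{U}_{\mathbb T})$ inherits faithfulness, left exactness and the reflection of finite limits from $\mathcal{U}_{\mathbb T}$. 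Faithfulness of $Y_{\mathbb T}$ is then immediate: if $Y_{\mathbb T}(f)=Y_{\mathbb T}(g)$, applying $\mathcal{F}(\EE^{op},\mathcal{U}_{\mathbb T})$ yields $Y\mathcal{U}_{\mathbb T}^{\EE}(f)=Y\mathcal{U}_{\mathbb T}^{\EE}(g)$, and faithfulness of $Y$ followed by that of $\mathcal{U}_{\mathbb T}^{\EE}$ gives $f=g$.

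For fullness, I would consider a natural transformation $\alpha\colon Y_{\mathbb T}(X)\to Y_{\mathbb T}(Z)$. Its image under $\mathcal{F}(\EE^{op},\mathcal{U}_{\mathbb T})$ is a natural transformation $Y(X)\to Y(Z)$ in $\mathcal{F}(\EE^{op},\mathsf{Set})$, hence by the classical Yoneda lemma it equals $Y(f)$ for a unique $f\colon X\to Z$ in $\EE$, with each component acting as $\alpha_E = f\circ -$. What remains is to check that $f$ underlies a morphism of internal $\mathbb T$-algebras, i.e.\ that $f\circ\omega_X=\omega_Z\circ f^{n}$ for every $n$-ary operation $\omega$ of $\mathbb T$. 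The key observation is that, for the $\mathbb T$-algebra structure on $Y_{\mathbb T}(X)(X^n)=\mathrm{Hom}(X^n,X)$, the internal interpretation $\omega_X\colon X^n\to X$ is precisely $\omega(\pi_1,\dots,\pi_n)$, where $\pi_i$ are the product projections; since $\alpha_{X^n}$ preserves $\omega$, evaluating gives $f\circ\omega_X=\omega_Z\circ\langle f\pi_1,\dots,f\pi_n\rangle=\omega_Z\circ f^{n}$. Thus $f$ lifts uniquely to a morphism $\tilde f$ in $\mathbb T(\EE)$, and the already established faithfulness of $\mathcal{F}(\EE^{op},\mathcal{U}_{\mathbb T})\circ Y_{\mathbb T}$ forces $Y_{\mathbb T}(\tilde f)=\alpha$.

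For left exactness, commutativity of the square yields $\mathcal{F}(\EE^{op},\mathcal{U}_{\mathbb T})\circ Y_{\mathbb T}=Y\circ\mathcal{U}_{\mathbb T}^{\EE}$, and the right-hand side is a composite of left exact functors, hence left exact. Since $\mathcal{F}(\EE^{op},\mathcal{U}_{\mathbb T})$ reflects finite limits, as noted, $Y_{\mathbb T}$ must itself preserve them. The principal obstacle in the whole argument is the fullness step: one has to recognise $\omega_X$ at the Yoneda level as $\omega(\pi_1,\dots,\pi_n)$ in the $\mathbb T$-algebra $\mathrm{Hom}(X^n,X)$, and then convert pointwise preservation of operations into the internal homomorphism equation; everything else is a diagram chase that rests on the standard properties of $Y$, $\mathcal{U}_{\mathbb T}$ and $\mathcal{U}_{\mathbb T}^{\EE}$.
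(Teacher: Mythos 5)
Your proof is correct and follows essentially the same route as the paper: faithfulness and left exactness are deduced from the commutative square and the fact that $\mathcal U_{\mathbb T}$ (hence $\mathcal F(\EE^{op},\mathcal U_{\mathbb T})$) is faithful, left exact and reflects finite limits, while fullness is obtained by producing $f$ via the ordinary Yoneda embedding and then verifying it is a homomorphism of internal $\mathbb T$-algebras. The only difference is that you make explicit the step the paper leaves as ``checked via the faithfulness of the Yoneda embedding'', by evaluating at the component $X^n$ on the element $\omega_X=\omega(\pi_1,\dots,\pi_n)$, which is a correct and welcome elaboration of the same argument.
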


\begin{proof}
The faithfulness is straightforward.  Now, given a pair $(M,M')$ of $\mathbb T$-algebras in $\mathbb E$, any natural transformation $\theta\colon Hom_{\mathbb E}(-,M)\to  Hom_{\mathbb E}(-,M')$ in $\mathcal F(\mathbb E^{op},\VV(\mathbb T))$ has an underlying natural transformation $\theta\colon  Hom_{\mathbb E}(-,M)\to  Hom_{\mathbb E}(-,M')$ in $\mathcal F(\mathbb E^{op},\mathsf{Set})$. From it, the Yoneda embedding $Y$ produces a map $f\colon M\to M'$ in $\mathbb E$; it remains to check that it is a homomorphism of $\mathbb T$-algebras, i.e. that some diagrams commute in $\mathbb E$. This, again, can be checked via the faithfulness of the Yoneda embedding.

The left exactness of the embedding $Y_{\mathbb T}$ is a consequence of the fact that the three other functors are left exact and that $U_{\mathbb T}$ reflects finite limits.
\end{proof}

So the functor  $Y_{\mathbb T}$ is left exact and conservative, and according to Lemma~\ref{conser} we get:

\begin{proposition}
	Given any finitely complete category $\EE$, if $\VV(\mathbb T)$ is a Mal'tsev variety then the category $\mathbb T(\EE)$ is Mal'tsev.
\end{proposition}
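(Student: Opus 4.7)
The plan is to invoke the two preparatory results established just before the statement. By Proposition~\ref{Ystruct1}, the canonical factorization
$$Y_{\mathbb T}\colon \mathbb T(\EE) \to \mathcal F(\mathbb E^{op},\VV(\mathbb T))$$
is left exact and fully faithful; in particular it is conservative (a fully faithful functor reflects isomorphisms). By Lemma~\ref{conser}, it therefore suffices to prove that the codomain $\mathcal F(\mathbb E^{op},\VV(\mathbb T))$ is itself a Mal'tsev category.

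For this, I would appeal to the observation made earlier that for any category $\mathcal C$ and any Mal'tsev variety $\VV$, the functor category $\mathcal F(\mathcal C,\VV)$ is Mal'tsev. The justification is that finite limits in $\mathcal F(\mathcal C,\VV)$ are computed pointwise, so a reflexive relation on a functor $F$ is given by a family of reflexive relations on the values $F(C)$ that is natural in $C$; since each such component is an equivalence relation in $\VV$ (by assumption), the whole family is an equivalence relation in $\mathcal F(\mathcal C,\VV)$. Taking $\mathcal C=\mathbb E^{op}$ gives exactly what we need.

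Combining these two steps yields the result: $\mathcal F(\mathbb E^{op},\VV(\mathbb T))$ is Mal'tsev, and since $Y_{\mathbb T}$ is left exact and conservative, so is $\mathbb T(\EE)$. The argument is essentially a transport along an embedding, and the only subtle point is making sure that $Y_{\mathbb T}$ lands in functors valued in $\VV(\mathbb T)$ rather than merely in $\mathsf{Set}$; but this is precisely the content of Proposition~\ref{Ystruct1}, so no extra work is required. I do not foresee any genuine obstacle, since the structural results have been set up exactly to make this deduction immediate.
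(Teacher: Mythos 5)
Your proof is correct and follows exactly the route the paper takes: apply Proposition~\ref{Ystruct1} to see that $Y_{\mathbb T}$ is left exact and conservative (being fully faithful), observe that $\mathcal F(\mathbb E^{op},\VV(\mathbb T))$ is Mal'tsev because limits there are computed pointwise, and conclude by Lemma~\ref{conser}. The only difference is that you spell out the pointwise argument for the functor category, which the paper simply asserts as clear.
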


\section{Characterizations}

An immediate aspect of the richness of the notion of Mal'tsev category is that there are at least three types of characterization of very distinct nature.

\subsection{Unital characterization}

For the first one we need the following:

\begin{definition}\cite{Bourn1}
	A category  $\mathbb E$ is said to be \emph{unital} when it is pointed, finitely complete and such that, for any pair $(X,Y)$ of objects in $\mathbb E$, the following pair of monomorphisms:
	\[\xymatrix@C=2,5pc@R=2pc{{X\;}  \ar@{>->}[rr]^-{j_0^X=(1_X,0)} && X\times Y && {\; Y} \ar@{>->}[ll]_-{j_1^Y=(0,1_Y)} }\]
	is jointly extremally epic. A category  $\mathbb E$ is said to be \emph{strongly unital} when it is pointed, finitely complete and such that any reflexive relation $R$ on an object $X$ which is right punctual (i.e. containing $j_1^X$) is the largest equivalence relation on~$X$.
\end{definition}

The previous terminology is justified by the following:

\begin{lemma}\label{lemma strongly unital is unital}
	Any strongly unital category is unital.	
\end{lemma}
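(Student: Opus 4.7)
The plan is to take a mono $m\colon M\into X\times Y$ through which both $j_0^X$ and $j_1^Y$ factor, say via $s\colon X\to M$ and $t\colon Y\to M$, and to show directly that $m$ is an isomorphism. Writing $p_i=\pi_i\circ m$ for the two components of $m$, we have $p_1s=1_X$, $p_2t=1_Y$ and $p_2s=0=p_1t$. The goal is to feed a cleverly chosen reflexive, right-punctual relation to the strongly unital hypothesis. Naive attempts to put such a relation on $X$, $Y$, or $X\times Y$ are uninformative (reflexivity already forces $m$ to be iso, or right-punctuality is vacuous), so the creative step is to place the relation on $M$ itself.

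Concretely, I would define $R\into M\times M$ as the pullback of $m$ along the map
\[\phi=(p_1\pi_1,\,p_2\pi_2)\colon M\times M\to X\times Y.\]
Reflexivity is automatic, because $\phi\circ\Delta_M=(p_1,p_2)=m$ factors through $m$ via the identity of $M$. Right-punctuality uses exactly the hypothesis on $t$: the composite $\phi\circ(0,1_M)\colon M\to X\times Y$ equals $j_1^Y\circ p_2=m\circ t\circ p_2$, which factors through $m$ via $t\circ p_2$. Thus $R$ is a reflexive right-punctual relation on $M$.

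The strongly unital hypothesis then forces $R=\nabla_M=M\times M$; equivalently, $\phi$ itself factors through $m$, say $m\circ\bar p=\phi$ for some $\bar p\colon M\times M\to M$. Precomposing with $s\times t\colon X\times Y\to M\times M$ yields
\[
m\circ\bar p\circ(s\times t)=\phi\circ(s\times t)=(p_1 s\,\pi_X,\,p_2 t\,\pi_Y)=(\pi_X,\pi_Y)=1_{X\times Y},
\]
so $\bar p\circ(s\times t)$ is a section of $m$; together with $m$ being mono, this makes $m$ an isomorphism. Therefore $(j_0^X,j_1^Y)$ is jointly extremally epic for every pair $X, Y$, proving that $\EE$ is unital.

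The main obstacle is precisely the choice of object on which to build the relation: once one recognises that the asymmetric pairing $\phi=(p_1\pi_1,p_2\pi_2)$ renders reflexivity tautological and channels right-punctuality exactly through the factorization $t$, everything else is a formal manipulation of universal properties.
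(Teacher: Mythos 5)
Your proof is correct and is essentially the paper's own argument: your relation $R=\phi^{-1}(M)\into M\times M$ with $\phi=(p_1\pi_1,p_2\pi_2)$ is exactly the paper's relation $T$ on the subobject $U$ (defined by $(xUy)T(x'Uy')$ iff $xUy'$), and reflexivity and right-punctuality are verified in the same way. The only (cosmetic) difference is in the last step, where the paper reads off that $m$ is invertible from a pullback square along $u_0\times u_1=s\times t$, while you exhibit the section $\bar p\circ(s\times t)$ of the monomorphism $m$ explicitly.
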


\begin{proof}
Consider the  following diagram
\[\xymatrix@C=3pc@R=2,5pc{ 
	{U\;} \ar@{ >->}[d]_{(f,g)}\ar@{>->}[r]^{} & T \ar@{ >->}[d]_{t}\ar[r]^{\psi} & {\; U}\ar@{ >->}[d]^{(f,g)}\\ 
	{X\times Y\;} \ar@<1ex>@{>->}[r]^{u_0\times u_1} \ar@<-1ex>[rr]_{=}	& U\times U \ar@<1ex>[r]^{f\times g} & X\times Y
}\]
where $(f,g)\colon U\into X\times Y$ is a relation containing $j_0^X$ and $j_1^Y$ through  maps $u_0$ and $u_1$ and where the right hand side square is a pullback. 
It determines a unique factorization $U\into T$ making the left hand side square a pullback as well. The relation $T$ on $U$ is defined by $(xUy)T(x'Uy')$ if and only if $xUy'$. Accordingly it is a reflexive relation. Now the map $u_1$ insures that, for all $xUy$, we have $(0U0)T(xUy)$; namely the relation $T$ is right punctual. So that we have $T=\nabla_U$ and $t$ is an isomorphism. According to the left hand side pullback, the map $(f,g)$ is itself an isomorphism, and $\EE$ is unital.
\end{proof}

The categories $\mathsf{Mon}$, $\mathsf{CoM}$ and $S\mathsf{Rg}$ of monoids, commutative monoids and semi-rings are unital categories; they are not Mal'tsev categories since, with the order $\mathbb N$ of the natural numbers, they have a reflexive relation which is not an equivalence relation. When $\mathbb E$ is finitely complete, the categories $\mathsf{Mon}(\mathbb E)$, $\mathsf{CoM}(\mathbb E)$, and $\mathsf{SRg}(\EE)$ of internal monoids, internal commutative monoids and internal semi-rings are so. This is the case in particular of the category $\mathsf{Mon}(\mathsf{Top})$ of topological monoids.
More generally, a pointed variety $\VV$ is unital if and only if it is a Jonsson-Tarski variety, see~\cite{BB}. 

\begin{lemma}
	Any pointed Mal'tsev category is strongly unital.
\end{lemma}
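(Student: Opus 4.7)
The plan is to combine the Mal'tsev hypothesis with right punctuality. Given a reflexive relation $R\hookrightarrow X\times X$ in $\mathbb{E}$ containing $j_1^X=(0,1_X)$, the Mal'tsev axiom immediately promotes $R$ to an equivalence relation, so it comes equipped with internal symmetry and transitivity arrows. The goal then reduces to showing that $R=\nabla_X$, i.e.\ that every generalized element $(x,y)\colon T\to X\times X$ factors through the mono $R\hookrightarrow X\times X$.

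I would argue via the Yoneda embedding, exactly as the authors suggest for the internalization of Lemma~\ref{reflexive+difunctional}. Right punctuality yields $0\,R\,y$ for every $y\colon T\to X$; symmetry then gives $x\,R\,0$ for every $x\colon T\to X$; and a single application of transitivity to the composable chain $x\,R\,0\,R\,y$ delivers $x\,R\,y$. Thus every pair $(x,y)$ lies in $R$, forcing $R=\nabla_X$, which is precisely strong unitality (pointedness and finite completeness being inherited directly from the hypotheses).

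I do not anticipate a real obstacle: the set-theoretic argument transfers verbatim to the finitely complete setting through Yoneda. The only bookkeeping is to realize the symmetry and transitivity of $R$ as bona fide morphisms $\sigma\colon R\to R$ and $\tau\colon R\times_X R\to R$ in $\mathbb{E}$ and then to compose them with the factorization of $j_1^X$ through $R$ in order to produce the required factorization of $(1_X,1_X)\circ \pi\colon X\times X\to X\times X$ (where $\pi$ is the obvious projection) through the mono $R\hookrightarrow X\times X$.
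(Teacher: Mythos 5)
Your argument is correct and is essentially the paper's own proof, which simply observes that the Mal'tsev condition turns the right punctual reflexive relation $R$ into a right punctual equivalence relation, whence $R=\nabla_X$ by exactly the symmetry-plus-transitivity chain $x\,R\,0\,R\,y$ that you spell out via generalized elements. (Only your closing notational remark is off: the map to be factored through $R\rightarrowtail X\times X$ is the identity of $X\times X$, i.e.\ the generic pair $(p_0,p_1)$, not $(1_X,1_X)\circ\pi$; but your Yoneda argument with arbitrary $(x,y)\colon T\to X\times X$ already establishes precisely this.)
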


\begin{proof}
Given any right punctual reflexive relation $R$ on $X$, it is a right punctual equivalence relation. It follows that $R=\nabla_X$.
\end{proof}

Accordingly the categories $\mathsf{Gp}$, $\mathsf{Ab}$ and $\mathsf{Rg}$ of groups, abelian groups and rings  are strongly unital categories. When $\mathbb E$ is finitely complete, the categories $\mathsf{Gp}(\mathbb E)$, $\mathsf{Ab}(\mathbb E)$, and $\mathsf{Rg}(\EE)$ of internal groups, internal abelian groups and internal rings are so. This is the case in particular of the category $\mathsf{Gp}(\mathsf{Top})$ of topological groups.
More generally, a pointed variety of algebras $\VV$ is strongly unital if and only if it has a unique constant $0$ and a ternary operation $p$ satisfying $p(x,x,y)=y$ and $p(x,0,0)=x$, see~\cite{BB}.
Again we have:

\begin{lemma}
	Given any left exact conservative functor $U\colon \EE \to \EE'$ between finitely complete categories, the category $\EE$ is (resp. strongly) unital as soon as so is $\EE'$.
\end{lemma}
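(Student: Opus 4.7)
The plan is to imitate Lemma~\ref{conser}: both the unital and strongly unital properties of $\EE'$ can be phrased as the demand that a certain monomorphism in $\EE'$, constructed naturally from an object of $\EE'$, be an isomorphism, and left exact conservative functors reflect exactly this kind of data. As a preliminary, I would record what $U$ preserves. Being left exact, $U$ preserves finite products, monomorphisms and the terminal object. Assuming $\EE$ pointed (as implicit in the notion of unital), one has $U(1_\EE)=1_{\EE'}$, and since every zero morphism in $\EE$ factors through $1_\EE$ this forces $U$ to preserve zero morphisms. Consequently $U$ sends $j_0^X=(1_X,0)$ and $j_1^Y=(0,1_Y)$ in $\EE$ to the analogous canonical monomorphisms $j_0^{U(X)}$ and $j_1^{U(Y)}$ in $\EE'$, and $\Delta_X$ to $\Delta_{U(X)}$.

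For the unital case, I would take any monomorphism $m\colon T \into X \times Y$ in $\EE$ through which $j_0^X$ and $j_1^Y$ both factor, and apply $U$ to obtain a monomorphism $U(m)\colon U(T) \into U(X)\times U(Y)$ in $\EE'$ through which $j_0^{U(X)}$ and $j_1^{U(Y)}$ factor. The unitality of $\EE'$ forces $U(m)$ to be an isomorphism, and the conservativity of $U$ upgrades this to $m$ being an isomorphism in $\EE$, establishing that $(j_0^X,j_1^Y)$ is jointly extremally epic and thus that $\EE$ is unital.

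For the strongly unital case the argument is entirely parallel: a right punctual reflexive relation $\iota\colon R \into X \times X$ in $\EE$ is sent by $U$ to a right punctual reflexive relation $U(\iota)\colon U(R) \into U(X)\times U(X)$ in $\EE'$, using the preservations noted above to transport the reflexivity datum $X \to R$ and the right punctuality datum $X \to R$. Strong unitality of $\EE'$ yields $U(\iota)\cong \nabla_{U(X)}$, and conservativity then forces $\iota$ to be an isomorphism as well, so $R=\nabla_X$ as required. The only genuinely delicate bit of work is the preliminary bookkeeping on preservation of zero morphisms and the canonical injections; once it is in place, both implications reduce to the same reflection-of-isomorphisms mechanism as in Lemma~\ref{conser}.
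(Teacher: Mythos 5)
Your argument is correct and is precisely the routine reflection argument that the paper leaves implicit (it states this lemma without proof, presenting it as a variant of Lemma~\ref{conser}): a left exact functor preserving the zero object carries the punctual monomorphisms $j_0^X$, $j_1^Y$ and any reflexive right punctual relation in $\EE$ to the corresponding data in $\EE'$, and conservativity then reflects the isomorphisms that unitality, respectively strong unitality, demand. The only caveat, which you rightly flag, is that pointedness of $\EE$ is not literally among the hypotheses and must be read as implicitly assumed (as it is in all of the paper's applications, where $\EE$ is a category of internal algebras over a pointed theory); granting that, the preservation of zero morphisms follows from $U(1_{\EE})\cong 1_{\EE'}$ exactly as you say, and both cases reduce to the same mechanism as Lemma~\ref{conser}.
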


We denote by $\mathsf{Pt}(\mathbb E)$ the category whose objects are the split epimorphisms equipped with a given section and whose maps are the pairs of morphisms commuting with the split epimorphisms and the given sections:
\[\xymatrix@C=2pc@R=2pc{ X  \ar@<1.2ex>[d]^{f}\ar[r]^x & X'\ar@<1.2ex>[d]^{f'}\\
	Y\ar[u]^{s} \ar[r]_y & Y'\ar[u]^{s'}}\]
We also denote by $\pi_{\mathbb E}\colon  \mathsf{Pt}(\mathbb E)\to \mathbb E$ the functor associating with any split epimorphism $(f,s)$ its codomain $Y$. It is a fibration whose cartesian maps are the pullbacks of split epimorphisms. It is called the \emph{fibration of points} and the fibre above $Y$ is denoted by $\mathsf{Pt}_Y\mathbb E$, see~\cite{Bourn6}. We are now ready for the first characterization theorem:

\begin{theorem}\label{theounit}
	Given a finitely complete category $\EE$, the following conditions are equivalent:\\
	1) any (pointed) fibre $\mathsf{Pt}_Y\EE$ of the fibration of points is unital;\\
	2) any relation $(f,g)\colon R\rightarrowtail X\times Y$ in $\EE$ is difunctional;\\
	3) $\EE$ is a Mal'tsev category;\\
	4) any fibre $\mathsf{Pt}_Y\EE$ is strongly unital.
\end{theorem}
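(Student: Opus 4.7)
I would prove the equivalences by establishing the cycle $4)\Rightarrow 1)\Rightarrow 2)\Rightarrow 3)\Rightarrow 4)$, the substantive content lying in $1)\Rightarrow 2)$. The implication $4)\Rightarrow 1)$ is immediate from Lemma~\ref{lemma strongly unital is unital} applied fibrewise. For $2)\Rightarrow 3)$, a reflexive relation $R$ is difunctional by~$2)$ and hence an equivalence relation by Lemma~\ref{reflexive+difunctional}, transposed to the internal setting via the Yoneda embedding as noted after its proof. For $3)\Rightarrow 4)$, I would identify the fibre $\mathsf{Pt}_Y\EE$ with the coslice $1_Y/(\EE/Y)$ (an object of which is a section of a split epi above~$Y$); by the stability of the Mal'tsev property under slicing and coslicing, this fibre is a pointed Mal'tsev category, and the lemma just before Theorem~\ref{theounit} (``any pointed Mal'tsev category is strongly unital'') concludes the argument.

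For the substantive implication $1)\Rightarrow 2)$, given a relation $(f,g)\colon R\rightarrowtail X\times Y$, Example~\ref{ddif} reduces the difunctionality of $R$ to the existence of a connector between the kernel pair equivalence relations $\Eq[f]$ and $\Eq[g]$ on~$R$. Joint monicity of $(f,g)$ gives $\Eq[f]\cap\Eq[g]=\Delta_R$, so Proposition~\ref{oneconnector} yields uniqueness a priori. For existence I would work in the fibre $\mathsf{Pt}_R\EE$: equipping $\Eq[f]$ and $\Eq[g]$ each with one of its projections and the diagonal section realises them as objects of this fibre, and their fibrewise product is exactly the domain $\Eq[f]\times_R\Eq[g]$ of a candidate connector. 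Unitality of $\mathsf{Pt}_R\EE$ supplies the joint extremal epimorphism property of the two canonical inclusions of $\Eq[f]$ and $\Eq[g]$ into this pullback; against these, the comparison map~(\ref{canonical}) from $\Eq[f]\square\Eq[g]$ (already monic since $\Eq[f]\cap\Eq[g]=\Delta_R$) can be recognised as an isomorphism, and after projecting to an appropriate component of $\Eq[f]\square\Eq[g]$ one reads off the connector.

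The main obstacle is producing the connector in a diagrammatic, element-free way from unitality alone. Concretely, one has to exhibit two morphisms out of $\Eq[f]\times_R\Eq[g]$ whose restrictions along the canonical inclusions of $\Eq[f]$ and $\Eq[g]$ coincide, invoke the joint extremal epimorphism property to conclude their equality, and then verify the axioms of Definition~\ref{connector}; the element-theoretic intuition ``pick the fourth corner of a square in $\Eq[f]\square\Eq[g]$'' has to be translated into a diagrammatic identity, which is the most delicate step of the proof.
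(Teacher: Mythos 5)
Your proposal is correct and follows essentially the same route as the paper: the same cycle of implications, with $2)\Rightarrow 3)$ via Lemma~\ref{reflexive+difunctional}, $3)\Rightarrow 4)$ via slicing/coslicing and the lemma that pointed Mal'tsev categories are strongly unital, $4)\Rightarrow 1)$ via Lemma~\ref{lemma strongly unital is unital}, and the key step $1)\Rightarrow 2)$ handled exactly as in the paper by showing the comparison $\Eq[f]\square\Eq[g]\to \Eq[f]\times_R\Eq[g]$ is monic (from $\Eq[f]\cap\Eq[g]=\Delta_R$) and then an isomorphism by joint extremal epicness in the unital fibre, whence the fourth-corner projection yields the connector and Example~\ref{ddif} concludes. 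The diagrammatic verification you flag as delicate is likewise left implicit in the paper's own proof.
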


\begin{proof}
1) $\Rightarrow$ 2): Suppose that any  fibre $\mathsf{Pt}_Y\EE$ is unital. First, let us focus our attention on the following observation: given a pair $(R,S)$ of reflexive relations on an object $X$ such that $R\cap S=\Delta_X$, the commutative square vertically indexed by $0$ and horizontally indexed by $1$ in the following diagram is a pullback:
$$ \xymatrix@=30pt{
	R \square S \ar@<-1,ex>[d]_{p_0^R}\ar@<+1,ex>[d]^{p_1^R} \ar@<-1,ex>[r]_{p_0^S}\ar@<+1,ex>[r]^{p_1^S}
	& S \ar@<-1,ex>[d]_{d_0^S}\ar@<+1,ex>[d]^{d_1^S} \ar[l]\\
	R \ar@<-1,ex>[r]_{d_0^R} \ar@<+1,ex>[r]^{d_1^R} \ar[u]_{} & X
	\ar[u]_{} \ar[l]
}
$$
Indeed, by $R\cap S=\Delta_X$, we know that the factorization $R \square S \to R\times_XS$ is a monomorphism. Since $\mathsf{Pt}_X\EE$ is a unital category, it is an isomorphism in presence of the left hand side vertical section and of the upper horizontal one. Then the map $d_1^S\cdot p_0^S\colon R\square S \to X$ produces a connector.
Now  let be given any relation $(f,g)\colon R\to X\times Y$ in $\EE$.
By $\Eq[f] \cap \Eq[g]=\Delta_R$, we get a connector and the relation $R$ is difunctional, by Example~\ref{ddif}.\\
2) $\Rightarrow$ 3): Follows from Lemma~\ref{reflexive+difunctional}.\\
3) $\Rightarrow$ 4): When $\EE$ is a Mal'tsev category, we noticed that so is any fibre $\mathsf{Pt}_Y\EE$, which is consequently strongly unital.\\
4) $\Rightarrow$ 1): Follows from Lemma~\ref{lemma strongly unital is unital}.
\end{proof}

Thanks to Theorem~1.2.12 in~\cite{BB}, the point~2) gives rise to the following:

\begin{corollary}\label{regpush1}
	A finitely complete category $\EE$ is Mal'tsev if and only if any commutative square of split epimorphisms  (with $yf'=fx$, $xs'=sy$, $s's_y=s_xs$ and $f's_x=s_yf$)
	\[\xymatrix@C=2,5pc@R=2pc{
		X' \ar@<-1ex>@{->>}[d]_{f'} \ar@{->>}[r]^{x} 
		& X \ar@{->>}[d]_{f} \ar@<1ex>@{ >->}[l]^{s_x}\\
		Y' \ar@{ >->}[u]_{s'} \ar@{->>}[r]^{y} & Y \ar@<-1ex>@{ >->}[u]_{s} \ar@{ >->}@<1ex>[l]^{s_y} }\]
	is \emph{a regular pushout}, namely such that the factorization from $X'$ to the pullback of $f$ along $y$ is an extremal epimorphism.
\end{corollary}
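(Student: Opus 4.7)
The plan is to use the equivalence between the Mal'tsev property and the unitality of every fibre of the fibration of points, established in Theorem~\ref{theounit}, combined with Theorem~1.2.12 of~\cite{BB}, which in a pointed finitely complete category characterises the unital property via the extremal-epimorphism character of the canonical factorization to the product, in the presence of compatible split epimorphism sections.

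Given a commutative square of split epimorphisms as in the statement, I first form the pullback $P = X \times_Y Y'$ of $f$ along $y$, with projections $\pi_X \colon P \to X$ and $\bar f \colon P \to Y'$. Both are split epimorphisms: $\pi_X$ has section $(1_X, s_y f) \colon X \to P$, while $\bar f$ has section $(sy, 1_{Y'}) \colon Y' \to P$. The object $X'$ becomes an object of the fibre $\mathsf{Pt}_Y \EE$ via $y f' \colon X' \to Y$ with section $s' s_y$, and the pair $(x, f')$ induces a factorization $\phi \colon X' \to P$ in $\mathsf{Pt}_Y \EE$. Since the forgetful functor $\mathsf{Pt}_Y \EE \to \EE$ reflects extremal epimorphisms, the square is a regular pushout in $\EE$ if and only if $\phi$ is extremal epi in the fibre $\mathsf{Pt}_Y \EE$.

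In $\mathsf{Pt}_Y \EE$ (a pointed finitely complete category with zero $(Y, 1_Y, 1_Y)$), the product of $X$ and $Y'$ regarded as split epis over $Y$ is exactly $P$, and the section-compatibility equations $s' s_y = s_x s$ and $f' s_x = s_y f$ ensure that $s_x \colon X \to X'$ and $s' \colon Y' \to X'$ are morphisms in $\mathsf{Pt}_Y \EE$, splitting $x$ and $f'$ respectively in the fibre. This is exactly the configuration to which Theorem~1.2.12 of~\cite{BB} applies, yielding that $\phi$ is extremal epi for every such square if and only if $\mathsf{Pt}_Y \EE$ is unital. By Theorem~\ref{theounit}, this holds for all $Y$ if and only if $\EE$ is Mal'tsev, which establishes the corollary.

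The main delicacy is checking that the four section-compatibility equations in the statement precisely correspond to the data of a cooperating pair of split epis in the pointed fibre $\mathsf{Pt}_Y \EE$, and that the extremal-epi character of $\phi$ translates back into the regular-pushout condition for the original square in $\EE$; once this bookkeeping is in place, both implications follow directly from Theorem~\ref{theounit} and Theorem~1.2.12 of~\cite{BB}.
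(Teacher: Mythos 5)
Your proof is correct and follows essentially the same route as the paper, which likewise obtains the corollary by combining Theorem~\ref{theounit} with Theorem~1.2.12 of~\cite{BB}: the four compatibility equations make the square exactly a punctual span in the pointed fibre $\mathsf{Pt}_Y\EE$, and the extremal-epi comparison to the product there is the unital characterization. The only cosmetic differences are that you enter Theorem~\ref{theounit} through condition~1) (unital fibres) rather than the condition~2) named in the text, and that you spell out the fibrewise bookkeeping (including the preservation and reflection of extremal epimorphisms by the forgetful functor $\mathsf{Pt}_Y\EE\to\EE$) that the paper leaves implicit.
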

\begin{remark}
As observed by Z. Janelidze \cite{janelidzeZ}, one can add one more equivalent condition to Theorem~\ref{theounit}. Recall that a \emph{subtractive category} is a finitely complete pointed category for which every left punctual reflexive relation is right punctual. A finitely complete category $\EE$ is a Mal'tsev category if and only if any fibre $\mathsf{Pt}_Y\EE$ is subtractive.
\end{remark}
\subsection{Centralization of equivalence relations}

 In this section we are going to show how this first characterization exemplifies that the Mal'tsev context is the right conceptual  one to deal with the notion of centrality of equivalence relations.

The major interest of unital categories is that it allows one to define an \emph{intrinsic notion of commutation} of morphisms. When $\mathbb E$ is a  unital category, the pair $(j_0^X,j_1^Y)$
is jointly epic; accordingly, in the following diagram, there is at most one arrow $\phi$ making the following triangles commute:
\[\xymatrix@C=2,5pc@R=2pc{ 
	{X\;} \ar[dr]_f\ar@{>->}[r]^-{j_0^X} & X \times Y \ar@{.>}[d]^{\phi} & {\; Y}\ar@{>->}[l]_-{j_1^Y}\ar[dl]^{f'}\\
	& Z\\
}\]
and the existence of such a factorization becomes a property.

\begin{definition} \cite{Bourn4, Huq}
	Let $\mathbb E$ be a unital category. We say that a pair $(f,f')$ of morphisms with common codomain \emph{commutes} (or \emph{cooperates}) when there is such a factorization map $\phi$ which is called the \emph{cooperator} of the pair.\\
	We say that the map $f\colon X\to Y$ is \emph{central} when the pair $(f,1_Y)$ cooperates and that the object $X$ is \emph{commutative} when the pair $(1_X,1_X)$ cooperates.
\end{definition}

We shall denote by $\mathsf{Com}(\EE)$ the full subcategory of commutative objects in~$\EE$. We immediately get:

\begin{proposition}\label{monab}
	Let $\mathbb E$ be a unital category. An object $X$ is commutative if and only if it is endowed with a structure of commutative internal monoid which is necessarily unique. Any morphism between commutative objects is an internal morphism of monoids.
\end{proposition}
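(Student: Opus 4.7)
The plan is to exploit the defining universal property of unital categories — namely the joint extremal epicity, hence joint epicity, of the pair $(j_0^X, j_1^Y)$ into $X \times Y$ — both to build the commutative monoid structure out of the cooperator and to verify its axioms. The easy direction is immediate: if $X$ carries a commutative internal monoid structure with multiplication $m : X \times X \to X$ and unit $0$, the two unit laws $m \circ j_0^X = 1_X$ and $m \circ j_1^X = 1_X$ state precisely that $m$ is a cooperator of the pair $(1_X, 1_X)$, so $X$ is commutative in the sense of the preceding definition.

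For the converse, let $\phi : X \times X \to X$ be a cooperator of $(1_X, 1_X)$. Set $m := \phi$, take the zero morphism as the unit, and observe that the unit axioms of the monoid are the cooperator equations themselves. Commutativity $m \circ \tau = m$, where $\tau : X \times X \to X \times X$ is the twist, follows because $m \circ \tau$ is also a cooperator of $(1_X, 1_X)$ — precomposition with $j_0^X$ and $j_1^X$ merely swaps the two defining conditions — and cooperators are unique in a unital category. For associativity $m \circ (m \times 1_X) = m \circ (1_X \times m)$ on $X^3$, test equality after precomposing with the jointly epic pair $j_0^X : X \to X \times X^2$ and $j_1^{X^2} : X^2 \to X \times X^2$: on the first inclusion both composites collapse to $1_X$ via two uses of the unit law, and on the second they both collapse to $m : X^2 \to X$ via one use each. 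Uniqueness of the monoid structure is then immediate, since its multiplication must be a cooperator of $(1_X, 1_X)$.

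Finally, given a morphism $f : X \to Y$ between commutative objects, preservation of the unit is automatic in a pointed category. The multiplicativity relation $f \circ m_X = m_Y \circ (f \times f)$ is proved by the same recipe: precomposition with the jointly epic pair $j_0^X, j_1^X : X \to X \times X$ reduces both sides to $f$, via a single use of the appropriate unit law at each step. The only step requiring real attention is the associativity verification, where one must view $X^3$ as $X \times X^2$ so that the two-term joint epicity of inclusions — rather than a three-term generalization — is exactly the tool available.
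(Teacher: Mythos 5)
Your proof is correct and is exactly the argument the paper leaves implicit (it states the proposition with ``We immediately get''): the unit laws identify the multiplication with the cooperator of $(1_X,1_X)$, and joint epicity of $(j_0,j_1)$ — applied to the pair $(X,X)$ for commutativity and uniqueness, to $(X,X^2)$ for associativity, and again to $(X,X)$ for the homomorphism property — verifies all the axioms. The one step that genuinely needs care, reading $X^3$ as $X\times X^2$ so that the two-term joint epicity suffices for associativity, is handled correctly.
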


We are going to show that the previous characterization theorem reduces the question of centralization of equivalence relations to a question of commutation in the fibres of the fibration of points. Indeed, in a Mal'tsev category, any equivalence relation $R$ on $X$ is completely determined as the following subobject in the fibre $\mathsf{Pt}_X\EE$:
\[\xymatrix@C=3pc@R=2,5pc{
	{R\;}\ar@<-1ex>[rd]_<<<<{d_0^R} \ar@{>->}[r]^-{(d_0^R,d_1^R)} & X\times X \ar@<-1ex>[d]_{p_0^X}\\
	& X  \ar[u]_{s_0^X} \ar[ul]_>>>>>>>>{s_0^R}}\]
We shall denote it by $\rho_R\colon  \Upsilon_R\into \Upsilon_{\nabla_X}$. First observe that, given any pair $(R,S)$ of equivalence relations on $X$, the product of $\Upsilon_{R^{\circ}}$ and $\Upsilon_{S}$ in this fibre coincides with the pullback introduced in Definition~\ref{connector}:
\[\xymatrix@C=2,5pc@R=2pc{
	R \times_X S\ar@<-1ex>[d]_{p_0^R} \ar[r]_-{p_1^S} 
	& S \ar[d]_{d_0^S} \ar@<-1ex>[l]_-{\sigma_1^S}\\
	R  \ar[u]_{\sigma_0^R} \ar@<-1ex>[r]_{d_1^R} & X \ar@<-1ex>[u]_{s_0^S} \ar[l]_{s_0^R} }\]
Then observe that the pair of subobjects $(\rho_{R^{\circ}},\rho_{S})$ commutes in the unital fibre $\mathsf{Pt}_X\EE$ if and only if there is a map $(d_0^S.p_1^S,p)\colon  R \times_X S \to X\times X$ in $\EE$ such that we get $p\cdot\sigma_0^R=d_0^R$ and $p\cdot\sigma_1^S=d_1^S$, namely the Mal'tsev axioms.  Accordingly there is a unique possible connector $p\colon  R\times_XS \to X $ making the pair $(R,S)$ centralize each other. So, in a Mal'tsev category, centralization of equivalence relations becomes a property; we shall denote it as usual by $[R,S]=0$.

\begin{proposition}\cite{BG,Bourn11}
	Let $\EE$ be a Mal'tsev category, and $(R,S)$ two equivalence relations on $X$. The pair $(R,S)$ centralizes each other if and only if the pair of subobjects $(\rho_{R^{\circ}},\rho_{S})$ commutes in the fibre $\mathsf{Pt}_X\EE$. This implies in particular that a pair $(R,S)$ admits at most one map $p\colon R\times_XS\to X$ satisfying the Mal'tsev axioms and that this map is necessarily a connector.
\end{proposition}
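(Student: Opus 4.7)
The plan is to leverage Theorem~\ref{theounit}, which guarantees that each fibre $\mathsf{Pt}_X\EE$ of the fibration of points is (strongly) unital in a Mal'tsev category. The explicit analysis preceding the statement already translates the commutation of $(\rho_{R^\circ},\rho_S)$ in $\mathsf{Pt}_X\EE$ into the existence of a morphism $\phi=(d_0^S\cdot p_1^S,p)\colon R\times_XS\to X\times X$ in $\EE$ whose second component $p$ satisfies $p\cdot\sigma_0^R=d_0^R$ and $p\cdot\sigma_1^S=d_1^S$, i.e., the partial Mal'tsev axioms. The direction centralisation $\Rightarrow$ commutation is then immediate: any connector automatically satisfies the Mal'tsev axioms and so provides the required cooperator. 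Moreover, the uniqueness of a Mal'tsev-axiom-satisfying $p$ follows at once from the uniqueness of cooperators in a unital category, which is itself a consequence of the joint extremal epicity of the pair $(\sigma_0^R,\sigma_1^S)$.

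For the converse commutation $\Rightarrow$ centralisation, one must upgrade a Mal'tsev-axiom-satisfying $p$ to a full connector. For the identities $xSp(x,y,z)$ and $p(x,y,z)Rz$, my plan is to pull back $S\into X\times X$ along the morphism $(d_0^R\cdot p_0^R,p)\colon R\times_XS\to X\times X$, yielding a subobject $T$ of $R\times_XS$; since $T$ inherits the middle-coordinate projection to $X$, it lies in the fibre $\mathsf{Pt}_X\EE$. A direct check using the two Mal'tsev identities together with the reflexivity of $R$ and $S$ shows that $T$ contains the images of both $\sigma_0^R$ and $\sigma_1^S$, and the joint extremal epicity of $(\sigma_0^R,\sigma_1^S)$ in the unital fibre then forces $T=R\times_XS$, which is exactly the condition $xSp(x,y,z)$. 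The companion identity $p(x,y,z)Rz$ is obtained symmetrically by pulling back $R\into X\times X$ along $(p,d_1^S\cdot p_1^S)$.

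For the left and right associativity axioms, my plan is to recognise, for each identity, the two competing composites as morphisms out of an appropriate iterated pullback of $R$ and $S$ over $X$, and to exhibit both sides as cooperators for a common pair of subobjects in a unital fibre, so that the uniqueness from the first paragraph forces them to coincide. Identifying the right iterated pullback and the right pair of subobjects is the delicate matter, and it is where I expect the main obstacle to lie, since the partial Mal'tsev axioms by themselves do not imply associativity; what saves the day is the additional rigidity supplied by the unital structure of $\mathsf{Pt}_X\EE$. Once axioms~(1) and~(3) are in place, $p$ is a connector, which yields the converse of the iff, and the final assertion that the unique Mal'tsev-axiom-satisfying $p$ is necessarily a connector then follows directly.
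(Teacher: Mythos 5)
Your proposal is correct, but for the converse direction it takes a genuinely different route from the paper's. The paper derives the remaining connector axioms by working in $\EE$ itself: it introduces auxiliary relations $H$ on $R\times X$ (encoding $ySz$ and $xSp(x,y,z)$) and $K$ on $R\times(R\times_XS)$ (encoding the associativity identity), and extracts the desired equalities from the difunctionality of these relations, i.e.\ directly from the Mal'tsev property of $\EE$. You instead stay inside the unital fibre $\mathsf{Pt}_X\EE$ and use only the joint extremal epicity of the product injections. Your argument for axiom~(1) is complete: the pullback $T$ of $S$ along $(d_0^Rp_0^R,p)$ contains the section $x\mapsto(x,x,x)$ (since $p(x,x,x)=x$), so it is a subobject of the product $\Upsilon_{R^{\circ}}\times\Upsilon_S$ in the fibre through which both $\sigma_0^R$ and $\sigma_1^S$ factor, whence $T=R\times_XS$. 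The step you flag as the main obstacle does close exactly as you sketch, so let me reassure you: for right associativity take $W=\{(x,y,z,w)\,:\,xRy,\ yRz,\ zSw\}$, which is the product in $\mathsf{Pt}_X\EE$ of $(R,d_1^R,s_0^R)$ with $R\times_XS$ pointed over $X$ by its \emph{first} projection; the injections are $(x,y)\mapsto(x,y,y,y)$ and $(y,z,w)\mapsto(y,y,z,w)$, and the two composites $p(x,y,p(y,z,w))$ and $p(x,z,w)$ agree on both using only the partial Mal'tsev identities. Since the injections are jointly extremally epic in the fibre, and since the equalizer of two maps agreeing on them automatically contains the section and hence is a subobject in the fibre, the pair is in fact jointly epic and the two composites coincide. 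Left associativity is handled identically with $V=\{(x,y,z,w)\,:\,xRy,\ ySz,\ zSw\}$, \emph{after} axiom~(1) has been established so that $(x,y,z,w)\mapsto(p(x,y,z),z,w)$ lands in $R\times_XS$ --- so the order in which you prove the axioms matters. What your approach buys is uniformity (every axiom becomes an instance of ``two maps agreeing on a jointly epic pair of injections''), at the cost of having to identify the correct pointed-object structure on each iterated pullback; the paper's difunctionality trick is shorter once the right auxiliary relation has been guessed.
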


\begin{proof}
The previous observation shows  that if the pair $(R,S)$ centralizes each other in $\EE$, then the pair  $(\rho_{R^{\circ}},\rho_{S})$ commutes in the fibre $\mathsf{Pt}_X\EE$. Conversely suppose that this pair commutes; we have to show that all the axioms of Definition~\ref{connector} hold. For that, first introduce on $R\times X$ the relation $H$ defined  by $(xRy)Hz$  if and only if we have $ySz$ and $xSp(xRySz)$. For any $xRySz\in R\times_XS$ we get the following diagram relatively to the relation $H$: 
\[\xymatrix@C=2pc@R=1,5pc{
	xRy \ar@{.>}[rd] \ar[r] & y\\
	yRy \ar[r]\ar[ru] 	& z}\]
Since $\EE$ is a Mal'tsev category, the relation $H$ is a difunctional relation, and we get $(xRy)Hz$, namely $xSp(xRySz)$. We get $p(xRySz)Rz$ in the same way. 

Now define the  relation $K$ on $R\times (R\times_X S)$ by $(xRy)K(\bar yRzSw)$ by $y=\bar y$ and $p(xRySp(yRzSw))=p(xRzSw)$. We get this last identity for all $xRy$ and $yRzSw$ by the following diagram:
\[\xymatrix@C=2pc@R=1,5pc{
	xRy \ar@{.>}[rd] \ar[r] & yRzSz\\
	yRy \ar[r]\ar[ru] 	& yRzSw}\]
We get $p(p(xRySz)RzSw)=p(xRySw)$ in the same way.
\end{proof}

\begin{corollary}
	Let $\EE$ be a Mal'tsev category, and $(R,S)$ two equivalence relations on $X$. We have $[R,S]=0$ as soon as $R\cap S=\Delta_X$.
\end{corollary}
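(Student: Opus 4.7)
The plan is to extract this directly from the argument already carried out in direction $1 \Rightarrow 2$ of Theorem~\ref{theounit}, specialized to our two equivalence relations $R$ and $S$. Since $\EE$ is Mal'tsev, that theorem guarantees that the fibre $\mathsf{Pt}_X\EE$ is unital, and this is the only structural input I need. In light of the preceding proposition, it then suffices to produce a map $p\colon R\times_X S\to X$ satisfying the Mal'tsev axioms: such a map is automatically a connector and therefore witnesses $[R,S]=0$.

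Under the hypothesis $R\cap S=\Delta_X$, the canonical factorization $\alpha\colon R\square S\to R\times_X S$ is a monomorphism, as observed in the paragraph preceding Proposition~\ref{oneconnector}. Viewed in the fibre $\mathsf{Pt}_X\EE$, the object $R\times_X S$ is the product of the subobjects $\Upsilon_{R^{\circ}}$ and $\Upsilon_S$ of $\Upsilon_{\nabla_X}$, coming with canonical injections $\sigma_0^R$ and $\sigma_1^S$. Both of these lift through $\alpha$: the map $R\to R\square S\colon (x,y)\mapsto (x,y,x,y)$ (which uses the reflexivity of $S$) composes with $\alpha$ to give $\sigma_0^R$, and the map $S\to R\square S\colon (x,y)\mapsto (x,x,y,y)$ (using the reflexivity of $R$) composes with $\alpha$ to give $\sigma_1^S$. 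Since the pair $(\sigma_0^R,\sigma_1^S)$ is jointly extremally epic in the unital fibre and $\alpha$ is a monomorphism through which both factor, $\alpha$ must be an isomorphism.

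I then define $p=d_1^S\cdot p_0^S\cdot\alpha^{-1}\colon R\times_X S\to X$, which extracts the ``fourth coordinate'' $w$ from the unique element $(x,y,w,z)\in R\square S$ projecting to $(xRySz)$. A direct reading of $R\square S$ shows that $\alpha^{-1}(xRxSy)=(x,x,y,y)$ and $\alpha^{-1}(xRySy)=(x,y,x,y)$, so $p$ satisfies the Mal'tsev identities $p(xRxSy)=y$ and $p(xRySy)=x$. By the preceding proposition, $p$ is then automatically a connector between $R$ and $S$. The only genuinely delicate point is the verification that the two lifts of the canonical injections really land inside $R\square S$; this reduces to the reflexivities of $R$ and $S$, and everything else is a direct translation of the unital-fibre diagram chase already performed in Theorem~\ref{theounit}.
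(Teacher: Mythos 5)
Your argument is correct and is precisely the one the paper intends: its proof of this corollary simply points back to the first part of the proof of Theorem~\ref{theounit}, where $R\cap S=\Delta_X$ makes $\alpha\colon R\square S\to R\times_X S$ a monomorphism through which both canonical sections factor, unitality of $\mathsf{Pt}_X\EE$ forces $\alpha$ to be an isomorphism, and $d_1^S\cdot p_0^S\cdot\alpha^{-1}$ yields the connector. You have merely filled in the details (the explicit lifts via reflexivity and the verification of the Mal'tsev identities), all of which check out.
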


\begin{proof}
Straightforward from the first part of the proof of Theorem~\ref{theounit}.
\end{proof}

The following stability properties easily follow:

\begin{proposition}\label{stab}\cite{BG3}
	Let $\mathbb E$ be a Mal'tsev category. Let also $R,R',S$ be equivalence relations on $X$ and $\bar R, \bar S$ on $Y$. Then we get:\\
	(a) $[R,S]=0 \iff [S,R]=0$\\
	(b) $R'\subset R \; {\rm and}\;[R,S]=0\; \Rightarrow [R',S]=0$\\
	(c) $[R,S]=0 \;  {\rm and}\;[\bar R,\bar S]=0\; \Rightarrow \; [R\times \bar R,S\times \bar S]=0$\\
	(d) when $u\colon  U\into X$ is a monomorphism, we get:\\ $[R,S]=0 \Rightarrow [u^{-1}(R),u^{-1}(S)]=0$
\end{proposition}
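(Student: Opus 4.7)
The plan is to invoke the characterization of centralization provided by the previous proposition: in a Mal'tsev category $\EE$ one has $[R,S]=0$ if and only if the pair of subobjects $(\rho_{R^\circ},\rho_S)$ of $\Upsilon_{\nabla_X}$ commutes in the unital fibre $\mathsf{Pt}_X\EE$. Since equivalence relations satisfy $R^\circ=R$, each of the four assertions reduces to a general feature of commutation of subobjects in a unital category.

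For \emph{(a)}, commutation in a unital category is visibly symmetric: precomposing a cooperator $\phi\colon A\times B\to C$ with the twist isomorphism $B\times A\xrightarrow{\sim}A\times B$ produces a cooperator of $(g,f)$ from one of $(f,g)$. Combined with $R^\circ=R$ and $S^\circ=S$, this yields $[R,S]=0\iff[S,R]=0$. For \emph{(b)}, the inclusion $R'\subseteq R$ induces a mono $\Upsilon_{R'^\circ}\hookrightarrow\Upsilon_{R^\circ}$ in $\mathsf{Pt}_X\EE$, and precomposing a cooperator of $(\rho_{R^\circ},\rho_S)$ with the induced mono $\Upsilon_{R'^\circ}\times_{\Upsilon_X}\Upsilon_S\hookrightarrow\Upsilon_{R^\circ}\times_{\Upsilon_X}\Upsilon_S$ yields a cooperator of $(\rho_{R'^\circ},\rho_S)$. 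For \emph{(c)} I argue directly: the canonical isomorphism $(R\times\bar R)\times_{X\times Y}(S\times\bar S)\cong(R\times_X S)\times(\bar R\times_Y\bar S)$ allows one to take $q=p\times\bar p$ as a candidate connector, and the three axioms of Definition~\ref{connector} hold componentwise.

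The main obstacle is \emph{(d)}. The reindexing $u^*\colon\mathsf{Pt}_X\EE\to\mathsf{Pt}_U\EE$ along $u$ is the cartesian structure of the fibration of points, hence left exact, and it sends the zero object of $\mathsf{Pt}_X\EE$ to that of $\mathsf{Pt}_U\EE$; in particular it preserves commutation of pairs, so $(u^*\rho_{R^\circ},u^*\rho_S)$ commute as subobjects of $u^*\Upsilon_{\nabla_X}$ in $\mathsf{Pt}_U\EE$. The identification $u^{-1}(R)=(u\times u)^{-1}(R)$ exhibits $\rho_{u^{-1}(R)^\circ}$ as the pullback of $u^*\rho_{R^\circ}$ along the canonical mono $\Upsilon_{\nabla_U}\hookrightarrow u^*\Upsilon_{\nabla_X}$ induced by $1\times u$, and analogously for $S$; each of $\rho_{u^{-1}(R)^\circ}$, $\rho_{u^{-1}(S)}$ is thereby realised as a sub-subobject of $u^*\rho_{R^\circ}$, $u^*\rho_S$ respectively, and the monotonicity of \emph{(b)} gives commutation of $(\rho_{u^{-1}(R)^\circ},\rho_{u^{-1}(S)})$ as subobjects of $u^*\Upsilon_{\nabla_X}$. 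Finally, since both subobjects factor through the mono $\Upsilon_{\nabla_U}\hookrightarrow u^*\Upsilon_{\nabla_X}$ and the canonical injections $j_0,j_1$ in a unital category are jointly extremally epic, the cooperator itself factors through $\Upsilon_{\nabla_U}$, delivering commutation of $(\rho_{u^{-1}(R)^\circ},\rho_{u^{-1}(S)})$ inside $\Upsilon_{\nabla_U}$ in $\mathsf{Pt}_U\EE$, that is $[u^{-1}(R),u^{-1}(S)]=0$.
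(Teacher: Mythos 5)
Your argument is correct. Note first that the paper itself does not prove this proposition: it simply refers to Propositions~3.10, 3.12 and~3.13 of~\cite{BG3}, so there is no in-text proof to compare against; what you have written is essentially the intended argument, organised around the characterization of $[R,S]=0$ as commutation of the pair $(\rho_{R^\circ},\rho_S)$ in the unital fibre $\mathsf{Pt}_X\EE$ that the paper establishes immediately before the statement. Parts (a) and (b) are exactly the symmetry and down-closure of the cooperation relation in a unital category; for (a) one should, as you implicitly do, also use that $\rho_{R^\circ}$ and $\rho_R$ coincide as subobjects of $\Upsilon_{\nabla_X}$ when $R$ is an equivalence relation (via the symmetry automorphism of $R$). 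Part (c) by direct componentwise verification of the connector axioms, legitimised by the Yoneda embedding, is fine and arguably simpler than routing it through the fibres. For (d), the two steps you state without detail are both standard but deserve a line each: $u^*$ preserves cooperating pairs because, being left exact and sending the zero object $1_X$ of $\mathsf{Pt}_X\EE$ to $1_U$, it preserves binary products together with the canonical injections $j_0=(1,0)$ and $j_1=(0,1)$; and the final factorization of the cooperator $\phi$ through the monomorphism $\Upsilon_{\nabla_U}\into u^*\Upsilon_{\nabla_X}$ is obtained by pulling that monomorphism back along $\phi$ and observing that $j_0$ and $j_1$ both factor through the resulting subobject of the product, which is therefore an isomorphism since the pair is jointly extremally epic. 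With those two remarks made explicit the proof is complete.
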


\begin{proof}
See Propositions~3.10, 3.12 and~3.13 in~\cite{BG3}. 
\end{proof} 

\begin{definition}
	Let $\mathbb E$ be a Mal'tsev category. An equivalence relation $R$ on an object $X$ is said to be \emph{abelian} when we have $[R,R]=0$ and \emph{central} when we have $[R,\nabla_X]=0$. An object $X$ is said to be \emph{affine} when we have $[\nabla_X,\nabla_X]=0$.	
\end{definition}

We shall denote by $\mathsf{Aff}(\EE)$ the full subcategory of affine objects in $\EE$.

\begin{proposition}\label{maladd}
 Let $\mathbb E$ be a Mal'tsev category. When an equivalence relation $R$ on $X$ is abelian, then the connector $p$ realizing $[R,R]=0$ is such that $p(xRyRz)=p(zRyRx)$. An object $X$ is affine if and only if it is endowed with a (necessarily unique) internal Mal'tsev operation which is necessarily associative and commutative. Any morphism between affine objects commutes with the internal Mal'tsev operations.
\end{proposition}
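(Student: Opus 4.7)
My plan is to prove the three claims---commutativity of the connector when $R$ is abelian, the affine characterization, and the automatic compatibility of morphisms---in turn. All three will hinge on the uniqueness (recalled just above, from \cite{BG,Bourn11}) of a map $R\times_X S\to X$ satisfying the Mal'tsev axioms in a Mal'tsev category.

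For the commutativity, given an abelian relation $R$ with connector $p\colon R\times_X R\to X$, I would define $\bar p\colon R\times_X R\to X$ by composing $p$ with the swap $(x,y,z)\mapsto(z,y,x)$, which is legal thanks to the symmetry of $R$. The identities $\bar p(x,x,y)=p(y,x,x)=y$ and $\bar p(x,y,y)=p(y,y,x)=x$ exhibit $\bar p$ as another map on $R\times_X R$ satisfying the Mal'tsev axioms; by uniqueness $\bar p=p$, so $p(x,y,z)=p(z,y,x)$. For the affine characterization, note that $\nabla_X\times_X\nabla_X\cong X^3$, so $[\nabla_X,\nabla_X]=0$ is precisely the existence of a connector $p\colon X^3\to X$. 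Such a $p$ is a (now total) Mal'tsev operation which is left and right associative by the definition of connector, hence associative by the internal version of Proposition~\ref{associative=left+right}, and commutative by the previous step applied to $R=\nabla_X$. Conversely, any internal Mal'tsev operation on $X$ satisfies the Mal'tsev axioms as a morphism $X^3\to X$, so by the same uniqueness it must coincide with the connector witnessing $[\nabla_X,\nabla_X]=0$, making $X$ affine and the Mal'tsev operation unique.

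For the third claim, my plan is to transfer it into the unital fibre $\mathsf{Pt}_X\EE$ and invoke Proposition~\ref{monab}. The map $\alpha\colon X\times X\to X\times Y$, $(x_1,x_2)\mapsto(x_1,f(x_2))$, lifts to a morphism in $\mathsf{Pt}_X\EE$ from $\nabla_X=(X\times X,p_0,\Delta_X)$ to $f^{*}(\nabla_Y)=(X\times Y,p_0,\langle 1_X,f\rangle)$. Both source and target are commutative objects of this unital fibre: $\nabla_X$ because $X$ is affine, and $f^{*}(\nabla_Y)$ because the left-exact pullback functor $f^{*}\colon\mathsf{Pt}_Y\EE\to\mathsf{Pt}_X\EE$ preserves the commutativity witnessed by $Y$ being affine. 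Proposition~\ref{monab}, applied in $\mathsf{Pt}_X\EE$, then forces $\alpha$ to be an internal morphism of commutative monoids. The unitality conditions on the cooperators pin down the fibre multiplications on $\nabla_X$ and $f^{*}(\nabla_Y)$ as $(x_1,x_2,x_3)\mapsto(x_1,p_X(x_2,x_1,x_3))$ and $(x,y_1,y_2)\mapsto(x,p_Y(y_1,f(x),y_2))$ respectively, and evaluating the monoid-morphism identity $\alpha\circ\mu_{\nabla_X}=\mu_{f^{*}(\nabla_Y)}\circ(\alpha\times\alpha)$ yields, after renaming, exactly $f\circ p_X=p_Y\circ f^{3}$.

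I expect this last claim to be the main obstacle: the first two reduce cleanly to the uniqueness of maps satisfying the Mal'tsev axioms, whereas the third requires both the passage to the unital fibre $\mathsf{Pt}_X\EE$ and the explicit identification of its cooperator multiplications with the Mal'tsev operations on $X$ and $Y$, so that Proposition~\ref{monab} can be translated back into the Mal'tsev-morphism equation.
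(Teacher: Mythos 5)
Your proof is correct, but it does not follow the paper's route on the first assertion, and it supplies an argument the paper omits for the third. For the commutativity $p(xRyRz)=p(zRyRx)$, the paper introduces an auxiliary relation $L$ on $R$ given by $(x,y)L(z,w)$ if and only if $y=z$ and $p(xRyRw)=p(wRyRx)$, verifies $(x,y)L(y,y)$, $(y,y)L(y,y)$ and $(y,y)L(y,z)$ from the partial Mal'tsev identities, and concludes $(x,y)L(y,z)$ by difunctionality of $L$. You instead precompose $p$ with the symmetry swap and invoke the uniqueness, established in the proposition immediately preceding this one, of a map $R\times_X R\to X$ satisfying the Mal'tsev axioms; this is shorter, equally valid (the swap is a legitimate internal morphism because $R$ is symmetric), and has the merit of running all three claims through the single principle of uniqueness of cooperators in the unital fibre, whereas the paper's difunctionality trick is self-contained and does not presuppose that uniqueness statement. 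On the second assertion the two arguments coincide: the paper's ``description of the connector associated with $[\nabla_X,\nabla_X]=0$'' is exactly your unwinding via the Example following Definition~\ref{connector}, Proposition~\ref{associative=left+right}, and the first part. For the third assertion the paper gives no proof at all; your reduction to Proposition~\ref{monab} applied to the morphism $1_X\times f\colon \nabla_X\to f^*(\nabla_Y)$ in $\mathsf{Pt}_X\EE$, together with the explicit identification of the fibre multiplications as $(x_1,x_2,x_3)\mapsto (x_1,p_X(x_2,x_1,x_3))$ and $(x,y_1,y_2)\mapsto (x,p_Y(y_1,f(x),y_2))$, is a correct and complete way to fill that gap, the only ingredients being that $f^*$ is left exact and preserves the zero object of the fibres, hence preserves commutative objects.
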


\begin{proof}
Define on $R$ the relation $L$ define by $(x,y)L(z,w)$ if and only if we have $y=z$ and $p(xRyRw)=p(wRyRx)$. The following diagram holds when $R$ is abelian and determines our assertion:
\[\xymatrix@C=2pc@R=1,5pc{
	xRy \ar@{.>}[rd] \ar[r] & yRy\\
	yRy \ar[r]\ar[ru] 	& yRz}\]
The next point is just the description of the connector $p$ associated with the centralization $[\nabla_X,\nabla_X]=0$.
\end{proof}

\begin{corollary}\label{aff}
Let $\mathbb E$ be a Mal'tsev category. $\mathsf{Aff}(\EE)$ is stable under finite products and subobjects in $\EE$. An internal abelian group in $\EE$ is just a pointed affine object $0_A\colon  1\into A$.
\end{corollary}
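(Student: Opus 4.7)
The corollary breaks into three assertions, and I would address each in turn, reducing stability to the general results of Proposition~\ref{stab} and reducing the abelian group claim to the characterization of affine objects given in Proposition~\ref{maladd}.

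For stability under finite products, I would first observe that the terminal object $1$ is trivially affine since $\nabla_1 = \Delta_1$. For two affine objects $X$ and $Y$, the key point is that the equivalence relation $\nabla_X \times \nabla_Y$ on $X \times Y$ coincides with $\nabla_{X \times Y}$, which is immediate from the fact that $\nabla_X = X \times X$ is the top relation. Applying Proposition~\ref{stab}(c) to $[\nabla_X,\nabla_X]=0$ and $[\nabla_Y,\nabla_Y]=0$ then yields $[\nabla_{X\times Y},\nabla_{X\times Y}]=0$, so that $X\times Y$ is affine. For stability under subobjects, given a monomorphism $u\colon U\into X$ with $X$ affine, I would note that $u^{-1}(\nabla_X) = \nabla_U$ and invoke Proposition~\ref{stab}(d).

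For the remaining assertion, let $(A,0_A)$ be a pointed affine object. By Proposition~\ref{maladd}, $A$ carries a unique internal Mal'tsev operation $p\colon A^3 \to A$, which is automatically associative and commutative. I would define addition by $x + y := p(x,0_A,y)$ and inversion by $-x := p(0_A,x,0_A)$. The unit axiom $0_A + x = x = x + 0_A$ is immediate from the defining Mal'tsev identities, commutativity of $+$ follows from that of $p$, and associativity of $+$ is the instance $p(p(x,0_A,y),0_A,z) = p(x,0_A,p(y,0_A,z))$ of the associativity of $p$. For the inverse axiom, one computes
$$x + (-x) = p(x, 0_A, p(0_A, x, 0_A)) = p(p(x, 0_A, 0_A), x, 0_A) = p(x, x, 0_A) = 0_A,$$
again using associativity of $p$. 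Conversely, any internal abelian group structure on $A$ defines an associative commutative internal Mal'tsev operation via $p(x,y,z) := x - y + z$, and the uniqueness clause of Proposition~\ref{maladd} guarantees that these two constructions are mutually inverse. Finally, a morphism of pointed affine objects preserves $0_A$ and commutes with $p$ (by the last sentence of Proposition~\ref{maladd}), so it automatically preserves $+$ and $-$.

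The stability claims come essentially for free from Proposition~\ref{stab}, so the only substantial work lies in the correspondence with abelian groups; the main obstacle there is verifying the inverse axiom, the one identity that genuinely requires invoking the full associativity of $p$ rather than just its defining Mal'tsev identities.
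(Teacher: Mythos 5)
Your proposal is correct and follows exactly the route the paper takes: its proof of Corollary~\ref{aff} simply cites points~(c) and~(d) of Proposition~\ref{stab} together with Proposition~\ref{maladd}, and your argument is a faithful unpacking of those citations (identifying $\nabla_X\times\nabla_Y$ with $\nabla_{X\times Y}$ and $u^{-1}(\nabla_X)$ with $\nabla_U$, then translating between the associative commutative Mal'tsev operation and the group structure via $x+y=p(x,0_A,y)$). The details you supply, including the verification of the inverse axiom via associativity of $p$, are all sound.
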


\begin{proof}
It is a direct consequence of points~(c) and (d) in Proposition~\ref{stab} and of Proposition~\ref{maladd}.
\end{proof}

\subsection{Groupoid characterization}

Let us denote respectively by  $\mathsf{RG}(\EE)$ and  $\mathsf{Grpd}(\EE)$ the categories of internal reflexive graphs and of internal groupoids in any finitely complete category $\EE$. The forgetful functor $W_{\EE}\colon \mathsf{Grpd}(\EE) \to \mathsf{RG}(\EE)$ is left exact and conservative. As such, it is faithful and any monomorphism of $\mathsf{Grpd}(\EE)$ is hypercartesian with respect to~$W_{\EE}$.

\noindent\textbf{Internal groupoids}

According to Example~\ref{exGroupoid} following which a groupoid is a reflexive graph
endowed with a connector $p$ on the pair $(\Eq[d_0], \Eq[d_1])$, we immediately get the first part of the following:

\begin{lemma}\label{subreflexive}
Given a Mal'tsev category $\EE$, there is on any reflexive graph at most one groupoid structure. Moreover, the induced inclusion functor $W_{\EE}\colon  \mathsf{Grpd}(\EE) \into \mathsf{RG}(\EE)$ is full~\cite{CPP} and such that any sub-reflexive graph of a groupoid is itself a groupoid~\cite{Bourn1}. 
\end{lemma}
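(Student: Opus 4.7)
My plan is to reduce each of the three assertions to a statement about connectors on the pair $(\Eq[d_0],\Eq[d_1])$, using the bijection of Example~\ref{exGroupoid}. The two Mal'tsev-specific ingredients that do the work are the uniqueness of connectors (a consequence of the unital characterization, recorded in the proposition just before Proposition~\ref{maladd}) and the stability of centralization under inverse image along monomorphisms (Proposition~\ref{stab}(d)). The uniqueness of the groupoid structure on a reflexive graph then follows immediately: Example~\ref{exGroupoid} identifies such structures with connectors between $\Eq[d_0]$ and $\Eq[d_1]$, and in a Mal'tsev category a pair of equivalence relations on a common object admits at most one map $R\times_X S\to X$ satisfying the partial Mal'tsev axioms, so at most one groupoid structure can sit on a given reflexive graph.

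I would tackle the sub-reflexive-graph claim before fullness, since the latter reduces to the former. Let $\iota\colon G\into H$ be a monomorphism of reflexive graphs with $H$ a groupoid. Because $\iota_0$ and $\iota_1$ are monomorphisms in $\EE$, a short computation gives $\Eq[d_i^G]=\iota_1^{-1}(\Eq[d_i^H])$ for $i=0,1$. The groupoid structure on $H$ is encoded, via Example~\ref{exGroupoid}, by $[\Eq[d_0^H],\Eq[d_1^H]]=0$ in $H_1$, and Proposition~\ref{stab}(d) transports this to $[\Eq[d_0^G],\Eq[d_1^G]]=0$ in $G_1$; invoking Example~\ref{exGroupoid} again, $G$ carries a groupoid structure. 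Moreover, the construction presents the connector on $G$ as obtained by restricting the one on $H$, so $\iota$ is automatically a morphism of groupoids.

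For the fullness of $W_{\EE}$, let $\phi\colon G\to H$ be a morphism of reflexive graphs between groupoids. The graph of $\phi$, embedded via $(1_G,\phi)\colon G\into G\times H$, is a sub-reflexive graph of the product groupoid $G\times H$. By the previous step it is itself a groupoid, and the inclusion into $G\times H$ is a morphism of groupoids. Composing with the groupoid projection $\pi_H\colon G\times H\to H$ recognises $\phi=\pi_H\circ(1_G,\phi)$ as a morphism of groupoids. The delicate point I expect to be the main obstacle is checking that the groupoid structure produced on the graph of $\phi$ really is the restriction of the one on $G\times H$, so that the inclusion preserves composition; this is exactly the content of the final remark of the previous paragraph, itself tracing back to Proposition~\ref{stab}(d).
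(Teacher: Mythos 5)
Your argument is correct, and for the uniqueness statement and the sub-reflexive-graph statement it follows essentially the paper's own route: Example~\ref{exGroupoid} plus uniqueness of connectors for the first, and Proposition~\ref{stab}(d) applied to $[\Eq[d_0],\Eq[d_1]]=0$ for the third (your one-step identification $\Eq[d_i^G]=\iota_1^{-1}(\Eq[d_i^H])$, which uses only that $\iota_0$ is monic, slightly streamlines the paper's detour through the full inverse image $f_0^{-1}(\mathbb Y)$). Where you genuinely diverge is fullness: you use the graph trick, embedding the graph of $\phi$ into the product groupoid $G\times H$ and invoking the sub-graph result, whereas the paper argues directly that any morphism of reflexive graphs between groupoids commutes with the connectors, because this can be tested against the pair $(\sigma_0^R,\sigma_1^S)$ exhibiting $\Eq[d_0]\times_{X_0}\Eq[d_1]$ as a product in the unital fibre $\mathsf{Pt}_{X_1}\EE$ --- a pair which is jointly extremally epic, hence (the category being finitely complete, so that equalizers exist) jointly epic. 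That direct test is also the cleanest way to discharge the two points your route leaves implicit: that the product of two groupoids in $\mathsf{RG}(\EE)$ is a groupoid whose projections preserve composition (existence of the connector being Proposition~\ref{stab}(c)), and --- your self-identified ``delicate point'' --- that the connector produced on a sub-reflexive graph is the restriction of the ambient one. For the latter you appeal to the construction inside the proof of Proposition~\ref{stab}(d), which lives in~\cite{BG3} and is not reproduced here; no such appeal is needed, since $\iota_1\circ q$ and $p\circ(\iota_1\times\iota_1\times\iota_1)$ both compose with $\sigma_0$ and $\sigma_1$ to give $\iota_1\circ d_0^R$ and $\iota_1\circ d_1^S$ respectively, hence coincide. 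In short, your proof works, but the jointly-epic test you would need anyway already yields fullness directly, which makes the graph trick a detour rather than a saving.
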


\begin{proof}
Let be given a morphism of reflexive graphs:
$$ \xymatrix@=17pt{
	X_1\ar@<-1,ex>[d]_{d_0}\ar@<+1,ex>[d]^{d_1} \ar[rr]^{f_1}
	&& Y_1 \ar@<-1,ex>[d]_{d_0}\ar@<+1,ex>[d]^{d_1}\\
	X_0  \ar[rr]_{f_0} \ar[u]_{}& & Y_0 \ar[u]_{}   
}
$$
When these reflexive graphs are underlying groupoid structures, the commutation of this morphism with the connectors is checked by composition with the extremally epic pair involved in the definition of $\Eq[d_0]\times _{X_0} \Eq[d_1]$.

Now given any subobject $(f_0,f_1)\colon \mathbb X \into \mathbb Y$ in $\mathsf{RG}(\EE)$ with $\mathbb Y$ a groupoid, the inverse image $f_0^{-1}(\mathbb Y)$ along $f_0$ determines a subobject:
$$ \xymatrix@=17pt{
	{X_1\;}\ar@<-1,ex>[d]_{d_0}\ar@<+1,ex>[d]^{d_1} \ar@{>->}[rr]^{\phi_1}
	&& f_0^{-1}(\mathbb Y) \ar@<-1,ex>[d]_{d'_0}\ar@<+1,ex>[d]^{d'_1}\\
	X_0  \ar@{=}[rr] \ar[u]_{}& & X_0 \ar[u]_{}   
}
$$ 
where the right hand side part is a groupoid as well. Now $\Eq[d_i]=\phi_1^{-1}(\Eq[d'_i])$ so that $\mathbb X$ is underlying a groupoid structure by the point~d) in Proposition~\ref{stab}.
\end{proof}

Whence another characterization theorem:

\begin{theorem}\label{chargr}\cite{Bourn1} 
Given any finitely complete category $\EE$, the following conditions are equivalent:\\
1) $\EE$ is a Mal'tsev category;\\
2) the forgetful functor $W_{\mathbb E}\colon \mathsf{Grpd}(\EE) \to \mathsf{RG}(\EE)$ is saturated on subobjects, namely any subobject $n\colon \mathbb X \into W_{\EE}(\mathbb Y)$ in $\mathsf{RG}(\EE)$ is the image, up to isomorphism, of a monomorphism $m\colon  \mathbb{\overline{X}} \into \mathbb Y$ in $\mathsf{Grpd}(\EE)$.
\end{theorem}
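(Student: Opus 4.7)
The plan is to prove the two implications separately, leveraging Lemma~\ref{subreflexive} for the forward direction and the indiscrete groupoid construction for the converse.

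For $1) \Rightarrow 2)$, I would assume $\EE$ is Mal'tsev and take an arbitrary subobject $n\colon \mathbb{X} \into W_{\EE}(\mathbb{Y})$ in $\mathsf{RG}(\EE)$. By the second part of Lemma~\ref{subreflexive}, any sub-reflexive graph of a groupoid is itself underlying a (unique) groupoid structure; call this groupoid $\overline{\mathbb{X}}$, so that $W_{\EE}(\overline{\mathbb{X}}) = \mathbb{X}$. Then $n$ can be regarded as a morphism $W_{\EE}(\overline{\mathbb{X}}) \to W_{\EE}(\mathbb{Y})$ in $\mathsf{RG}(\EE)$ between the underlying reflexive graphs of two internal groupoids. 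By the fullness of $W_{\EE}$ (also from Lemma~\ref{subreflexive}), there is a unique $m\colon \overline{\mathbb{X}} \to \mathbb{Y}$ in $\mathsf{Grpd}(\EE)$ with $W_{\EE}(m) = n$. Since $W_{\EE}$ is faithful (being left exact and conservative) and $n = W_{\EE}(m)$ is monic, $m$ is itself a monomorphism in $\mathsf{Grpd}(\EE)$, giving the desired lift.

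For $2) \Rightarrow 1)$, I would start with an arbitrary reflexive relation $(d_0, d_1)\colon R \into X \times X$ in $\EE$ and show it is an equivalence relation. The object $X\times X$ carries the indiscrete groupoid structure $\nabla_X$ (with the two projections as $d_0, d_1$, the diagonal as $s_0$, and composition given by projection to the outer factors). The reflexive graph $\mathbb{X} = (R \rightrightarrows X)$ is a sub-reflexive graph of $W_{\EE}(\nabla_X)$ via $(d_0, d_1)$, so condition $2)$ provides a monomorphism $m\colon \overline{\mathbb{X}} \into \nabla_X$ in $\mathsf{Grpd}(\EE)$ with $W_{\EE}(m) \cong n$. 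Transporting the groupoid structure across this isomorphism equips the reflexive graph $(R \rightrightarrows X)$ itself with an internal groupoid structure. Since $(d_0, d_1)\colon R \to X\times X$ is jointly monic (it is a relation), the existence of an inverse operation forces symmetry of $R$ and the composition operation forces transitivity; hence $R$ is an equivalence relation.

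The main potential obstacle is the $2) \Rightarrow 1)$ direction, specifically the passage from "$R$ carries an internal groupoid structure" to "$R$ is an equivalence relation". The key technical point is that, once one has a groupoid structure on a reflexive graph whose $(d_0, d_1)$ pair is jointly monic, all the equational axioms required for symmetry and transitivity reduce, after composition with the jointly monic pair, to identities which already hold; this is a routine internalization, best verified via the Yoneda embedding of Proposition~\ref{Ystruct1} applied to the internal groupoid structure, so that one may argue element-wise in $\mathsf{Set}$ where inverses and composition of a groupoid whose arrows inject into pairs of objects visibly produce an equivalence relation.
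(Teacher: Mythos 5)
Your proof is correct and takes essentially the same route as the paper's: the forward implication is the paper's appeal to Lemma~\ref{subreflexive} (sub-reflexive graphs of groupoids are groupoids, together with fullness and faithfulness of $W_{\EE}$), and the converse is the paper's application of condition 2) to the inclusion $R \into \nabla_X$. Your element-wise verification that a groupoid structure on a reflexive relation forces symmetry and transitivity simply fills in a detail the paper leaves implicit.
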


\begin{proof}
$[1) \Rightarrow 2)]$ is a direct consequence of the previous lemma. Suppose 2) and start with a reflexive relation $R$ on $X$. Then condition 2) applied to the inclusion $R\into \nabla_X$ in $\mathsf{RG}(\EE)$ makes $R$ an equivalence relation. 
\end{proof}

\noindent\textbf{Internal categories}

Now, what are the internal categories in a Mal'tsev category? The answer is given by the following result which provides us with another characterization of internal groupoids.

\begin{proposition}\label{intcat}\cite{CPP,Bourn11}
	Let $\EE$ be a Mal'tsev category and
	$\xymatrix@C=1cm{ X_1 \ar@<1.2ex>[r]^{d_0}\ar@<-1.2ex>[r]_{d_1} & X_0 \ar[l]|(.50){s_0} }$
	an internal reflexive graph. The following conditions are equivalent:\\
	1) the following subobjects commute in the fibre $\mathsf{Pt}_{X_0}\EE$:
	$$
	\xymatrix@=30pt{
		{X_1\;} \ar@{>->}[r]^-{(d_0,1_{X_{1}})}\ar@<-1ex>[rd]_{d_0} & X_0\times X_1 \ar@<-1ex>[d]_{p_{X_0}} & {\;X_1} \ar@{>->}[l]_-{( d_1,1_{X_{1}})} \ar@<2ex>[ld]^{d_1}\\
		& X_0 \ar[lu]_>>>>>>>{s_0} \ar[u]_-{(1_{X_{0}},s_0)} \ar@<-1ex>[ru]^>>>>>>{s_0}
	}
	$$
	2) this reflexive graph is underlying an internal category; \\
	3) this reflexive graph is underlying an internal groupoid.
\end{proposition}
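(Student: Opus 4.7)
My plan is to prove the cycle of implications $(3)\Rightarrow(2)\Rightarrow(1)\Rightarrow(3)$, with the Mal'tsev hypothesis entering crucially at the last step.

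The implication $(3)\Rightarrow(2)$ is immediate: any internal groupoid is in particular an internal category. For $(2)\Rightarrow(1)$, I would first identify the product of $(X_1,d_0,s_0)$ and $(X_1,d_1,s_0)$ in the fibre $\mathsf{Pt}_{X_0}\EE$ as the pullback $X_1\times_{X_0}X_1$ of $d_0$ against $d_1$, which (up to the choice of convention for composable pairs) is the domain of the composition map of the internal category. The canonical injections of this product in the unital fibre are $i_A\colon x\mapsto(x,s_0d_0(x))$ and $i_B\colon y\mapsto(s_0d_1(y),y)$. Given a composition $m\colon X_1\times_{X_0}X_1\to X_1$ of the category structure, I would set $\phi=(d_0\pi_1,m)\colon X_1\times_{X_0}X_1\to X_0\times X_1$; this is a morphism in $\mathsf{Pt}_{X_0}\EE$, and a direct verification shows that the unit laws $m(x,s_0d_0(x))=x$ and $m(s_0d_1(y),y)=y$ translate precisely into the cooperator equations $\phi\circ i_A=(d_0,1_{X_1})$ and $\phi\circ i_B=(d_1,1_{X_1})$, establishing the commutation asserted in~(1).

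For $(1)\Rightarrow(3)$, the cooperator, unique by the uniqueness of cooperation in the unital fibre $\mathsf{Pt}_{X_0}\EE$ (which is unital by Theorem~\ref{theounit}), decomposes as $\phi=(d_0\pi_1,m')$ for a uniquely determined $m'\colon X_1\times_{X_0}X_1\to X_1$ satisfying the same unit laws. By Example~\ref{exGroupoid}, endowing the reflexive graph with a groupoid structure is equivalent to producing a connector between the equivalence relations $\Eq[d_0]$ and $\Eq[d_1]$ on $X_1$, and the plan is to extract such a connector from $m'$, using the Mal'tsev structure to supply the required inverse-like operation: the candidate is a map $p\colon\Eq[d_0]\times_{X_1}\Eq[d_1]\to X_1$ realizing, intuitively, $(a,b,c)\mapsto a\cdot b^{-1}\cdot c$.

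The main obstacle is precisely this last construction. The cooperator $m'$ only records the unit laws, so the full domain/codomain compatibility of a composition is not formally implied, and neither is the existence of inverses. The Mal'tsev hypothesis intervenes essentially here: it forces $m'$ to be compatible with $\Eq[d_0]$ and $\Eq[d_1]$ in the way required (through the type of argument already used in $(1)\Rightarrow(2)$ of Theorem~\ref{theounit}, where a partial operation built from a cooperator in a unital fibre is promoted to a connector by difunctionality), and it ensures that the natural candidate built from $m'$ satisfies the left/right associativity of Definition~\ref{connector}, with uniqueness guaranteed by the principle of Proposition~\ref{oneconnector} applied in the relevant fibre.
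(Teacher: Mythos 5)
Your cycle $(3)\Rightarrow(2)\Rightarrow(1)\Rightarrow(3)$ is a reasonable organisation, and the step $(2)\Rightarrow(1)$ is correct and matches the paper: the product of $(X_1,d_0,s_0)$ and $(X_1,d_1,s_0)$ in $\mathsf{Pt}_{X_0}\EE$ is indeed $X_1\times_{X_0}X_1$, and the unit laws of the composition translate exactly into the cooperator equations. The problem is that the entire mathematical content of the proposition sits in $(1)\Rightarrow(2)$ and $(2)\Rightarrow(3)$, and your treatment of $(1)\Rightarrow(3)$ stops at naming the obstacle rather than resolving it. You correctly observe that the cooperator only records the unit laws, so that neither the incidence axioms, nor associativity, nor inverses are formally implied; but the sentences that follow (``it forces $m'$ to be compatible\dots'', ``it ensures that the natural candidate\dots satisfies left/right associativity'') assert the conclusion without an argument. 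In particular you never construct the map $(a,b,c)\mapsto a\cdot b^{-1}\cdot c$: producing $b^{-1}$ is precisely what has to be proved, and no candidate is exhibited. The appeal to Proposition~\ref{oneconnector} is also off target, since $\Eq[d_0]\cap\Eq[d_1]$ is the object of parallel arrows of the category, which is not $\Delta_{X_1}$ in general, so that proposition does not apply to this pair.

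For comparison, the paper fills the gap in two separate moves. First, $(1)\Rightarrow(2)$: the cooperator is necessarily of the form $\phi=(d_0d_2,d_1)$ for a map $d_1\colon X_1\times_{X_0}X_1\to X_1$ satisfying the unit laws, and the incidence axioms and the associativity axiom (checked on the object $X_3$ of composable triples) are obtained by composing with the pairs of sections $(s_0,s_1)$ and $(s_0,s_2)$, which are \emph{jointly extremally epic} because the fibres are unital --- this is where the Mal'tsev hypothesis enters, and it is a genuine verification, not a formality. Second, $(2)\Rightarrow(3)$: one shows that $(d_0,d_1)\colon X_1\times_{X_0}X_1\to \Eq[d_0]$ is a monomorphism (right-cancellability of composition, proved by a difunctionality argument on an auxiliary relation $H$), so that it defines a reflexive, right punctual relation on $(d_0,s_0)$ in the \emph{strongly unital} fibre $\mathsf{Pt}_{X_0}\EE$; strong unitality forces it to be an isomorphism, and its inverse is exactly the division operation that upgrades the category to a groupoid. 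Some version of these two arguments is unavoidable, and your proposal contains neither.
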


\begin{proof}
The two subobjects commute in $\mathsf{Pt}_{X_0}\EE$ if and only if they have a cooperator $\phi\colon X_1\times_{X_0} X_1\rightarrow X_0\times X_1$, i.e.\ a morphism satisfying $\phi \cdot s_0=(d_1,1_{X_{1}})$, $\phi \cdot s_1=(d_0,1_{X_{1}})$, $p_{X_0}\cdot \phi = d_0d_2$ and $\phi \cdot s_1s_0=(1_{X_0},s_0)$:
$$
\xymatrix@=30pt{
	& X_1\times_{X_0} X_1 \ar[d]_{\phi} \ar@{}[dl]|(.25){}="A" \ar@<-1.5ex> "A";[dl]_-{d_2} \ar@{}[dr]^(.25){}="B" \ar@<1.5ex> "B";[dr]^{d_0}\\
	{X_1\;} \ar@{>->}[r]^-{(d_0,1_{X_{1}})} \ar@<-2ex>[rd]_-{d_0} \ar@<.5ex>[ur]_>>>>>>{s_1} & X_0\times X_1 \ar@<-1ex>[d]_-{p_{X_0}} & {\;X_1} \ar@{>->}[l]_-{(d_1,1_{X_{1}})} \ar@<2ex>[ld]^-{d_1} \ar@<-.5ex>[ul]^>>>>>>{s_0}\\
	& X_0 \ar@<1ex>[lu]_>>>>>>>>{s_0} \ar[u]_{(1_{X_{0}},s_0)} \ar@<-1ex>[ru]^>>>>>>>{s_0}
}
$$
where the whole quadrangle is the pullback which defines the internal object $X_1\times_{X_0} X_1$ of composable pairs of the reflexive graph. So the morphism $\phi$ is necessarily a pair of the form $(d_0d_2,d_1)$, where $d_1\colon X_1\times_{X_0} X_1\rightarrow X_1$ is such that $d_1s_0=1_{X_1},\; d_1s_1=1_{X_1}$. The incidence axioms: $d_0d_1=d_0d_0$, $d_1d_1=d_1d_2$ come by composition with the upper jointly extremally epic pair $(s_0,s_1)$. Accordingly this map $d_1$ produces a composition for the composable  pairs. Let us set $X_2=X_1\times_{X_0} X_1$. In order to check the associativity we need the following pullback which defines $X_3$ as the internal objects of `triples of composable morphisms':
$$\xymatrix{
	X_3  \ar@<-2pt>[r]_(.6){d_0} \ar@<-2pt>[d]_{d_3} & X_2  \ar@<-2pt>[l]_(.4){s_0} \ar@<-2pt>[d]_{d_2} \\
	X_2   \ar@<-2pt>[r]_{d_0} \ar@<-2pt>[u]_{s_2} & X_1 \ar@<-2pt>[l]_{s_0} \ar@<-2pt>[u]_{s_1} }
$$
The composition map $d_1$ induces a unique couple of maps $(d_1,d_2)\colon X_3 \rightrightarrows X_2$ such that
$d_0d_1=d_0d_0$, $d_2d_1=d_1d_3$ and $d_0d_2=d_1d_0$, $d_2d_2=d_2d_3$. The associativity axiom is given by the remaining simplicial axiom: 
(3) $d_1d_1=d_1d_2$.
The checking of this axiom comes with composition with the pair $(s_0,s_2)$ of the previous diagram since it is jointly extremally epic as well.

Conversely, the composition morphism $d_1\colon X_1\times_{X_0} X_1\rightarrow X_1$ of an internal category satisfies $d_1s_0=1_{X_1},\; d_1s_1=1_{X_1}$ and consequently produces the cooperator $\phi=(d_0d_2,d_1)$. Whence 1) $\Leftrightarrow$ 2).

It is clear that $3)\Rightarrow 2)$. It remains to check $2)\Rightarrow 3)$. Starting with an internal category, consider the following diagram in the fibre $\mathsf{Pt}_{X_0}\EE$:
$$
\xymatrix{
	X_1\times_{X_0} X_1 \ar[r]^{(d_0,d_1)} \ar@<-1ex>[rd]_{d_0} & \Eq[d_0] \ar@<-4pt>[r]_(.6){d_1} \ar@<-4pt>[d]_{d_0} & X_1  \ar@<-4pt>[l]_(.4){s_1} \ar@<-4pt>[d]_{d_0} \ar@(u,u)[ll]_{s_1} \\
	&	X_1   \ar@<-4pt>[r]_{d_0} \ar@<-4pt>[u]_{s_0} \ar[ul]_{s_0} & X_0 \ar@<-4pt>[l]_{s_0} \ar@<-4pt>[u]_{s_0} }
$$
First let us show that $(d_0,d_1)$ is a monomorphism, namely that the composition is right cancelable. So, consider the relation $H$ on $(\Eq[d_0]\cap \Eq[d_1])\times X_1$ (where $\Eq[d_0]\cap \Eq[d_1]$ is the object of `parallel maps' in the internal category) defined  by $(\alpha,\beta)H\gamma$ if $d_0(\alpha)=d_1(\gamma)$ and $\alpha\cdot\gamma=\beta\cdot\gamma$. The following diagram shows that $(\alpha,\beta)H1_{d_1(\gamma)}$, namely that $\alpha=\beta$, as soon as $\alpha\cdot\gamma=\beta\cdot\gamma$:
\[\xymatrix@C=2pc@R=1,5pc{
	(\alpha,\beta) \ar@{.>}[rd] \ar[r] & \gamma\\
	(1_{d_1(\gamma)},1_{d_1(\gamma)}) \ar[r]\ar[ru] 	& 1_{d_1(\gamma)}}\]
Accordingly $(d_0,d_1)\colon X_1\times_{X_0} X_1 \into \Eq[d_0]$ produces a reflexive (due to the $s_0$) and right punctual  (commutation of the $s_1$) relation on the object $(d_0,s_0)\colon X_1\splito X_0$ in the strongly unital fibre $\mathsf{Pt}_{X_0}\EE$. Accordingly it is an isomorphism and the internal category is an internal groupoid.
\end{proof}

\subsection{Base-change characterization}

The next characterization is dealing with the base-change functor along split epimorphisms with respect to the fibration of points.

\begin{definition}
Given a split epimorphic pair of functors $(T,G)\colon  \EE \splito \EE'$, $(T\circ G=1_{\EE'})$, this pair is called \emph{correlated} (resp. \emph{strongly correlated}) on monomorphisms when, given any monomorphism $m\colon Z \into G(X)$ in $\EE$, the morphism $GT(m)\colon GT(Z)\to G(X)$ factorizes through $m$ (resp. factorizes through $m$ via an isomorphism).
\end{definition}

\begin{lemma}
	Given a split epimorphic  pair of functors $(T,G)$, if it is correlated on monomorphisms, then  any monomorphism $m\colon  Z\into G(X)$ such that $T(m)$ is an isomorphism is itself an isomorphism. When $\EE$ is finitely complete and $T$ is left exact, we have the converse.
\end{lemma}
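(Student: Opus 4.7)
The claim has two implications, each short enough to plan in detail.

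For the forward direction, assume $(T,G)$ is correlated on monomorphisms and let $m\colon Z\into G(X)$ be a monomorphism with $T(m)$ an isomorphism. Correlation furnishes a (necessarily unique, since $m$ is monic) factorization $\tilde f\colon GT(Z)\to Z$ with $m\circ\tilde f=GT(m)$. Since $T(m)$ is an isomorphism and $G$ preserves isomorphisms, $GT(m)$ is an isomorphism too. Hence $\tilde f\circ GT(m)^{-1}$ is a right inverse of $m$, so $m$ is a split epimorphism. A monomorphism that is split epic is an isomorphism, and we are done.

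For the converse, assume $\EE$ is finitely complete, $T$ is left exact, and that every monomorphism in $\EE$ sent by $T$ to an isomorphism is itself an isomorphism. Given any monomorphism $m\colon Z\into G(X)$, form in $\EE$ the pullback
$$\xymatrix@C=3pc@R=2pc{P \ar@{ >->}[r]^-{\pi_Z} \ar@{ >->}[d]_-{\pi} & Z \ar@{ >->}[d]^-{m}\\ GT(Z) \ar[r]_-{GT(m)} & G(X).}$$
The map $\pi\colon P\into GT(Z)$ is a monomorphism because it is the pullback of the monomorphism $m$. The strategy is to produce the required factorization of $GT(m)$ through $m$ by showing that $\pi$ is an isomorphism; the factorization will then be $\pi_Z\circ\pi^{-1}$.

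Apply the left exact functor $T$ to the pullback square. Since $T\circ G=1_{\EE'}$ on both objects and morphisms, $TGT(Z)=T(Z)$ and $TGT(m)=T(m)$, so the resulting pullback square in $\EE'$ reads
$$\xymatrix@C=3pc@R=2pc{T(P) \ar[r]^-{T(\pi_Z)} \ar[d]_-{T(\pi)} & T(Z) \ar[d]^-{T(m)}\\ T(Z) \ar[r]_-{T(m)} & X.}$$
This is the kernel pair of $T(m)$. But $T$ preserves monomorphisms (being left exact), so $T(m)$ is monic and its kernel pair is trivial: both projections, in particular $T(\pi)$, are isomorphisms. Hence $\pi\colon P\into GT(Z)$ is a monomorphism in $\EE$ with $G(T(Z))$ as codomain such that $T(\pi)$ is an isomorphism. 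The hypothesis forces $\pi$ to be an isomorphism, and the proof is complete.

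The only subtlety worth flagging is the bookkeeping at the level of functors: one must use $TG=1_{\EE'}$ both on objects and on morphisms to identify the image under $T$ of the pullback square as the kernel pair of $T(m)$; everything else is formal.
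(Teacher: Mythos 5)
Your proof is correct and follows essentially the same route as the paper's: the forward direction via $GT(m)$ being an isomorphism so that $m$ is split epic and hence invertible, and the converse by pulling back $GT(m)$ along $m$, applying the left exact $T$, and observing that the resulting square is the (trivial) kernel pair of the monomorphism $T(m)$. Your write-up merely spells out in more detail why $T(\bar m)$ is an isomorphism, which the paper leaves implicit.
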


\begin{proof}
Suppose the pair is correlated and $T(m)$ an isomorphism. Then, so is $GT(m)$ and the monomorphism $m$ is a split epimorphism as well. Accordingly it is an isomorphism.
When $T$ is left exact and $m\colon Z \into G(X)$ a monomorphism, the map $T(m)$ is a monomorphism as well. Now consider the following pullback in $\EE$:
\[\xymatrix@C=2pc@R=1,5pc{
	P \ar@{ >->}[d]_{\bar m} \ar[r]^{} & Z\ar@{ >->}[d]^{m} \\
	GT(Z) \ar[r]_{GT(m)} & G(X)}\]
It is preserved by $T$ so that $T(\bar m)$ is an isomorphism. Under our assumption, so is $\bar m$; and $GT(m)$ factorizes through $m$. 
\end{proof}

If $f\colon Y \to X$ is a morphism in a finitely complete category~$\EE$, we denote by $f^*\colon \mathsf{Pt}_X\EE \to \mathsf{Pt}_Y\EE$ the base-change functor obtained by pullback along $f$. If $\EE$ is pointed, we denote by $\alpha_X$ (resp. $\tau_X$) the unique morphism $0\to X$ (resp. $X \to 0$) for any object $X$. From the above lemma, it is easy to check that, \emph{given a finitely complete pointed category $\EE$, it is unital if and only if for any object $X$ the split epimorphic pair of base-change functors $(\alpha_X^*,\tau_X^*)\colon  \mathsf{Pt}_X\EE \splito \mathsf{Pt}_0\EE \cong \EE$ is correlated on monomorphisms}. A bit more difficult is the same characterization dealing with strongly unital categories  and strongly correlated pairs $(\alpha_X^*,\tau_X^*)$; for that see~\cite{Bourn1} and the second assertion of the following:

\begin{proposition}
Let $(T,G)\colon  \EE \splito \EE'$ be a split epimorphic pair of functors with $\EE$ finitely complete and $T$ left exact. If it is correlated on monomorphisms, then the faithful functor $G$ is full as well. It is strongly correlated if and only if the functor $G$ is saturated on subobjects.
\end{proposition}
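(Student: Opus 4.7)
The plan is to deduce the first assertion directly from the preceding lemma, and then to unwrap the equivalence in the second assertion as a routine translation between strong correlation and saturation on subobjects, using the identity $TG=1_{\EE'}$ together with the left exactness of $T$.

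For fullness of $G$, I would start with an arbitrary morphism $h\colon G(X) \to G(Y)$ in $\EE$ and set $f:=T(h)\colon X \to Y$. The natural candidate for a preimage of $h$ under $G$ is $f$ itself, so I would form the equalizer $e\colon E \into G(X)$ of the pair $(h, G(f))$. Since $T$ is left exact it preserves this equalizer, and one has $T(h)=f$ while $T(G(f))=TGT(h)=f$; hence $T(e)$ equalizes $f$ with itself and is therefore an isomorphism. The preceding lemma, which applies because $(T,G)$ is correlated on monomorphisms, then promotes the monomorphism $e$ (with $T(e)$ iso) to an isomorphism, yielding $h=G(f)$.

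For the second assertion, the forward direction is immediate. Given $m\colon Z \into G(X)$, strong correlation provides an isomorphism $i\colon GT(Z) \to Z$ with $m\circ i = GT(m) = G(T(m))$. Setting $\bar m := T(m)\colon T(Z) \into X$, which is a monomorphism since $T$ is left exact, exhibits $m$ as isomorphic, as a subobject of $G(X)$, to the $G$-image $G(\bar m)$ of a monomorphism in $\EE'$; hence $G$ is saturated on subobjects. For the converse, given a monomorphism $m\colon Z \into G(X)$, saturation produces $\bar m\colon \bar Z \into X$ in $\EE'$ and an isomorphism $i\colon Z \to G(\bar Z)$ with $G(\bar m)\circ i = m$. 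Applying $T$ yields $T(m) = \bar m \circ T(i)$, and then applying $G$ gives $GT(m) = G(\bar m) \circ GT(i)$. The candidate isomorphism witnessing strong correlation is $j := i^{-1} \circ GT(i)\colon GT(Z) \to Z$, which is an isomorphism because $T(i)$ and hence $GT(i)$ are; and a direct check gives $m \circ j = G(\bar m) \circ GT(i) = GT(m)$.

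The main point requiring care is in the first part: one must verify that $T(e)$ is genuinely an isomorphism (not merely a monomorphism) so that the preceding lemma truly applies. The converse of the second part is then essentially bookkeeping, the only subtlety being to assemble $j$ from $i$ and $GT(i)$ in precisely the right order so that the triangle $m \circ j = GT(m)$ commutes. Once this is set up cleanly, no further machinery beyond $TG=1_{\EE'}$ and the left exactness of $T$ is needed.
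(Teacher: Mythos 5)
Your proof is correct and follows essentially the same route as the paper: fullness via the equalizer of $(h,GT(h))$, whose image under the left exact $T$ is an isomorphism, combined with the preceding lemma; and the equivalence with saturation via the isomorphism $i^{-1}\circ GT(i)$, which is exactly the paper's $\gamma^{-1}\cdot GT(\gamma)$. No gaps.
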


\begin{proof}
Suppose we have a map $f\colon G(U)\to G(V)$, then take the equalizer $j\colon  W\into G(U)$ of the pair $(f,GT(f))$. The functor $T$ being left exact, its image $T(j)$ is an isomorphism. Accordingly $j$ is itself an isomorphism, and $f=GT(f)$. So, $G$ is full. When the pair $(T,G)$ is strongly correlated and $m\colon Z \into G(X)$ is a monomorphism, then $n=T(m)$ is the monomorphism whose image by $G$ is isomorphic to $m$. Conversely, if $G$ is saturated on subobjects, starting with a monomorphism $m\colon Z \into G(X)$, denote by $\gamma\colon  Z\to G(W)$ the isomorphism such that $m\cong G(n)$ for a monomorphism $n\colon W \into X$. Then the map $\gamma^{-1}\cdot GT(\gamma)\colon GT(Z)\to Z$ produces the desired isomorphism  making the pair $(T,G)$ strongly correlated.
\end{proof}

Whence, now, the third characterization:

\begin{theorem}\label{theo34}\cite{Bourn1}
	Let $\EE$ be a finitely complete category. It is Mal'tsev if and only if, given any split epimorphism $(f,s)$ in $\EE$, the (left exact) base-change functor $f^*$ with respect to the fibration of points is saturated on subobjects (and consequently full).
\end{theorem}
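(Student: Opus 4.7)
The plan is to reduce the theorem to Theorem~\ref{theounit} (specifically the equivalence 3) $\iff$ 4)) and to the preceding Proposition characterizing saturation on subobjects via strongly correlated base-change pairs.

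The key preparatory observation is that, for any object $X\in\EE$, the fibre $\mathsf{Pt}_X\EE$ is pointed with zero object $(X\xrightarrow{1_X}X)$, and for any object $(Y,f,s)$ in it, the canonical morphisms are $\alpha_{(Y,f,s)}=s\colon (X,1_X,1_X)\to (Y,f,s)$ and $\tau_{(Y,f,s)}=f\colon (Y,f,s)\to(X,1_X,1_X)$. There are then natural equivalences
$$\mathsf{Pt}_0(\mathsf{Pt}_X\EE)\;\cong\;\mathsf{Pt}_X\EE \qquad\text{and}\qquad \mathsf{Pt}_{(Y,f,s)}(\mathsf{Pt}_X\EE)\;\cong\;\mathsf{Pt}_Y\EE,$$
the latter identifying a split epimorphism $(A,g,t)\splito (Y,f,s)$ in $\mathsf{Pt}_X\EE$ with its underlying split epimorphism $A\splito Y$ in $\EE$ (the data $g=fh$ and $t=us$ being forced). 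Under these equivalences, the base-change functors $\alpha^*_{(Y,f,s)}$ and $\tau^*_{(Y,f,s)}$ computed in the pointed category $\mathsf{Pt}_X\EE$ correspond exactly to the base-change functors $s^*\colon \mathsf{Pt}_Y\EE\to\mathsf{Pt}_X\EE$ and $f^*\colon\mathsf{Pt}_X\EE\to\mathsf{Pt}_Y\EE$ in $\EE$.

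With this in hand, I would proceed as follows. First, note that $(s^*,f^*)$ is a split epimorphic pair of functors since $s^*\circ f^*=(fs)^*=1^*_X=1_{\mathsf{Pt}_X\EE}$, and $s^*$ is left exact as a pullback functor. Applying the characterization of strongly unital categories via strongly correlated $(\alpha^*,\tau^*)$ pairs to the pointed category $\mathsf{Pt}_X\EE$, one obtains: $\mathsf{Pt}_X\EE$ is strongly unital if and only if for every split epimorphism $(f,s)\colon Y\splito X$ in $\EE$, the pair $(s^*,f^*)$ is strongly correlated on monomorphisms. By the Proposition preceding the theorem, this is in turn equivalent to $f^*$ being saturated on subobjects (and consequently full). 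Letting $X$ vary, every fibre $\mathsf{Pt}_X\EE$ is strongly unital if and only if for every split epimorphism $(f,s)$ in $\EE$, the functor $f^*$ is saturated on subobjects. Invoking the equivalence 3) $\iff$ 4) of Theorem~\ref{theounit} converts the former condition into $\EE$ being Mal'tsev, which closes the argument.

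The main technical step — and the only thing that is not a direct invocation of already-established results — is the careful verification of the equivalences $\mathsf{Pt}_{(Y,f,s)}(\mathsf{Pt}_X\EE)\cong\mathsf{Pt}_Y\EE$ and $\mathsf{Pt}_0(\mathsf{Pt}_X\EE)\cong\mathsf{Pt}_X\EE$ and the matching of the intrinsic base-change functors of the pointed category $\mathsf{Pt}_X\EE$ with those of $\EE$. This is essentially a bookkeeping exercise, checking that the pullbacks in $\mathsf{Pt}_X\EE$ are created from pullbacks in $\EE$ along the underlying morphisms $s$ and $f$. Once this compatibility is recorded, the theorem follows by combining the listed ingredients.
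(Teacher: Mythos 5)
Your proposal is correct and follows essentially the same route as the paper's proof: reduce via Theorem~\ref{theounit} (3 $\Leftrightarrow$ 4) to strong unitality of the fibres, translate that into the strong correlation of the pairs $(s^*,f^*)$, and conclude by the proposition relating strongly correlated pairs to saturation on subobjects. The only difference is that you spell out the identifications $\mathsf{Pt}_0(\mathsf{Pt}_X\EE)\cong\mathsf{Pt}_X\EE$ and $\mathsf{Pt}_{(Y,f,s)}(\mathsf{Pt}_X\EE)\cong\mathsf{Pt}_Y\EE$ that the paper leaves implicit, which is a welcome clarification rather than a change of method.
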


\begin{proof}
By Theorem~\ref{theounit}, the category $\EE$ is Mal'tsev if and only if any fibre $\mathsf{Pt}_Y\EE$ is strongly unital, which is the case if and only if, for any split epimorphism $(f,s)$, the pair $(s^*,f^*)$ is strongly correlated. According to the last proposition, it is the case if and only if $f^*$ is saturated on subobjects. 
\end{proof}

This last characterization is also important because, in the regular context, we shall show that it could be extended to any regular epimorphism $f$ in $\EE$ (see Theorem~\ref{theorem change of base saturated regular}).

\section{Stiffly and naturally Mal'tsev categories}\label{section Stiffly and naturally Mal'tsev categories}

There are obviously two extremal situations satisfied by Mal'tsev categories: any equivalence relations $R$ and $S$ on a same object centralize each other and the only pairs $(R,S)$ of equivalence relations centralizing each other are the ones such that $R\cap S=\Delta_X$.

The following notion was introduced by P. Johnstone in~\cite{Johnstone} (more precisely, he defined this notion via the equivalent formulation~\emph{2} in Theorem~\ref{equivalent-nat} below):
\begin{definition}\cite{Johnstone}
A Mal'tsev category is \emph{naturally Mal'tsev} if $[R,S]=0$ for any equivalence relations $R$ and $S$ on a same object.
\end{definition}

Any finitely complete additive category is naturally Mal'tsev. So is the subcategory $\mathsf{Aff}(\EE)$ of any Mal'tsev category $\EE$.

\begin{definition}
We say that a Mal'tsev category is \emph{stiffly Mal'tsev} if for any equivalence relations $R$ and $S$ on a same object, $[R,S]=0$ if and only if $R\cap S=\Delta_X$.
\end{definition}

The categories $\mathsf{BoRg}$ of boolean rings and $\mathsf{CNRg}$ of commutative von Neumann regular rings are examples of stiffly Mal'tsev categories.
The variety $\mathsf{Heyt}$ of Heyting algebras being a stiffly Mal'tsev category, so are the dual $\mathsf{Set}^{op}$ of the category of sets, and more generally the dual $\EE^{op}$ of any elementary topos $\EE$ in view of the left exact conservative functor $\EE^{op} \to \mathsf{Heyt}(\EE)$~\cite{BB}.

Clearly the two notions are stable under slicing and coslicing. For the next respective characterizations, we need the following:

\begin{definition}
A \emph{linear category} is a unital category where any object is commutative. A \emph{stiffly unital category} is a unital category in which the $0$ object is the unique commutative object.
\end{definition}

When $\EE$ is unital, the subcategory $\mathsf{Com}(\EE)$ is linear. The category $\mathsf{BoSRg}$ of boolean semi-rings is stiffly unital.

\begin{theorem}\cite{BB}
	Given any Mal'tsev category $\EE$, the following statements are equivalent:\\
	1) $\EE$ is a stiffly Mal'tsev category;\\
	2) the only internal groupoids are the equivalence relations;\\
	3) any fibre $\mathsf{Pt}_Y\EE$ is stiffly unital;\\
	4) the only abelian equivalence relations are the discrete ones.
\end{theorem}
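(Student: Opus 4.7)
The plan is to establish all four equivalences through the hub of condition~4), by proving $1\Leftrightarrow 4$, $3\Leftrightarrow 4$ and $2\Leftrightarrow 4$. For $1\Leftrightarrow 4$: taking $S=R$ in the definition of stiffly Mal'tsev gives $1\Rightarrow 4$ at once, since $[R,R]=0$ then forces $R=R\cap R=\Delta_X$. For $4\Rightarrow 1$, two successive applications of Proposition~\ref{stab} (parts~(a) and~(b)) to the inclusions $R\cap S\subseteq R$ and $R\cap S\subseteq S$ yield $[R\cap S,R\cap S]=0$ from $[R,S]=0$; condition~4) then gives $R\cap S=\Delta_X$, while the converse implication $R\cap S=\Delta_X\Rightarrow [R,S]=0$ is the unlabelled corollary of Section~3 that holds in any Mal'tsev category.

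For $3\Leftrightarrow 4$: every fibre $\mathsf{Pt}_Y\EE$ is a pointed Mal'tsev category (equivalently, strongly unital Mal'tsev), and it is a standard fact in this setting that the commutative objects are precisely those carrying an internal Mal'tsev operation, i.e.\ the (pointed) affine objects of Proposition~\ref{maladd}. Given an abelian equivalence relation $R$ on $X$ with connector $p$, the formula
\[
\pi\bigl((x,y_1),(x,y_2),(x,y_3)\bigr) \;=\; \bigl(x,\,p(y_1,y_2,y_3)\bigr)
\]
defines a Mal'tsev operation on $\Upsilon_R$ over $X$: since $xRy_i$ for $i=1,2,3$ forces $y_1Ry_2Ry_3$ the right-hand side is defined, and it lies in $R$ by transitivity. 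Hence $\Upsilon_R$ is commutative in $\mathsf{Pt}_X\EE$; and since $R=\Delta_X$ is equivalent to $\Upsilon_R$ being the zero object of that fibre, condition~3) forces $R=\Delta_X$, giving $3\Rightarrow 4$. For $4\Rightarrow 3$, a commutative $(A,f,s)$ in $\mathsf{Pt}_Y\EE$ comes with a Mal'tsev operation $A\times_Y A\times_Y A\to A$, which is exactly a connector on $(\Eq[f],\Eq[f])$; hence $\Eq[f]$ is abelian, so by~4) $\Eq[f]=\Delta_A$, making $f$ monic, hence an isomorphism (being a split epi), and $(A,f,s)\cong (1_Y,1_Y)$.

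For $2\Leftrightarrow 4$: for $4\Rightarrow 2$, any internal groupoid $X_1\rightrightarrows X_0$ admits a connector on $(\Eq[d_0],\Eq[d_1])$ by Example~\ref{exGroupoid}, so $[\Eq[d_0],\Eq[d_1]]=0$; the intersection $\Eq[d_0]\cap\Eq[d_1]=\Eq[(d_0,d_1)]$ sits inside both relations, hence by Proposition~\ref{stab} it is abelian. By~4) this intersection equals $\Delta_{X_1}$, so $(d_0,d_1)$ is a monomorphism and the resulting reflexive relation is an equivalence relation by the Mal'tsev hypothesis. For the main direction $2\Rightarrow 4$, given an abelian $R$ on $X$ with connector $p$, I would build the reflexive graph with $X_0=X$, $X_1=R$, $d_0=d_1=d_0^R\colon R\to X$ and $s_0=s_0^R\colon X\to R$, equipped with the composition (defined on arrows sharing their common source $x$)
\[
c\bigl((x,y),(x,z)\bigr) \;=\; \bigl(x,\,p(y,x,z)\bigr).
\]
The unit laws reduce to the Mal'tsev axioms $p(x,x,z)=z$ and $p(y,x,x)=y$, while the associativity of $c$ reduces to $p(p(y,x,z),x,w)=p(y,x,p(z,x,w))$, which is the full associativity of the connector from Proposition~\ref{associative=left+right} and the remark following Definition~\ref{connector}. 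This produces an internal category, hence an internal groupoid by Proposition~\ref{intcat}. By~2), it must be a relation; but $(d_0,d_1)=(d_0^R,d_0^R)$ factors through $\Delta_X\subseteq X\times X$, so its being monic is equivalent to $R=\Delta_X$.

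The main obstacle is the construction in $2\Rightarrow 4$: one must manufacture, out of the bare connector $p$, an internal groupoid whose pair of legs is forced to collapse unless $R$ is discrete. The ``loop groupoid'' above, obtained by reinterpreting every element $(x,y)\in R$ as an endo-arrow at the base point $x$ composed via $p$, accomplishes exactly this.
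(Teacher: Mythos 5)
Your proof is correct, but it is organized quite differently from the paper's, which establishes the single cycle $1)\Rightarrow 2)\Rightarrow 3)\Rightarrow 4)\Rightarrow 1)$: there, $1)\Rightarrow 2)$ reads $\Eq[d_0]\cap\Eq[d_1]=\Delta_{X_1}$ directly off the stiffly Mal'tsev hypothesis; $2)\Rightarrow 3)$ observes that a commutative object of $\mathsf{Pt}_Y\EE$ is a monoid in the fibre, i.e.\ an internal category with $d_0=f=d_1$, hence a groupoid by Proposition~\ref{intcat}, hence by~2) a relation, forcing $f$ to be an isomorphism; $3)\Rightarrow 4)$ notes that $[R,R]=0$ exactly when $(d_0^R,s_0^R)$ carries a commutative monoid structure in $\mathsf{Pt}_X\EE$; and $4)\Rightarrow 1)$ coincides with yours. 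Your hub-and-spoke arrangement around condition~4) obliges you to prove the harder implications $2)\Rightarrow 4)$ and $3)\Rightarrow 4)$ outright, and your ``loop groupoid'' with composition $c\bigl((x,y),(x,z)\bigr)=\bigl(x,p(y,x,z)\bigr)$ is, in substance, the paper's commutative monoid structure on $\Upsilon_R$ regarded as a one-object internal category; the two arguments thus converge on the same key object, but yours makes the multiplication, its well-definedness, and its associativity (via the full associativity of connectors) explicit where the paper only asserts the equivalence. What your version buys is this concreteness and clean biconditionals; what the cycle buys is economy, each implication being allowed to be the easy one. One point deserves attention: in $4)\Rightarrow 3)$ you pass from a commutative object of the strongly unital fibre to a Mal'tsev operation on it, which requires inverses, i.e.\ the fact that commutative objects of a strongly unital category are abelian group objects. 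The paper never states this; it obtains the inverses precisely through its $2)\Rightarrow 3)$ step via Proposition~\ref{intcat}. You should either cite that fact explicitly from~\cite{BB} or run the same internal-groupoid argument to manufacture the inverses before writing down the operation $a-b+c$; the rest of your argument needs no repair.
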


\begin{proof}
1) $\Rightarrow$ 2): Consider any internal groupoid:
$\xymatrix@C=1cm{ X_1 \ar@<1.2ex>[r]^{d_0}\ar@<-1.2ex>[r]_{d_1} & X_0 \ar[l]|(.50){s_0} }$.
We have $[\Eq[d_0],\Eq[d_1]]=0$ and thus $\Eq[d_0]\cap \Eq[d_1]=\Delta_{X_1}$. So, $(d_0,d_1)\colon X_1\into X_0\times X_0$ is a monomorphism, and the groupoid is an equivalence relation.\\
2) $\Rightarrow$ 3): A split epimorphism
$(f,s)\colon  X\splito Y$  is a commutative object in $\mathsf{Pt}_Y\EE$, when it is endowed with a monoid structure in this fibre, namely when it is an internal category structure with $d_0=f=d_1$. According to Proposition~\ref{intcat} it is a groupoid. So $(f,f)$ and thus $f$ are monomorphisms. Being a split epimorphism as well, $f$ is an isomorphism, i.e.\ a $0$ object in $\mathsf{Pt}_Y\EE$.\\
3) $\Rightarrow$ 4): Given any equivalence relation $R$ on $X$, we have $[R,R]=0$ if and only if the split epimorphism $(d_0^R,s_0^R)\colon  R\splito X$ is endowed with a commutative monoid structure in the fibre $\mathsf{Pt}_X\EE$. So, $d_0^R$ is an isomorphism and we get $R\cong \Delta_X$.\\
4) $\Rightarrow$ 1): If we have $[R,S]=0$, we get $[R\cap S,R\cap S]=0$; so $R\cap S=\Delta_X$ and we get~1).
\end{proof}

\begin{theorem}\label{equivalent-nat}\cite{Johnstone,Bourn1}
	Given any Mal'tsev category $\EE$, the following statements are equivalent:\\
	1) $\EE$ is a naturally Mal'tsev category;\\
	2) any object $X$ is endowed with a natural Mal'tsev operation $p_X$;\\
	3) any fibre $\mathsf{Pt}_Y\EE$ is linear;\\
	4) any fibre $\mathsf{Pt}_Y\EE$ is additive;\\
	5) any reflexive graph is endowed with a groupoid structure.
\end{theorem}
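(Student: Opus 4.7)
The plan is to establish the equivalences via a cycle of implications, with only the last step being genuinely nontrivial. For $(1) \Rightarrow (2)$, taking $R = S = \nabla_X$ shows every $X$ is affine, so Proposition~\ref{maladd} provides a unique associative and commutative internal Mal'tsev operation $p_X$ with which any morphism between affine objects commutes; hence $p$ is natural. For $(2) \Rightarrow (4)$, naturality forces $f \cdot p_X = p_Y \cdot (f \times f \times f)$ for every split epimorphism $(f,s)\colon X \splito Y$, so $p_X$ descends to a morphism $X \times_Y X \times_Y X \to X$ over $Y$; applying naturality to $p_X$ itself shows $p_X$ is autonomous, hence associative and commutative by Proposition~\ref{proposition p autonomous}. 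Thus $(f,s)$ is a pointed affine object in the (pointed, strongly unital) Mal'tsev fibre $\mathsf{Pt}_Y\EE$, which Corollary~\ref{aff} identifies with an internal abelian group; so $\mathsf{Pt}_Y\EE$ is additive.

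The implication $(4) \Rightarrow (3)$ is immediate, since every internal abelian group underlies a commutative monoid. For $(3) \Rightarrow (1)$, I would use the earlier characterization that $[R,S]=0$ is equivalent to the cooperation of the subobjects $\rho_{R^\circ}$ and $\rho_S$ in $\mathsf{Pt}_X\EE$: in a linear fibre, any pair of morphisms with a common codomain cooperates via the commutative monoid structure on the codomain, so $[R,S]=0$ holds unconditionally. For $(3) \Rightarrow (5)$, Proposition~\ref{intcat} reduces the existence of a groupoid structure on a reflexive graph $X_1 \rightrightarrows X_0$ to the cooperation of two specific subobjects in $\mathsf{Pt}_{X_0}\EE$, which is again automatic in a linear fibre.

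The main obstacle is $(5) \Rightarrow (2)$. Given $X \in \EE$, I apply (5) to the reflexive graph $X\times X\times X \rightrightarrows X$ with $d_0 = \pi_0$, $d_1 = \pi_2$ and $s_0 = \Delta_3$, obtaining by Lemma~\ref{subreflexive} a unique groupoid structure with composition $m$, and I set
\[
  p_X(a,b,c) \;:=\; \pi_1\bigl( m((a,a,b),(b,c,c)) \bigr).
\]
The groupoid identity axioms applied to the arrows $(a,a,c)$ and $(a,c,c)$ unwind to $p_X(a,c,c) = a$ and $p_X(a,a,c) = c$, giving the two Mal'tsev identities. Naturality of $p$ in $X$ is then automatic: for any $f\colon X \to Y$, the map $f\times f\times f$ is a morphism of the corresponding reflexive graphs between groupoids, and so, by the fullness of $\mathsf{Grpd}(\EE) \into \mathsf{RG}(\EE)$ recorded in Lemma~\ref{subreflexive}, it is a morphism of groupoids and commutes with $m$. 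Chaining $(5) \Rightarrow (2) \Rightarrow (4) \Rightarrow (3) \Rightarrow (1)$ then closes all the equivalences.
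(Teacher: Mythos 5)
Your proof is correct, but it reorganizes the equivalences differently from the paper and replaces its treatment of condition 5) by a self-contained construction. The paper's route is $1)\Leftrightarrow 2)$ (via Proposition~\ref{maladd}) followed by $1)\Rightarrow 5)\Rightarrow 4)\Rightarrow 3)\Rightarrow 2)$: it gets $1)\Rightarrow 5)$ for free from Example~\ref{exGroupoid} (a groupoid structure on a reflexive graph is exactly a connector between $\Eq[d_0]$ and $\Eq[d_1]$, and in a naturally Mal'tsev category every such pair is connected), gets $5)\Rightarrow 4)$ by viewing a split epimorphism as a reflexive graph with $d_0=d_1$, whose groupoid structure is then an internal group structure in the fibre, and closes the loop by reading the commutative monoid structure on $(p_0^Y,s_0^Y)$ in a linear fibre as a ternary operation whose unit axioms are the Mal'tsev identities. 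You instead run $1)\Rightarrow 2)\Rightarrow 4)\Rightarrow 3)\Rightarrow 1)$ and attach 5) by $3)\Rightarrow 5)$ (via Proposition~\ref{intcat}) and $5)\Rightarrow 2)$. Your $5)\Rightarrow 2)$, using the reflexive graph $\pi_0,\pi_2\colon X^3\rightrightarrows X$ and extracting $p_X$ as the middle component of the composite $m((a,a,b),(b,c,c))$, is the genuinely novel ingredient: it is an explicit derivation of the natural Mal'tsev operation from the groupoid condition that the paper never needs to perform, and your naturality argument via the fullness of $\mathsf{Grpd}(\EE)\into\mathsf{RG}(\EE)$ recorded in Lemma~\ref{subreflexive} is exactly right. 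Your $3)\Rightarrow 1)$ (in a linear category every pair of maps with common codomain cooperates through the commutative monoid structure of the codomain, hence $[R,S]=0$ always) is also more direct than the paper's $3)\Rightarrow 2)$. The only point worth spelling out in your $2)\Rightarrow 4)$ is that additivity of $\mathsf{Pt}_Y\EE$ requires not just that each object $(f,s)$ carries an internal abelian group structure (which your restriction of $p_X$ to $X\times_YX\times_YX$ together with Corollary~\ref{aff} does provide), but also that every morphism of the fibre is a group homomorphism; this follows from the last assertions of Propositions~\ref{maladd} and~\ref{monab}, so it is a one-line addition rather than a gap.
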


\begin{proof}
We have [1) $\Leftrightarrow$ 2)] with the connector $p_X\colon X\times X \times X \to X$ associated with the centralization $[\nabla_X,\nabla_X]=0$, see Proposition~\ref{maladd}.
Now, $[1)\Rightarrow 5)]$ is straightforward. We get $[5)\Rightarrow 4)]$ since any split epimorphism $(f,s)\colon X\splito Y$ is a particular reflexive graph, which, by~5), gives to $(f,s)$ an internal group structure in the fibre $\mathsf{Pt}_Y\EE$. Now $[4)\Rightarrow 3)]$ is a consequence of the fact that any finitely complete additive category is  linear.
Suppose~3) and consider the split epimorphism $(p_0^Y,s_0^Y)\colon Y\times Y\to Y$. It has a monoid structure in the fibre $\mathsf{Pt}_Y\EE$, which is a ternary operation $p_Y\colon Y\times Y\times  Y \to Y$ in $\EE$. It satisfies the unit axioms which, for $p_Y$, turn to be exactly the two Mal'tsev axioms in~$\EE$.
\end{proof}

\section{Regular Mal'tsev categories}\label{section regular Maltsev categories}

An equivalence relation is called \emph{effective} when it is the kernel pair of some morphism. A map $f$ is said to be a \emph{regular epimorphism} when $f$ is the coequalizer of two parallel arrows in $\EE$. Recall from~\cite{Barr}:

\begin{definition}
	A finitely complete category $\mathbb E$ is \emph{regular} when:\\
	(a) regular epimorphisms are stable under pullbacks;\\
	(b) any effective equivalence relation $\Eq[f]$ has a coequalizer.\\
	It is \emph{exact} when, in addition: \\
	(c) any equivalence relation in $\EE$ is effective. 
\end{definition}

Any variety $\VV$ of universal algebras is an exact category. Given any map $f$ in a regular category the quotient $q_f$ of the kernel equivalence relation $\Eq[f]$ produces a canonical decomposition $f=m\cdot q_f$ where $m$ is a monomorphism~\cite{Barr}. Given any regular epimorphism $g\colon X\onto Y$ and any equivalence relation $R$ on~$X$, the canonical decomposition of the map $R\into X\times X \stackrel{g\times g}{\onto} Y\times Y$ produces a reflexive relation $S$ on $Y$. When $\EE$ is a Mal'tsev category, it is an equivalence relation we shall denote by $g(R)$. Now we have the following:

\begin{proposition}\label{regstab}\cite{BG3}
	Let $\EE$ be a regular Mal'tsev category and $g\colon X\onto Y$ a regular epimorphism. If $(R,S)$ is a pair of centralizing equivalence relations on $X$, then the equivalence relations $g(R)$ and $g(S)$ also centralize each other. In particular, when $X$ is an affine object, so is $Y$.
\end{proposition}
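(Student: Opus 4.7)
The strategy is to transport the given connector $p\colon R\times_X S \to X$ along $g$ to produce a connector $\bar p\colon g(R)\times_Y g(S) \to Y$ witnessing $[g(R),g(S)]=0$. The image factorizations of the composites $R\into X\times X \xrightarrow{g\times g} Y\times Y$ and $S\into X\times X \xrightarrow{g\times g} Y\times Y$ produce regular epimorphisms $\pi_R\colon R\onto g(R)$ and $\pi_S\colon S\onto g(S)$; since $\EE$ is a regular Mal'tsev category, the reflexive relations $g(R)$ and $g(S)$ are automatically equivalence relations, as recalled just before the statement.

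The crux is to verify that the canonical comparison morphism $\phi\colon R\times_X S \to g(R)\times_Y g(S)$ between the two pullbacks is itself a regular epimorphism. I would factor $\phi$ as $R\times_X S \to g(R)\times_Y S \to g(R)\times_Y g(S)$. The commutative square with horizontal arrows $d_1^R\colon R\onto X$ and $d_1^{g(R)}\colon g(R)\onto Y$ and vertical regular epimorphisms $\pi_R$ and $g$ (the sections being provided by reflexivity of $R$ and $g(R)$) is, by Corollary~\ref{regpush1} applied in $\EE$, a regular pushout; hence the canonical factorization $R\onto g(R)\times_Y X$ is a regular epimorphism, and pulling back along $d_0^S\colon S\to X$ yields that $R\times_X S \onto g(R)\times_Y S$ is regular epic. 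The analogous argument applied to the square built from $\pi_S$ and $g$ shows that $g(R)\times_Y S \onto g(R)\times_Y g(S)$ is a regular epimorphism, whence so is $\phi$.

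With $\phi$ known to be a regular epimorphism, I would next show that $g\circ p\colon R\times_X S \to Y$ coequalizes the kernel pair of $\phi$, which by the universal property of regular epimorphisms produces the unique factorization $g\circ p=\bar p\circ \phi$. The required equality $g(p(x_1Ry_1Sz_1)) = g(p(x_2Ry_2Sz_2))$ for two triples with identical image under $\phi$ follows from the partial Mal'tsev identities for $p$ combined with the centralization of $R$ and $S$ by $\Eq[g]$, itself a consequence of the Mal'tsev property of $\EE$ (via Theorem~\ref{theounit}). The axioms of Definition~\ref{connector} for $\bar p$ are then deduced from the corresponding axioms for $p$ by precomposing with the (regular, hence epic) morphism $\phi$ and its analogues on the iterated pullbacks used to formulate left and right associativity.

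The main obstacle is the step producing the regular-epi property of $\phi$: one must locate the correct cube of pullbacks on which Corollary~\ref{regpush1} can be brought to bear, and without the Mal'tsev hypothesis this step fails in general, as does the conclusion. The final sentence of the statement then follows by specialization: taking $R=S=\nabla_X$ and noting that $g(\nabla_X)=\nabla_Y$ because $g$ is a regular epimorphism, the first part yields $[\nabla_X,\nabla_X]=0\Rightarrow[\nabla_Y,\nabla_Y]=0$, so affineness is inherited by $Y$.
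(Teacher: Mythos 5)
The paper itself gives no proof of this proposition (it defers to~\cite{BG3}), so your attempt has to stand on its own. Your overall skeleton is the right one, and the first half is essentially correct: the comparison $\phi\colon R\times_X S\to g(R)\times_Y g(S)$ is indeed a regular epimorphism, obtained by factoring through $g(R)\times_Y S$ and pulling back regular pushouts. One small repair there: the square with horizontal arrows $\pi_R\colon R\onto g(R)$ and $g\colon X\onto Y$ has \emph{regular} (not split) epimorphisms horizontally, so the relevant statement is Corollary~\ref{regpush2}, not Corollary~\ref{regpush1}; with that citation the step is fine (and the second factor $g(R)\times_Y S\to g(R)\times_Y g(S)$ is even simpler, being a pullback of the regular epimorphism $\pi_S$ over $Y$).

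The genuine gap is in the descent step, which is where the whole content of the proposition lies. You need $g\circ p$ to coequalize $\Eq[\phi]$, i.e.\ that $x_i\,\Eq[g]\,x'_i$ componentwise for two triples of $R\times_X S$ forces $p(x_1,y_1,z_1)\,\Eq[g]\,p(x_2,y_2,z_2)$. You justify this by invoking ``the centralization of $R$ and $S$ by $\Eq[g]$, itself a consequence of the Mal'tsev property of $\EE$ (via Theorem~\ref{theounit})''. That is false: in a Mal'tsev category an arbitrary equivalence relation does \emph{not} centralize $\Eq[g]$ --- Theorem~\ref{theounit} and its corollary only give $[R,S]=0$ when $R\cap S=\Delta_X$, and $R\cap\Eq[g]$ is not trivial here; the condition you are assuming ($[\nabla_X,\nabla_X]=0$ in the extreme case $g=\tau_X$) is the \emph{naturally} Mal'tsev condition, which fails already in $\mathsf{Gp}$. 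Nor does the compatibility follow formally from the partial Mal'tsev identities: the obvious reduction of $\Eq[\phi]$ to ``one coordinate at a time'' moves breaks down because the intermediate triples need not lie in $R\times_X S$, and even the special case ($x_1=x_2$, $y_1=y_2$, $z_1\,\Eq[g]\,z_2$) already requires a nontrivial argument. The correct proofs of this compatibility use either a carefully chosen relation whose difunctionality is exploited through the degenerate instances supplied by $p(x,x,z)=z$ and $p(x,y,y)=x$ (the pattern of the relations $H$, $K$, $L$ in the paper), or the direct image of the double relation $R\,\square\, S$ together with its section $\sigma(x,y,z)=(x,y,p(x,y,z),z)$, as in~\cite{BG3}. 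Until that step is actually carried out, the construction of $\bar p$ is not established. The remaining parts (the Mal'tsev identities for $\bar p$ by precomposition with $\phi$ and its restrictions, the automatic upgrade to a connector via the uniqueness proposition, and the specialization $R=S=\nabla_X$ with $g(\nabla_X)=\nabla_Y$ for the affine statement) are all sound once $\bar p$ exists.
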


When, moreover, the category $\EE$ is finitely cocomplete, we can produce the commutator of any pair of equivalence relations:

\begin{proposition}\cite{Bourn12, BB}
	Let $\EE$ be a finitely cocomplete regular Mal'tsev category. If $(R,S)$ is a pair of equivalence relations on $X$, there is a universal regular epimorphism $\psi\colon X\onto Y$ such that we get $[ \psi(R),\psi(S)]=0$. In particular the inclusion $\mathsf{Aff}(\EE)\into \EE$ has a left adjoint.
\end{proposition}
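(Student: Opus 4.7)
The plan is to exhibit the universal regular epimorphism $\psi$ as the initial object of the class $\mathcal{C}(R,S)$ of regular epimorphisms $\psi'\colon X\onto Y$ with $[\psi'(R),\psi'(S)]=0$, and then to specialize to $R=S=\nabla_X$ to obtain the left adjoint of the inclusion $\mathsf{Aff}(\EE)\into\EE$.

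The first step is to check that $\mathcal{C}(R,S)$ is closed under binary meets. Given $\psi_1\colon X\onto Y_1$ and $\psi_2\colon X\onto Y_2$ in $\mathcal{C}(R,S)$, I would factor the pairing $(\psi_1,\psi_2)\colon X\to Y_1\times Y_2$ as a regular epimorphism $\psi\colon X\onto Y$ followed by a monomorphism $m\colon Y\into Y_1\times Y_2$. By Proposition~\ref{stab}(c), the product relations $\psi_1(R)\times\psi_2(R)$ and $\psi_1(S)\times\psi_2(S)$ centralize each other on $Y_1\times Y_2$; Proposition~\ref{stab}(d) propagates this centralization along $m$; and since $\psi(R)\subseteq m^{-1}(\psi_1(R)\times\psi_2(R))$ and $\psi(S)\subseteq m^{-1}(\psi_1(S)\times\psi_2(S))$, Proposition~\ref{stab}(b) yields $[\psi(R),\psi(S)]=0$. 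Thus $\psi\in\mathcal{C}(R,S)$.

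The second step is to produce the minimum of $\mathcal{C}(R,S)$. By well-poweredness, the collection of effective equivalence relations on $X$ is a set, so the kernels of the elements of $\mathcal{C}(R,S)$ form an essentially small collection which, by the previous step, is closed under binary intersections. A transfinite descending iteration that, at each successor step, replaces $\psi_\alpha$ by the binary meet $\psi_\alpha\wedge\psi'$ for some $\psi'\in\mathcal{C}(R,S)$ not yet factoring through $\psi_\alpha$, produces a strictly decreasing chain of effective equivalence relations which must stabilize by the set-size bound. The stable value is the desired initial object of $\mathcal{C}(R,S)$, i.e.\ the universal regular epimorphism. The main obstacle I anticipate is the handling of limit ordinals in this iteration: one must show that the accumulated large meets of effective equivalence relations remain effective and remain in $\mathcal{C}(R,S)$, exploiting finite cocompleteness to form the relevant quotients and the regular factorization to control the image relations.

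For the final assertion, apply this construction to $R=S=\nabla_X$. Since $\psi\colon X\onto Y$ is a regular epimorphism and, in any regular category, products of regular epimorphisms are regular epimorphisms, we have $\psi(\nabla_X)=\nabla_Y$; the condition $[\psi(\nabla_X),\psi(\nabla_X)]=0$ thus becomes $[\nabla_Y,\nabla_Y]=0$, i.e.\ $Y\in\mathsf{Aff}(\EE)$. Conversely, any morphism $f\colon X\to A$ with $A$ affine factors as a regular epi $X\onto\mathrm{Im}(f)$ followed by a monomorphism into $A$; by Corollary~\ref{aff}, affine objects are closed under subobjects, so $\mathrm{Im}(f)$ is itself affine, whence the regular epi $X\onto\mathrm{Im}(f)$ belongs to $\mathcal{C}(\nabla_X,\nabla_X)$ and therefore factors through $\psi$. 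This is the universal property of the reflection, so $\mathsf{Aff}(\EE)\into\EE$ admits a left adjoint.
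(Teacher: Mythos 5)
There is a genuine gap at the heart of your argument. Your binary-meet step is correct (the factorization of $(\psi_1,\psi_2)$ through $Y_1\times Y_2$ combined with parts (b), (c), (d) of Proposition~\ref{stab} is exactly the right computation), and your derivation of the left adjoint from the first assertion is also fine. But the passage from "closed under binary meets" to "has a minimum" does not go through. First, well-poweredness is not among the hypotheses: $\EE$ is only assumed to be a finitely cocomplete regular Mal'tsev category, so you have no right to assert that the effective equivalence relations on $X$ form a set. Second, and more seriously, the limit-ordinal step that you flag as "the main obstacle" is not a technicality but the entire content of the theorem, and it cannot be filled with the stated hypotheses: at a limit stage you would need the infinite intersection of the accumulated kernels to exist, to be effective (i.e.\ to admit a quotient), and to have its quotient still lie in $\mathcal{C}(R,S)$. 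The first two require infinite limits or exactness-type assumptions not available from finite cocompleteness and regularity, and the third would require running your centralization argument through an infinite product $\prod_\alpha Y_\alpha$, which again does not exist in general. So the intersection-of-all-solutions strategy does not close.

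The proof in the cited sources (\cite{Bourn12, BB}) is a direct one-step construction, which also explains why only \emph{finite} cocompleteness is assumed. A connector for $(\psi(R),\psi(S))$ is a morphism defined on the image of $R\times_X S$ subject to the identities $p\cdot\sigma_0^R=d_0^R$ and $p\cdot\sigma_1^S=d_1^S$; one therefore forms the finite colimit $Q$ of the zig-zag $X \xleftarrow{d_0^R} R \xrightarrow{\sigma_0^R} R\times_X S \xleftarrow{\sigma_1^S} S \xrightarrow{d_1^S} X$ (the two outer copies of $X$ being identified), whose universal property encodes precisely the existence of such a partial operation after composing with the canonical map $X\to Q$. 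Taking the regular image $\psi\colon X\onto Y$ of this comparison map and using that regular images preserve centralizing pairs (Proposition~\ref{regstab}) together with the uniqueness of connectors in the Mal'tsev context, one obtains $[\psi(R),\psi(S)]=0$ and the universal property in one stroke, with no transfinite process and no smallness assumptions. I recommend you replace the transfinite descent by this colimit construction; your closure-under-meets computation and your deduction of the reflection onto $\mathsf{Aff}(\EE)$ can be kept as they stand.
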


\begin{remark}
Given two equivalence relations $R$ and $S$ on a same object in a finitely cocomplete regular Mal'tsev category, the \emph{commutator} $[R,S]$ of $R$ and $S$ can be defined as the kernel equivalence relation $Eq[\psi]$ of the morphism $\psi\colon X\onto Y$ from the above proposition.
\end{remark}

\begin{remark}
The Mal'tsev context being the right conceptual one to deal with the notion of centrality of equivalence relations, it is not unexpected to observe that it is also the right context to deal with nilpotency as well~\cite{BergerB}.
\end{remark}

In the regular context we get the following observations and characterization:

\begin{lemma}\label{transal}
	Let $\EE$ be a regular Mal'tsev category and the following diagram be any pullback of a split epimorphism $f$ along a regular epimorphism $q$:
	$$
	\xymatrix{
		X'  \ar@{->>}[r]^{q'} \ar@<-2pt>[d]_{f'} & X  \ar@<-2pt>[d]_f\\
		Y'   \ar@{->>}[r]_q \ar@<-2pt>[u]_{s'} & Y  \ar@<-2pt>[u]_s }
	$$
	Then the upward square is a pushout.
\end{lemma}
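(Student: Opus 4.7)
The plan is to verify the pushout property through a coequalizer argument, appealing to the characterization of Mal'tsev categories via saturation of base-change functors on subobjects (Theorem~\ref{theo34}).

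Given morphisms $h\colon X'\to Z$ and $g\colon Y\to Z$ with $hs'=gq$, I would first observe that $q'$, being the pullback of the regular epimorphism $q$ in a regular category, is itself a regular epimorphism, hence the coequalizer of its kernel pair $\Eq[q']$. A factorization $k\colon X\to Z$ with $kq'=h$ is then unique if it exists (as $q'$ is epic), and automatically satisfies $ks=g$ since $ksq=kq's'=hs'=gq$ and $q$ is epic. Thus the proof reduces to showing $\Eq[q']\subseteq\Eq[h]$; setting $E:=\Eq[h]\cap\Eq[q']$, the remaining task is to prove $E=\Eq[q']$.

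The core of the argument will recognise $E$ via base-change along the split epimorphism $(f',s')$. Since the original square is a pullback, a direct computation using $X'\cong Y'\times_YX$ exhibits $\Eq[q']=X'\times_XX'$ as canonically isomorphic, in the fibre $\mathsf{Pt}_{X'}\EE$, to $(f')^*(\Eq[q])$, where $(f')^*\colon \mathsf{Pt}_{Y'}\EE\to\mathsf{Pt}_{X'}\EE$ denotes the base-change functor. Viewed in $\mathsf{Pt}_{X'}\EE$, the equivalence relation $E$ becomes a subobject of $(f')^*(\Eq[q])$, so by the saturation of $(f')^*$ on subobjects (Theorem~\ref{theo34}) one obtains a subequivalence relation $F\subseteq\Eq[q]$ on $Y'$ with $E=(f')^*(F)$.

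To identify $F$, the hypothesis $hs'=gq$ gives, for any $(y'_1,y'_2)\in\Eq[q]$, $h(s'(y'_1))=g(q(y'_1))=g(q(y'_2))=h(s'(y'_2))$; coupled with $q's'=sq$, this shows $(s'\times s')(\Eq[q])\subseteq E$. Applying $f'\times f'$ to this inclusion and using $(f'\times f')\circ(s'\times s')=1_{\Eq[q]}$ then forces $\Eq[q]\subseteq F$, while the reverse inclusion is automatic. Hence $F=\Eq[q]$ and $E=(f')^*(\Eq[q])=\Eq[q']$, producing the required factorization. The main technical obstacle will be the identification $\Eq[q']\cong(f')^*(\Eq[q])$ in the fibre $\mathsf{Pt}_{X'}\EE$, which demands careful unpacking of the pullback structure of $X'$ and of the split-epimorphism structure on the kernel pairs.
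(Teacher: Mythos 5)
Your proof is correct, but it follows a genuinely different route from the paper's. The paper completes the square with the kernel pairs $\Eq[q']\rightrightarrows X'$ and $\Eq[q]\rightrightarrows Y'$, observes that the resulting squares over $f'$ are pullbacks of split epimorphisms, and then invokes the unital-fibre characterization of Mal'tsev categories (condition~1 of Theorem~\ref{theounit}) to conclude that the pair of sections $(\Eq(s'),s_0^{q'})$ is jointly extremally epic, hence jointly epic; the coequalizing condition $\phi\, d_0^{q'}=\phi\, d_1^{q'}$ is then checked in two lines by composing with these two maps (trivially for the diagonal, and via $\phi s'=\sigma q$ for $\Eq(s')$). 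You instead form $E=\Eq[h]\cap\Eq[q']$, identify $\Eq[q']$ with $(f')^*(\Eq[q])$ in $\mathsf{Pt}_{X'}\EE$, descend $E$ along $(f')^*$ using the saturation on subobjects of Theorem~\ref{theo34}, and show the descended subobject $F\into\Eq[q]$ is split epic (hence invertible) because $\Eq(s')$ factors through $E$ and $\Eq(f')\cdot\Eq(s')=1_{\Eq[q]}$. All the steps check out: $f'$ is indeed a split epimorphism so Theorem~\ref{theo34} applies, $E$ is reflexive so it is a genuine subobject in the fibre, and Theorem~\ref{theo34} is established before this lemma so there is no circularity (one small quibble: saturation only hands you a subobject $F$ of $\Eq[q]$ in $\mathsf{Pt}_{Y'}\EE$, not a priori a subequivalence relation, but your argument never uses more than that). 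The trade-off is that the paper's argument is shorter and rests only on the elementary joint epimorphicity of the two sections in a pullback of split epimorphisms, whereas yours routes through the heavier base-change characterization, which is itself derived from that same unitality; on the other hand your version makes explicit the descent-of-subobjects point of view that reappears in the proof of Theorem~\ref{theorem change of base saturated regular}.
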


\begin{proof}
Consider any pair $(\phi,\sigma)$ of morphisms such that $\phi \cdot s'=\sigma \cdot q\;(*)$:
$$\xymatrix@=6pt{
	\Eq[q'] \ar@<-1ex>[ddd]_{\Eq(f')} \ar@<-1ex>[rrrr]_<<<<<<<<<{d_{1}^{q'} }\ar@<1ex>[rrrr]^<<<<<<<<<{d_{0}^{q'}} &&&&  {X'\;}  \ar[llll]  \ar@<-1ex>[ddd]_{f'}  \ar@{->>}[rrrr]^{q'}\ar[rrrrrd]_{\phi}  &&&&  {X\;}  \ar@<-1ex>[ddd]_{f}  \\
	&   &&&&   & & &  & T   && \\
	&&& &&&&&&\\
	\Eq[q] \ar[uuu]_{\Eq(s')} \ar@<-1ex>[rrrr]_{d_{1}^q} \ar@<1ex>[rrrr]^{d_{0}^q}  &&&&  Y' \ar[llll] \ar@{->>}[rrrr]_{q}  \ar[uuu]_{s'}
	&& && Y   \ar@<-1ex>[uur]_{\sigma} \ar[uuu]_{s}
}
$$
and complete the diagram by the kernel pairs $\Eq[q]$ and $\Eq[q']$ which produce the left hand side pullbacks. The morphism $q'$ being a regular epimorphism, it is the quotient of its kernel pair $\Eq[q']$. We shall obtain the desired factorization $X\to T$ by showing that $\phi$ coequalizes the pair $(d_0^{q'},d_1^{q'})$. The left hand side squares being pullbacks, this can be done by composition with the jointly extremal pair $(\Eq(s'),s_0^{q'}), \;{\rm with} \; s_0^{q'}\colon X'\into \Eq[q']$ the diagonal giving the reflexivity of $\Eq[q']$. This is trivial for the composition with $s_0^{q'}$, and a consequence of the equality $(*)$ for the composition with $\Eq(s')$.
\end{proof}

\begin{theorem}\label{theorem change of base saturated regular}\cite{Bourn1}
	Given any regular category $\EE$, it is a Mal'tsev category if and only if any base-change functor $q^*$ with respect to the fibration of points along a regular epimorphism $q$ is fully faithful and saturated on subobjects. 	
\end{theorem}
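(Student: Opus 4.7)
The plan is to derive the right-to-left implication from Theorem~\ref{theo34} and to build the left-to-right implication on top of Lemma~\ref{transal} together with the connector produced by the centralization corollary following Proposition~\ref{maladd}.

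For the ``if'' direction, every split epimorphism is a regular epimorphism (being the coequalizer of its kernel pair), so the hypothesis applied to split epimorphisms recovers exactly the conclusion of Theorem~\ref{theo34}, whence $\EE$ is Mal'tsev.

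Conversely, assume $\EE$ is a regular Mal'tsev category and fix a regular epimorphism $q\colon Y'\onto Y$. For any object $(f\colon X\to Y,s)$ of $\mathsf{Pt}_Y\EE$, write $(f'\colon X'\to Y',s')$ for its base change and $q'\colon X'\onto X$ for the pullback projection, which is a regular epimorphism by regularity of $\EE$. To obtain fullness of $q^*$, take objects $X_1,X_2$ in $\mathsf{Pt}_Y\EE$ and a morphism $h\colon q^*X_1\to q^*X_2$ in $\mathsf{Pt}_{Y'}\EE$; Lemma~\ref{transal} makes the square
$$\xymatrix{q^*X_1\ar@{->>}[r]^-{q_1'} & X_1 \\ Y'\ar@{->>}[r]_q \ar[u]^{s_1'} & Y \ar[u]_{s_1}}$$
a pushout. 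The compatibility $q_2'\cdot h\cdot s_1'=q_2'\cdot s_2'=s_2\cdot q$ then produces a unique $\tilde h\colon X_1\to X_2$ with $\tilde h\cdot q_1'=q_2'\cdot h$ and $\tilde h\cdot s_1=s_2$; composing with the jointly epimorphic pushout pair $(q_1',s_1)$ yields $f_2\cdot\tilde h=f_1$ and $q^*(\tilde h)=h$. Faithfulness is immediate since $q_1'$ is an epimorphism.

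For saturation on subobjects, let $m\colon Z\into X'$ be a monomorphism in $\mathsf{Pt}_{Y'}\EE$ with structural split epimorphism $g_Z=f'\cdot m\colon Z\to Y'$ and section $t_Z$. Factor $q'\cdot m$ as a regular epimorphism $e\colon Z\onto W$ followed by a monomorphism $n\colon W\into X$. The equality $n\cdot e\cdot t_Z=q'\cdot m\cdot t_Z=q'\cdot s'=s\cdot q$ shows that $s\cdot q$ factors through $n$; pulling $n$ back along $s$ yields a monomorphism $s^*W\into Y$ through which the regular epimorphism $q$ factors, forcing $s^*W\cong Y$ and hence $s$ to factor through $n$ via a unique $t_W\colon Y\to W$. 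This endows $W$ with a canonical structure in $\mathsf{Pt}_Y\EE$, and the pair $(g_Z,e)$ factors through the pullback as a canonical morphism $\mu\colon Z\to q^*W$ with $q^*(n)\cdot\mu=m$, which is monic because $m$ is.

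The hard step is showing that $\mu$ is an isomorphism. Monicity of $\mu$ forces $\Eq[g_Z]\cap\Eq[e]=\Delta_Z$; in the Mal'tsev category $\EE$, the corollary following Proposition~\ref{maladd} then yields $[\Eq[g_Z],\Eq[e]]=0$, producing a connector $p\colon \Eq[g_Z]\times_Z\Eq[e]\to Z$. Form the pullback $P=Z\times_W q^*W$, whose second projection $P\onto q^*W$ is a regular epimorphism. A generalized element $(z,(y',w))$ of $P$ satisfies $qg_Zz=fnez=fnw=qy'$, so the triple $(z,t_Zg_Zz,t_Zy')$ lies in $\Eq[g_Z]\times_Z\Eq[e]$, and post-composing with $p$ defines a lift $\ell\colon P\to Z$ for which $\mu\cdot\ell$ equals the projection $P\onto q^*W$. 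Consequently $\mu$ is a regular epimorphism, and combined with being monic this forces $\mu$ to be an isomorphism, so $q^*(n)\cong m$, finishing the proof.
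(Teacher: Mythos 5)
Your proof is correct, and while the ``if'' direction and the fullness argument coincide with the paper's (both rest on Lemma~\ref{transal} and the epimorphy of the pullback projection), your treatment of saturation on subobjects takes a genuinely different route. The paper forms the kernel pair $\Eq[\bar q\cdot m]$ of the composite of $m$ with the pullback projection, uses the regular-pushout property (Corollary~\ref{regpush1}) to show that the comparison to the relevant pullback is simultaneously monic and extremally epic --- hence that one has a discrete fibration of equivalence relations --- and then invokes the Barr--Kock theorem together with the already-established fullness of $q^*$ to recognize $m$ as $q^*(n)$. You instead take the regular image $W$ of $q'\cdot m$ directly, transfer the point structure to $W$ by extremality of $q$, and prove by hand that the comparison $\mu=(g_Z,e)\colon Z\to q^*W$ is an isomorphism: it is monic because $m$ is, and you exhibit it as an extremal epimorphism by splitting the regular epimorphism $P=Z\times_W q^*W\onto q^*W$ through a connector for $(\Eq[g_Z],\Eq[e])$, whose existence is guaranteed by $\Eq[g_Z]\cap\Eq[e]=\Delta_Z$. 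This avoids Barr--Kock and the fullness step altogether and has a pleasant ``partial Mal'tsev operation'' flavour; the verifications are element-theoretic but legitimate via generalized elements, since only finite limits are involved, and the Mal'tsev input is ultimately the same in disguise, the corollary you invoke being itself derived from the unital-fibre argument underlying Corollary~\ref{regpush1}. Two small points to tidy up: the corollary asserting that $R\cap S=\Delta_X$ implies $[R,S]=0$ follows the proposition on the commutation of $(\rho_{R^{\circ}},\rho_S)$ in the fibre $\mathsf{Pt}_X\EE$, not Proposition~\ref{maladd}; and the membership of $(z,t_Zg_Zz,t_Zy')$ in $\Eq[g_Z]\times_Z\Eq[e]$ silently uses the identity $e\cdot t_Z=t_W\cdot q$, which you should record (it follows from $n\cdot e\cdot t_Z=s\cdot q=n\cdot t_W\cdot q$ and the fact that $n$ is monic).
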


\begin{proof}
Any split epimorphism being a regular one, the condition above implies that $\EE$ is a Mal'tsev category thanks to Theorem~\ref{theo34}. Let us show the converse. First notice that in any regular category, and  given any regular epimorphism $q$, the base-change $q^*$ is necessarily faithful. Suppose, in addition, that $\EE$ is a Mal'tsev category.\\
1) \emph{The functor $q^*$ is full}. Consider the following diagram:
$$\xymatrix@=9pt{
	{X'\;} \ar[rd]^{m'}   \ar@<-1ex>[ddd]_<<<<{f'}  \ar@{->>}[rrrr]^{q'}  &&&&  {X\;}  \ar@<-1ex>[ddd]_<<<<{f}  \\
	&   \bar X' \ar[ddl]_{\bar f'} \ar@{->>}[rrrr]^<<<<{\bar q}  & & &  & \bar X \ar[ddl]_{\bar f}  && \\
	&&& &&&&&&\\
	Y'  \ar@{->>}[rrrr]_{q}  \ar[uuu]_>>>>{s'} \ar@<-1ex>[uur]_{\bar s'} && && Y  \ar@<-1ex>[uur]_{\bar s} \ar[uuu]_>>>>{s}
}
$$
where the downward squares are pullbacks and $m'$ a morphism in $\mathsf{Pt}_{Y'}\EE$. According to the previous lemma the upward vertical square is a pushout; whence  a unique map $m\colon X\rightarrow \bar X$ such that $m\cdot q'=\bar q\cdot m'$ and $m\cdot s=\bar s$; we get also $\bar f\cdot m=f$ since $q'$ is a regular epimorphism, and $m$ is a map in the fibre $\mathsf{Pt}_Y\EE$ such that $q^*(m)=m'$.

\smallskip

2) \emph{The functor $q^*$ is saturated on subobjects}. First, any base-change functor $g^*$, being left exact, preserves monomorphisms. Consider now the following diagram where the right hand side quadrangle is a pullback and $m$ is a monomorphism in $\mathsf{Pt}_{Y}\EE$:
$$\xymatrix@=10pt{
	\Eq[\bar q\cdot m] \ar@<-1ex>[ddd]_{\Eq(f')} \ar@{>->}[rd]_{\Eq(m)} \ar@<-1ex>[rrrr]_<<<<<<<<{\delta_{1}}\ar@<1ex>[rrrr]^<<<<<<<<{\delta_{0}} &&&&  {X'\;} \ar@{>->}[rd]^{m} \ar[llll]  \ar@<-1ex>[ddd]_<<<<{f'}   &&&&   \\
	&  \Eq[\bar q] \ar[ddl] \ar@<-1ex>[rrrr]_<<<<{d_{1}^{\bar q}} \ar@<1ex>[rrrr]^<<<<{d_{0}^{\bar q}} &&&&  \bar X' \ar[llll]
	\ar[ddl]_{\bar f'} \ar@{->>}[rrrr]^-{\bar q}  & & &  & \bar X \ar[ddl]_{\bar f}  && \\
	&&& &&&&&&\\
	\Eq[q] \ar[uuu]_{\Eq(s')} \ar@<-1ex>[rrrr]_{d_{1}^q} \ar@<1ex>[rrrr]^{d_{0}^q} \ar@<-1ex>[uur] &&&&  Y' \ar[llll] \ar@{->>}[rrrr]_{q}  \ar[uuu]_>>>>{s'}
	\ar@<-1ex>[uur]_{\bar s'} && && Y   \ar@<-1ex>[uur]_{\bar s}
}
$$
Complete the diagram with the kernel pair $\Eq[\bar q\cdot m]$. The factorization $\Eq(m)$ is a monomorphism. In the Mal'tsev context, this implies that any of the left hand side commutative squares is a pullback: indeed, since $\Eq(m)$ is a monomorphism, it is also the case for the factorization $\tau$ of the left hand side square indexed by $0$ to the pullback of $(f',s')$ along the split epimorphism $(d_0^q,s_0^q)$; but it is an extremal epimorphism as well, since $\EE$ is a Mal'tsev category, by Condition 1 in Theorem~\ref{theounit}; so it is an isomorphism.

So, the following downward left hand side diagram is underlying a discrete fibration between equivalence relations.
Now, denote by $q'$ the quotient of the effective relation $\Eq[\bar q\cdot m]$, and by $(f,s)$ the induced split epimorphism.
$$\xymatrix@=7pt{
	\Eq[\bar q\cdot m] \ar@<-1ex>[ddd]_{\Eq(f')}  \ar@<-1ex>[rrrr]_<<<<<<<<{\delta_{1}}\ar@<1ex>[rrrr]^<<<<<<<<{\delta_{0}} &&&&  {X'\;} \ar[llll]  \ar@<-1ex>[ddd]_{f'}  \ar@{.>>}[rrrr]^{q'} &&&&  {X\;}  \ar@<-1ex>[ddd]_{f} \\
	&  &&&&   & & &  &  && \\
	&&& &&&&&&\\
	\Eq[q] \ar[uuu]_{\Eq(s')} \ar@<-1ex>[rrrr]_{d_{1}^q} \ar@<1ex>[rrrr]^{d_{0}^q}    &&&&  Y'  \ar[llll] \ar@{->>}[rrrr]_{q}  \ar[uuu]_{s'}
	&& && Y \ar[uuu]_{s} 
}
$$
By the so-called Barr-Kock Theorem~\cite{Barr, BG2}, the right hand side square is a  pullback in the regular category $\EE$. Since $q^*$ is full, $m$ determines a factorization $n\colon X\to \bar X$ in the fibre $\mathsf{Pt}_Y\EE$:
$$\xymatrix@=10pt{
	\Eq[\bar q\cdot m] \ar@<-1ex>[ddd]_{\Eq(f')} \ar@{>->}[rd]_{\Eq(m)} \ar@<-1ex>[rrrr]_<<<<<<<<{\delta_{1} }\ar@<1ex>[rrrr]^<<<<<<<<{\delta_{0}} &&&&  {X'\;} \ar@{>->}[rd]^{m} \ar[llll]  \ar@<-1ex>[ddd]_<<<<{f'}  \ar@{->>}[rrrr]^{q'} &&&&  {X\;}  \ar@<-1ex>[ddd]_{f}  \ar[rd]^{n}\\
	&  \Eq[\bar q] \ar[ddl] \ar@<-1ex>[rrrr]_<<<<{d_{1}^{\bar q}} \ar@<1ex>[rrrr]^<<<<{d_{0}^{\bar q}} &&&&  \bar X' \ar[llll]
	\ar[ddl]_{\bar f'} \ar@{->>}[rrrr]^<<<<{\bar q}  & & &  & \bar X \ar[ddl]_{\bar f}  && \\
	&&& &&&&&&\\
	\Eq[q] \ar[uuu]_{\Eq(s')} \ar@<-1ex>[rrrr]_{d_{1}^q} \ar@<1ex>[rrrr]^{d_{0}^q} \ar@<-1ex>[uur] &&&&  Y' \ar[llll] \ar@{->>}[rrrr]_{q}  \ar[uuu]_>>>>{s'}
	\ar@<-1ex>[uur]_{\bar s'} && && Y \ar[uuu]_{s}  \ar@<-1ex>[uur]_{\bar s}
}
$$
The upper right hand side quadrangle is a pullback since the two other right hand side commutative squares are so. Accordingly we get $m=q^*(n)$ and $n$ is a monomorphism since pulling back along regular epimorphisms reflects monomorphisms~\cite{Barr}. 
\end{proof}

From Corollary~\ref{regpush1}, we get another characterization:

\begin{corollary}\label{regpush2}
	A regular category $\EE$ is Mal'tsev if and only if any morphism in $\mathsf{Pt}(\EE)$ with horizontal regular epimorphisms
	\[\xymatrix@C=2,5pc@R=2pc{
		X' \ar@<-1ex>@{->>}[d]_{f'} \ar@{->>}[r]^{x} 
		& X \ar@{->>}[d]_{f} \\
		Y' \ar@{ >->}[u]_{s'} \ar@{->>}[r]_{y} & Y \ar@<-1ex>@{ >->}[u]_{s}  }\]
	is \emph{a regular pushout}, namely such that the factorization from $X'$ to the pullback of $f$ along $y$ is a regular epimorphism.
\end{corollary}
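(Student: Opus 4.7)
The plan is to extract both implications from material already established in this section: the converse is immediate from Corollary~\ref{regpush1}, while the direct implication relies on the saturation property of base-change along regular epimorphisms given by Theorem~\ref{theorem change of base saturated regular}.

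For the ``if'' direction, I would observe that a commutative square of split epimorphisms satisfying the compatibility conditions listed in Corollary~\ref{regpush1} is, in particular, a morphism in $\mathsf{Pt}(\EE)$ whose horizontal legs are split and therefore regular epimorphisms. Applying the hypothesis then yields a factorization $X'\to X\times_Y Y'$ which is a regular and \emph{a fortiori} extremal epimorphism, so Corollary~\ref{regpush1} concludes that $\EE$ is Mal'tsev.

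For the ``only if'' direction, assume $\EE$ is a regular Mal'tsev category and consider a morphism in $\mathsf{Pt}(\EE)$ with horizontal regular epimorphisms $x$ and $y$. Denote by $\phi\colon X'\to X\times_Y Y'$ the canonical factorization and write $\phi=m\cdot q$ for its regular epi-mono factorization in $\EE$; endowing the intermediate object with the split epimorphism to $Y'$ induced through $q$, this upgrades to a factorization in the fibre $\mathsf{Pt}_{Y'}\EE$, and $m$ becomes a subobject of $y^*(f)=X\times_Y Y'$ there. By Theorem~\ref{theorem change of base saturated regular}, the base-change functor $y^*\colon \mathsf{Pt}_Y\EE\to \mathsf{Pt}_{Y'}\EE$ is saturated on subobjects, so there exists a monomorphism $\bar m\colon \bar Z\into X$ in $\mathsf{Pt}_Y\EE$ with $m\cong y^*(\bar m)$. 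The defining pullback of $y^*(\bar m)$ yields a projection $\bar p\colon \bar Z\times_Y Y'\to \bar Z$ such that $p\cdot m=\bar m\cdot \bar p$, where $p\colon X\times_Y Y'\onto X$ is the projection. Hence $x=p\cdot\phi=p\cdot m\cdot q=\bar m\cdot(\bar p\cdot q)$; since $x$ is a regular and therefore extremal epimorphism and $\bar m$ is monic, $\bar m$ must be an isomorphism, whence so is $m=y^*(\bar m)$, and $\phi=m\cdot q$ is a regular epimorphism.

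The main obstacle lies in the forward direction: the key is to transport the monic part of the factorization of $\phi$ back along the fibration of points via saturation, so that the extremality of $x$ in the base category $\EE$ can actually be brought to bear. Without the saturation property from Theorem~\ref{theorem change of base saturated regular}, there is no reason for the monomorphism $m$ to collapse to an isomorphism. The converse is comparatively routine, amounting to the standard fact that split epimorphisms are regular epimorphisms and regular epimorphisms are extremal.
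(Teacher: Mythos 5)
Your proof is correct, but the forward direction follows a genuinely different route from the paper's. The paper proves that $\psi\colon X'\to X\times_Y Y'$ is a regular epimorphism by an elementary diagram chase: it completes the square with the horizontal kernel pairs $\Eq[x]\rightrightarrows X'$ and $\Eq[y]\rightrightarrows Y'$, applies Corollary~\ref{regpush1} to the resulting commutative square of split epimorphisms (the $d_0$'s being split by the reflexivity maps) to get a regular epimorphism $\chi$, observes that the comparison $\delta_1$ to the pullback of $f$ along $y$ is a regular epimorphism because it sits in a pullback over the regular epimorphism $x$, and concludes from $\psi\cdot d_1^x=\delta_1\cdot\chi$. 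The only inputs are the split-epimorphism case and pullback-stability of regular epimorphisms. You instead invoke the saturation on subobjects of the base-change functor $y^*$ along the regular epimorphism $y$ (Theorem~\ref{theorem change of base saturated regular}), descend the monic part $m$ of the (regular epi, mono) factorization of $\phi$ to a monomorphism $\bar m$ in $\mathsf{Pt}_Y\EE$, and kill $\bar m$ using the extremality of $x$. This is sound --- there is no circularity, since Theorem~\ref{theorem change of base saturated regular} is established independently of this corollary, and your verification that the intermediate object of the factorization lives in $\mathsf{Pt}_{Y'}\EE$ and that $m$ is a subobject of $y^*(f,s)$ there goes through. What each approach buys: yours is shorter and conceptually transparent once Theorem~\ref{theorem change of base saturated regular} is available (the regular pushout property is exactly the statement that no proper subobject of $y^*(f,s)$ can receive the comparison map), whereas the paper's is more economical in its prerequisites, deriving the regular-epi case directly from the split-epi case of Corollary~\ref{regpush1} without the heavier machinery (Barr--Kock, fullness of $y^*$) that underlies the saturation theorem.
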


\begin{proof}
Clearly if this condition holds, it holds in particular for horizontal split epimorphisms. Then the conclusion is given by Corollary~\ref{regpush1}. Conversely, suppose $\EE$ is a regular Mal'tsev category, and complete the square by the horizontal kernel equivalence relations:
$$\xymatrix@=9pt{
	\Eq[x] \ar@<-1ex>[ddd]_{\Eq(f')} \ar[rd]_{\chi} \ar[rrrr]_<<<<<<<<{d_{1}^x}\ar@<2ex>[rrrr]^<<<<<<<<{d_{0}^x} &&&&  {X'\;} \ar[rd]^{\psi} \ar@<-1ex>[llll] \ar@<1ex>[llld] \ar@{->>}@<1ex>[rrrr]^x \ar@<-1ex>[ddd]_>>>>>{f'}   &&&& X \ar[ddd]_{f} \\
	&  \bar P \ar[ddl] \ar@<-1ex>[rrrr]_<<<<<<<<{\delta_{1}} \ar[rrru]^<<<<<<<<{\delta_{0}} &&&&  P 
	\ar[ddl]_{} \ar@{->>}[rrru]^<<<<{\bar q}  & & &  &   && \\
	&&& &&&&&&\\
	\Eq[y] \ar[uuu]_(.55){\Eq(s')} \ar@<-1ex>[rrrr]_{d_{1}^y} \ar@<1ex>[rrrr]^{d_{0}^y} \ar@<-1ex>[uur] &&&&  Y' \ar[llll] \ar@{->>}[rrrr]_{y}  \ar[uuu]_<<<<<{s'}
	\ar@<-1ex>[uur]_{} && && Y   \ar@<-1ex>[uuu]_{s}
}
$$
Then denote by $\psi$ (resp. $\chi$) the factorization from $X'$ to the pullback of $f$ along $y$ (resp. from $\Eq[x]$ to the pullback of $f'$ along $d_0^y$). The map $\chi$ is a regular epimorphism according to Corollary~\ref{regpush1}. Moreover the quadrangle $x\cdot \delta_0 = \bar q \cdot \delta_1$ is a pullback, and, $\EE$ being regular, the factorization $\delta_1$ is a regular epimorphism, since so is $x$. Then the equality $\psi\cdot d_1^x=\delta_1\cdot\chi$ shows that $\psi$ is a regular epimorphism, since so is $\delta_1\cdot\chi$.
\end{proof}

As mentioned in the first section, in the case of a variety $\mathbb V$ of universal algebras, the Mal'tsev property can be expressed by a ternary term $p(x,y,z)$ satisfying the identities $p(x,y,y) = x$ and $p(x,x,y)=y$~\cite{Maltsev}. We shall prove the existence of such a term by adopting a categorical approach, first considered in~\cite{CP}, based on an interpretation of a suitable regular pushout lying in the full subcategory of free algebras.

\begin{theorem}{}\label{MaltseVar}
A variety $\mathbb V$ of universal algebras is a Mal'tsev category if and only if its algebraic theory has a ternary term $p$ satisfying the identities $p(x,y,y)= x$ and $p(x,x,y)=y$.
\end{theorem}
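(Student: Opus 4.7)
The direction $(\Leftarrow)$ is already essentially in hand: the chain of implications $\ref{ternary term p} \Rightarrow \ref{any relation is difunctional} \Rightarrow \ref{any reflexive relation is a congruence}$ from Theorem~\ref{CharacterizationM} and Lemma~\ref{reflexive+difunctional} shows that the existence of a Mal'tsev term forces $\mathbb V$ to be a Mal'tsev category. For the converse, assume $\mathbb V$ is Mal'tsev. Since any variety of universal algebras is exact, and in particular regular, the regular pushout characterization of Corollary~\ref{regpush2} is available, and I would apply it to a cleverly chosen square of free algebras in the spirit of~\cite{CP}, so that the Mal'tsev identities drop out of the surjectivity of the induced factorization to the pullback.

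Writing $F(V)$ for the free $\mathbb V$-algebra on a set $V$ of generators, the square I would use is
\[\xymatrix@C=3pc@R=2pc{F(x,y,z) \ar@{->>}[r] \ar@{->>}[d] & F(y,z) \ar@{->>}[d] \\ F(x,y) \ar@{->>}[r] & F(y)}\]
where the horizontal quotients collapse $x \mapsto y$ and the vertical quotients collapse $z \mapsto y$, with sections given by the obvious free-subalgebra inclusions (e.g.\ $F(y,z) \hookrightarrow F(x,y,z)$, and similarly for the others). All arrows are split epimorphisms, the compatibility conditions in Corollary~\ref{regpush2} are immediate, and both composites around the square send $t(x,y,z) \mapsto t(y,y,y)$. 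By the Mal'tsev hypothesis and Corollary~\ref{regpush2}, the canonical factorization $\phi\colon F(x,y,z) \to P$ into the pullback $P = F(x,y) \times_{F(y)} F(y,z)$ is a regular epimorphism, hence surjective in $\mathbb V$; explicitly, $\phi(t(x,y,z)) = (t(x,y,y),\, t(y,y,z))$.

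The pair $(x, z) \in F(x,y) \times F(y,z)$ lies in $P$, since both coordinates become $y$ in $F(y)$. Surjectivity of $\phi$ therefore supplies a term $p \in F(x,y,z)$ satisfying $p(x,y,y) = x$ and $p(y,y,z) = z$; these are identities in the theory of $\mathbb V$, and renaming the bound variables in the second yields the Mal'tsev equations $p(x,y,y) = x$ and $p(x,x,y) = y$. The principal obstacle is spotting the correct free-algebra square; once it has been written down, every remaining verification (compatibility of sections, explicit form of $\phi$, membership of $(x,z)$ in $P$) is a one-line check, and the Mal'tsev term appears directly as a $\phi$-preimage of $(x,z)$.
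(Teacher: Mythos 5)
Your proposal is correct and follows essentially the same route as the paper: the paper applies Corollary~\ref{regpush2} to the square $\nabla+1_X$, $1_X+\nabla$, $\nabla$, $\nabla$ on the free algebras $X+X+X$, $X+X$, $X$, which is exactly your square of free algebras up to renaming of generators, and extracts $p$ as a preimage of the pair of projection terms $(q,r)$ in the kernel pair of $\nabla$ --- your element $(x,z)$ of the pullback. The only cosmetic difference is that the paper writes the free algebras as coproducts of the free algebra on one generator rather than as $F(x,y)$, $F(y,z)$, etc.
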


\begin{proof}
In Theorem~\ref{CharacterizationM} it is shown that the existence of the Mal'tsev term $p$ implies that the variety is a Mal'tsev category. 

Conversely, assume that $\mathbb V$ is a Mal'tsev variety, and denote by $X$ the free algebra on one element, by $X+X$ the free algebra on two elements, and by $X+X+X$ the free algebra on three elements. If $\nabla \colon X + X \rightarrow X$ is the codiagonal, then
the following diagram commutes:
$$
\xymatrix@=20pt{X + X + X \ar[r]^-{\nabla + 1_X} \ar[d]_-{1_X + \nabla} & X + X \ar [d]^{\nabla} \\
X + X \ar[r]_{\nabla} & X
}
$$
This diagram is clearly a regular pushout by Corollary~\ref{regpush2}, so that the canonical factorization $\alpha \colon X + X + X \rightarrow \Eq[\nabla]$ to the kernel pair of~$\nabla$ in~$\mathbb V$ is a regular epimorphism, i.e. a surjective homomorphism. We can then choose the element $(q,r) \in \Eq[\nabla]$, where $q(x,y)= x$ and $r(x,y)= y$, and we know that there is a ternary term $p(x,y,z) \in X + X + X$ with $\alpha(p) = (q,r)$. 
Consider then the following commutative diagram  $$
\xymatrix@=30pt{X + X & X + X + X \ar[r]^-{\nabla + 1_X}  \ar@{.>}[d]_{\alpha}  \ar[l]_-{1_X + \nabla} & X +X \\
& \Eq[\nabla] \ar[ru]_{p_1}  \ar[lu]^{p_0} & 
}
$$ where $p_0$ and $p_1$ are the projections of the kernel pair $\Eq[\nabla]$. When applied to the term $p$, its commutativity exactly expresses the announced identities $p(x,y,y)= x$ and $p(x,x,y)=y$ for the term $p$.
\end{proof}

\begin{remark}
The categorical notion of \emph{regular pushout}, introduced in full generality in~\cite{Bourn3x3} in relationship with the $3\times 3$ Lemma, is also related to the notion of \emph{double extension}~\cite{Double}, that was first considered by G.\ Janelidze in the category of groups. This latter notion has turned out to play a central role in the theory of (higher) central extensions of an exact Mal'tsev category. Indeed, the possibility of inductively defining higher dimensional categorical Galois structures starting from a Birkhoff reflective subcategory of an exact Mal'tsev category also depends on the existence of double extensions and their higher versions (see~\cite{JK, EGV, Everaert, DG} and the references therein). For instance, the higher homology of groups, compact groups and crossed modules can be better understood from this categorical perspective, and many new computations can be made thanks to the characterizations of the higher central extensions relative to the higher dimensional Galois structures.
\end{remark}

\begin{remark}
The essence of the definition of regular categories is to capture the categorical properties of $\mathsf{Set}$ which concern finite limits and regular epimorphisms. This has been formalized by Barr's embedding theorem~\cite{Barr2} which claims that for any small regular category $\EE$ there exists a fully faithful left exact embedding into a presheaf category $\EE \hookrightarrow \mathsf{Set}^{\mathbb C}$ which preserves the regular epimorphisms. Since in a presheaf category limits and quotients are computed componentwise, with this embedding theorem it is enough to prove some statements about finite limits and regular epimorphisms in $\mathsf{Set}$ (i.e. using elements) in order to prove it in full generality for any regular category, see~\cite{BB} for more details. This embedding theorem has been extended to the regular Mal'tsev case in~\cite{jacqmin1}. An essentially algebraic (i.e. locally presentable) regular Mal'tsev category $\mathbb M$ is constructed such that any small regular Mal'tsev category $\EE$ admits a conservative left exact embedding $\EE \hookrightarrow {\mathbb M}^{\mathbb C}$ which preserves the regular epimorphisms. This category $\mathbb M$ is constructed via some partial operations and `approximate Mal'tsev operations'~\cite{approx}. In the same way as with Barr's embedding theorem, one can now reduce the proof of some statements about finite limits and regular epimorphisms in any regular Mal'tsev category to the particular case of $\mathbb M$ and thus use elements and (approximate) Mal'tsev operations. Similar embedding theorems also hold in the regular unital and strongly unital case, see~\cite{jacqmin2,jacqminthesis}. Using partial Mal'tsev operations, one also has an embedding theorem for (non necessarily regular) Mal'tsev categories~\cite{jacqmin3,jacqminthesis}.
\end{remark}

We now observe that, in any exact Mal'tsev category~$\EE$, the category $\mathsf{Cat}(\EE)= \mathsf{Grpd}(\EE)$ of internal categories (=internal groupoids) and internal functors inherits the exactness property from the base category $\EE$. The category $\mathsf{Cat}^n(\EE)$ of $n$-fold internal categories is defined by induction by $\mathsf{Cat}^1(\EE)=\mathsf{Cat}(\EE)$ and $\mathsf{Cat}^{n+1} (\EE) = \mathsf{Cat}(\mathsf{Cat}^n(\EE))$ for $n\ge1$.
 
\begin{theorem}\cite{Gran}
Let $\EE$ be an exact Mal'tsev category. Then: \begin{enumerate}
\item the category $\mathsf{Cat}(\EE)$ of internal categories in $\EE$ is exact Mal'tsev;
\item the category $\mathsf{Cat}^n(\EE)$ of $n$-fold internal categories in $\EE$ is exact Mal'tsev, for any $n\ge1$.
\end{enumerate}
\end{theorem}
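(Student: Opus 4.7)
The plan is to prove (1) directly and then obtain (2) by induction on $n$, since once $\mathsf{Cat}^n(\EE)$ is an exact Mal'tsev category, applying (1) to it yields $\mathsf{Cat}^{n+1}(\EE)=\mathsf{Cat}(\mathsf{Cat}^n(\EE))$ exact Mal'tsev. For (1), Proposition~\ref{intcat} gives $\mathsf{Cat}(\EE)=\mathsf{Grpd}(\EE)$, and all finite limits in this category are computed componentwise on objects and on arrows. The Mal'tsev property then follows from Lemma~\ref{conser} applied to the forgetful functor $U\colon \mathsf{Grpd}(\EE)\to\EE\times\EE$ sending $\mathbb X$ to $(X_0,X_1)$: it is left exact by componentwise computation of limits, and conservative because the pair of componentwise inverses of an isomorphism of groupoids automatically preserves source, target, identity and composition.

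For regularity, given a morphism $(f_0,f_1)\colon\mathbb X\to\mathbb Y$ in $\mathsf{Grpd}(\EE)$, I factor each component in $\EE$ as $f_i=m_i\cdot q_i$ with $q_i$ a regular epimorphism and $m_i$ a monomorphism. Since $m_1$ is a monomorphism and $f_0\cdot d_j^{X_1}=d_j^{Y_1}\cdot f_1$ for $j=0,1$, the map $q_0\cdot d_j^{X_1}$ coequalizes $\Eq[f_1]=\Eq[q_1]$ and therefore factors through $q_1$, producing a reflexive graph $(I_0,I_1)$ together with a monomorphism $(m_0,m_1)\colon(I_0,I_1)\hookrightarrow\mathbb Y$ in $\mathsf{RG}(\EE)$. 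By Lemma~\ref{subreflexive}, this sub-reflexive graph of the groupoid $\mathbb Y$ inherits a unique groupoid structure, giving the required image factorization in $\mathsf{Grpd}(\EE)$. Regular epimorphisms in $\mathsf{Grpd}(\EE)$ are then precisely the componentwise regular epis, and their pullback stability follows from componentwise computation of pullbacks together with pullback stability of regular epis in $\EE$.

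For exactness, let $\mathbb R\rightrightarrows\mathbb X$ be an equivalence relation in $\mathsf{Grpd}(\EE)$. Componentwise it is a pair of equivalence relations $R_i\rightrightarrows X_i$ in the exact category $\EE$, with coequalizers $q_i\colon X_i\twoheadrightarrow Q_i$. The structural maps of $\mathbb X$ descend uniquely to make $(Q_0,Q_1)$ a reflexive graph and $(q_0,q_1)$ a componentwise regular epimorphism. The groupoid structure on $\mathbb X$ is encoded via Example~\ref{exGroupoid} by the centralization $[\Eq[d_0^{X_1}],\Eq[d_1^{X_1}]]=0$, and Proposition~\ref{regstab} transports it along $q_1$ to the centralization of the direct images $q_1(\Eq[d_j^{X_1}])$, which I would identify with $\Eq[d_j^{Q_1}]$, equipping $(Q_0,Q_1)$ with a groupoid structure. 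Since kernel pairs are computed componentwise and $R_i=\Eq[q_i]$ by exactness of $\EE$, the relation $\mathbb R$ is the kernel pair of $(q_0,q_1)$, yielding effectiveness. I expect the main obstacle to be precisely the identification $q_1(\Eq[d_j^{X_1}])=\Eq[d_j^{Q_1}]$, which should follow from the regular pushout characterization of Corollary~\ref{regpush2} applied to the morphism of split epimorphisms $(q_0,q_1)\colon(d_j^{X_1},s_0^{X_1})\twoheadrightarrow(d_j^{Q_1},s_0^{Q_1})$, combined with the Barr--Kock theorem invoked in Section~\ref{section regular Maltsev categories}.
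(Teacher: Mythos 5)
Your proposal is correct and, for the Mal'tsev property and the construction of the (regular epi)--mono factorization, it coincides with the paper's argument: the same componentwise factorization, with Lemma~\ref{subreflexive} providing the groupoid structure on the middle reflexive graph (the paper uses the conservative left exact forgetful functor to $\mathsf{RG}(\EE)$ rather than to $\EE\times\EE$ for the Mal'tsev part, but this is immaterial). Where you genuinely diverge is on exactness: the paper simply refers to Theorem~3.2 of~\cite{Gran} for the fact that every equivalence relation in $\mathsf{Cat}(\EE)$ is effective, whereas you supply an argument, taking componentwise quotients $q_i\colon X_i\onto Q_i$ and transporting the groupoid structure via Proposition~\ref{regstab} applied to the centralizing pair $(\Eq[d_0^{X_1}],\Eq[d_1^{X_1}])$ of Example~\ref{exGroupoid}. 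The step you flag as the main obstacle, namely $q_1(\Eq[d_j^{X_1}])=\Eq[d_j^{Q_1}]$, does close along the lines you indicate: the inclusion $q_1(\Eq[d_j^{X_1}])\leqslant\Eq[d_j^{Q_1}]$ is automatic, and for the converse Corollary~\ref{regpush2} makes the square $(q_0,q_1)\colon(d_j^{X_1},s_0^{X_1})\to(d_j^{Q_1},s_0^{Q_1})$ a regular pushout, so the comparison $\psi\colon X_1\onto X_0\times_{Q_0}Q_1$ is a regular epimorphism; since $\Eq[d_j^{X_1}]\to\Eq[X_0\times_{Q_0}Q_1\to X_0]$ is the composite of two pullbacks of $\psi$ and the projection $\Eq[X_0\times_{Q_0}Q_1\to X_0]\cong X_0\times_{Q_0}\Eq[d_j^{Q_1}]\to\Eq[d_j^{Q_1}]$ is a pullback of $q_0$, the induced map $\Eq[d_j^{X_1}]\to\Eq[d_j^{Q_1}]$ is a regular epimorphism (this is precisely the Goursat pushout property of Proposition~\ref{Gran-Rodelo}, valid in any regular Mal'tsev category); Barr--Kock is not actually needed here. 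Two small points to tidy up: to see that $q_0\cdot d_j^{X_1}$ factors through $q_1$ you need both $\Eq[f_1]=\Eq[q_1]$ (monicity of $m_1$) and that $q_0$ coequalizes $\Eq[f_0]=\Eq[q_0]$ (monicity of $m_0$), not just $m_1$; and the fullness of $\mathsf{Grpd}(\EE)\into\mathsf{RG}(\EE)$ from Lemma~\ref{subreflexive} should be invoked to guarantee that $(q_0,q_1)$ is an internal functor, so that your componentwise kernel pair computation indeed exhibits $\mathbb R$ as a kernel pair in $\mathsf{Grpd}(\EE)$. What your route buys is a self-contained proof of the exactness claim inside the toolkit of this paper; what the paper's citation buys is brevity.
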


\begin{proof}
\begin{enumerate}
\item As shown in~\cite{CPP} the category $\mathsf{Cat}(\EE)$ is a full subcategory of the category $\mathsf{RG}(\EE)$ of reflexive graphs in $\EE$ (Lemma~\ref{subreflexive}). Next, given any internal functor $(f_0, f_1) \colon \mathbb X \rightarrow \mathbb Y$ in $\mathsf{Cat}(\EE)$
$$ \xymatrix@=17pt{X_1 \times_{X_0} X_1 \ar@<-1,ex>[d]_{p_0} \ar@<+1,ex>[d]^{p_1}  \ar[d]|(.50){m}  \ar@{.>}[rr]^{f_2}& & Y_1 \times_{Y_0} Y_1   \ar@<-1,ex>[d]_{p_0} \ar@<+1,ex>[d]^{p_1}  \ar[d]|(.50){m}  \\
	X_1\ar@<-1,ex>[d]_{d_0}\ar@<+1,ex>[d]^{d_1} \ar[rr]^{f_1}
	&& Y_1 \ar@<-1,ex>[d]_{d_0}\ar@<+1,ex>[d]^{d_1}\\
	X_0  \ar[rr]_{f_0} \ar[u]_{}& & Y_0 \ar[u]_{}   
}
$$
it has a canonical factorization in the category $\mathsf{RG}(\EE)$ of reflexive graphs as 
$$ \xymatrix@=17pt{
	X_1\ar@<-1,ex>[d]_{d_0}\ar@<+1,ex>[d]^{d_1} \ar@{->>} [rr]^{q_1}
	& & {I_1\, } \ar@<-1,ex>[d]_{d_0}\ar@<+1,ex>[d]^{d_1}  \ar@{>->}[rr]^{i_1}& & Y_1 \ar@<-1,ex>[d]_{d_0}\ar@<+1,ex>[d]^{d_1}\\
	X_0  \ar@{->>}[rr]_{q_0} \ar[u]_{}& &{I_0\, } \ar[u]_{}  \ar@{>->}[rr]_{i_0}& & Y_0 \ar[u]_{}   
}
$$
where $f_0 =  i_0 \cdot q_0$ and $f_1 = i_1 \cdot q_1 $ are the (regular epi)-mono factorizations of $f_0$ and of $f_1$ in $\EE$, respectively. The induced reflexive graph $\mathbb I$ in the middle of the diagram above is underlying a groupoid structure (by Lemma~\ref{subreflexive}, for instance), and the factorization above is then the (regular epi)-mono factorization in $\mathsf{Cat}(\EE)$ of the internal functor $(f_0, f_1)$. These factorizations are clearly pullback stable in $\mathsf{Cat}(\EE)$, since regular epimorphisms in $\EE$ are pullback stable by assumption. One then checks that any internal equivalence relation in $\mathsf{Cat}(\EE)$ is a kernel pair (see Theorem~$3.2$ in~\cite{Gran}) to conclude that $\mathsf{Cat}(\EE)$ is an exact category. The fact that $\mathsf{Cat}(\EE)$ is a Mal'tsev category immediately follows from Lemma~\ref{conser} and the fact that the forgetful functor $\mathsf{Cat} (\EE) \rightarrow \mathsf{RG}(\EE)$ to the Mal'tsev category $\mathsf{RG}(\EE)$ is left exact and conservative.
\item By induction, this follows immediately from the first part of the proof.
\end{enumerate}
\end{proof}

This result shows a difference with the case of a general exact category $\EE$, for which the category $\mathsf{Cat}(\EE)$ is not even regular, in general. For instance, the ordinary category $\mathsf{Cat}(\mathsf{Set})= \mathsf{Cat}$ of small categories and functors is not regular (the same can be said for the category of small groupoids).

Let us conclude this section by mentioning the important result in~\cite{CKP} asserting that a regular category $\mathbb C$ is a Mal'tsev category if and only if any simplicial object in $\mathbb C$ is an internal Kan complex.

\section{Regular Mal'tsev categories and the calculus of relations}\label{RegularRel}

The aim of this section is to briefly recall the calculus of relations in a regular category and present some instances of its usefulness in the context of Mal'tsev and Goursat categories~\cite{CLP, CKP}. We shall also give a categorical result concerning the direct product decomposition of an object coming from universal algebra.

Given a relation $\langle r_0,r_1 \rangle \colon {R}\rightarrow {X \times Y}$, its opposite relation $R^{\circ}$ is the relation from $Y$ to $X$ given by the subobject $\langle r_1,r_0 \rangle\colon {R}\rightarrow {Y \times X}$. Given two relations $\langle r_0,r_1 \rangle \colon {R}\rightarrow {X \times Y}$ and $\langle s_0,s_1 \rangle  \colon {S}\rightarrow {Y \times Z}$ in a regular category, their composite $SR={S\circ R}\rightarrow {X \times Z}$ can be defined as follows: take the pullback 
$$\xymatrix{R \times_Y S \ar[r]^-{\pi_1} \ar[d]_{\pi_0}  & S \ar[d]^{s_0}\\
	R \ar[r]_{r_1} & Y
}
$$
of $r_1$ and $s_0$, and the (regular epi)-mono factorization of $\langle r_0\pi_0,s_1\pi_1\rangle$:
$$
\xymatrix@R=5pt{R\times_Y S \ar[rr]^-{\langle r_0\pi_0,s_1\pi_1\rangle} \ar@{>>}[dr] & & X\times Z. \\
& S\circ R \ar@{ >->}[ur]}
$$
The composite $S \circ R$ is defined as the relation $\xymatrix{ {S\circ R \,\, } \ar@{>->}[r] & X \times Z}$ in the diagram above.
Note that the transitivity of a relation $R$ on an object $X$ can be expressed by the inequality $R \circ R\leqslant R$, and the symmetry by the inequality $R^{\circ}\leqslant R$ (or, equivalently, by $R^\circ=R$). 

In the following, given an arrow $f\colon A \rightarrow B$ in $\EE$, we shall identify it to the relation $\langle 1_A, f \rangle \rightarrow A \times B$ representing its graph. 
For any arrow $f \colon A \rightarrow B$ the corresponding relation is difunctional:$$ f \circ f^{\circ} \circ f = f.$$ 
Note also that $$f \circ f^{\circ} = 1_B$$ if and only if $f$ is a regular epimorphism, while $f^{\circ} \circ f = \Eq[f].$
Finally, with the notations we have introduced, any relation $\langle r_0,r_1 \rangle \colon {R}\rightarrow {X \times Y}$ can be written as the composite $R = r_1 \circ r_0^{\circ}$.

\begin{theorem}\label{permutability}\cite{Meisen, CLP}
For a regular category $\EE$, the following conditions are equivalent:
\begin{itemize}
\item[(M1)] for any pair of equivalence relations $R$ and $S$ on any object $X$ in $\EE$, \\ $S \circ R = R \circ S$;
\item[(M3)] any relation $U$ from $X$ to $Y$ is difunctional; 
\item[(M4)] $\EE$ is a Mal'tsev category;
\item[(M5)] any reflexive relation $R$ on any object $X$ in $\EE$ is symmetric;
\item[(M6)] any reflexive relation $R$ on any object $X$ in $\EE$ is transitive.
\end{itemize}
\end{theorem}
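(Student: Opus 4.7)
The plan is to close the cycle (M4) $\Leftrightarrow$ (M3) $\Rightarrow$ (M5) $\Rightarrow$ (M1) $\Rightarrow$ (M3), supplemented by the branch (M4) $\Rightarrow$ (M6) $\Rightarrow$ (M1), which together yield all five conditions equivalent. The equivalence (M3) $\Leftrightarrow$ (M4) is already provided by Theorem~\ref{theounit}, which is stated for any finitely complete category and hence \emph{a fortiori} applies in a regular one. The implications (M4) $\Rightarrow$ (M5) and (M4) $\Rightarrow$ (M6) are immediate, since any equivalence relation is by definition both symmetric and transitive.

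I would then deduce (M5) $\Rightarrow$ (M1) and (M6) $\Rightarrow$ (M1) from elementary observations on the composite $R\circ S$ of two equivalence relations on the same object $X$. Being a composite of two reflexive relations, $R\circ S$ is itself reflexive. Under (M5) it is then symmetric, so $R\circ S=(R\circ S)^\circ=S^\circ\circ R^\circ=S\circ R$, where I use the symmetry of $R$ and $S$ in the last equality. Under (M6) the same composite is transitive, so $(R\circ S)\circ(R\circ S)\leqslant R\circ S$; together with the inclusions $R\leqslant R\circ S$ and $S\leqslant R\circ S$ (both consequences of $\Delta_X\leqslant R$ and $\Delta_X\leqslant S$), this yields $S\circ R\leqslant (R\circ S)\circ(R\circ S)\leqslant R\circ S$, and the reverse inequality follows by swapping the roles of $R$ and $S$.

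The substantive step is (M1) $\Rightarrow$ (M3), which I would prove by the calculus of relations. Given a relation $\langle u_0,u_1\rangle\colon U\rightarrowtail X\times Y$, write $U=u_1\circ u_0^\circ$ and use $u_i^\circ\circ u_i=\Eq[u_i]$ to compute
\[ U\circ U^\circ\circ U=u_1\circ u_0^\circ\circ u_0\circ u_1^\circ\circ u_1\circ u_0^\circ=u_1\circ\Eq[u_0]\circ\Eq[u_1]\circ u_0^\circ. \]
Invoking (M1) to exchange the two middle equivalence relations on $U$ transforms this into
\[ u_1\circ\Eq[u_1]\circ\Eq[u_0]\circ u_0^\circ=(u_1\circ u_1^\circ\circ u_1)\circ(u_0^\circ\circ u_0\circ u_0^\circ)=u_1\circ u_0^\circ=U, \]
by the identity $f\circ f^\circ\circ f=f$ valid for any arrow $f$. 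Hence $U\circ U^\circ\circ U\leqslant U$, i.e.\ $U$ is difunctional.

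The main obstacle I expect is precisely this calculus step (M1) $\Rightarrow$ (M3): one must confidently manipulate composites and opposites of relations, and input the identities $u^\circ\circ u=\Eq[u]$ and $f\circ f^\circ\circ f=f$. This is also the point at which regularity is genuinely used, since the well-behavedness of composition of relations (via regular-epi/mono factorizations) is exactly what the regular framework provides. All other implications reduce to trivial facts about reflexivity, symmetry, and transitivity of composites.
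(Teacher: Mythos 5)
Your proposal is correct, and it closes the equivalence by a genuinely different (and somewhat more economical) routing than the paper. The paper proves $(M1)\Rightarrow(M3)\Rightarrow(M4)\Rightarrow(M5)$, then goes back up via a direct relational computation $(M5)\Rightarrow(M3)$, handles $(M6)$ by another direct computation $(M6)\Rightarrow(M3)$, and finally adds $(M5)\Rightarrow(M1)$ separately. You instead funnel everything through $(M1)$: your $(M5)\Rightarrow(M1)$ coincides with the paper's last step ($R\circ S$ reflexive, hence symmetric, hence $R\circ S=(R\circ S)^{\circ}=S^{\circ}\circ R^{\circ}=S\circ R$), and your $(M6)\Rightarrow(M1)$ replaces the paper's longer computation by the elementary observation that $R\leqslant R\circ S$ and $S\leqslant R\circ S$ give $S\circ R\leqslant(R\circ S)\circ(R\circ S)\leqslant R\circ S$, with the reverse inclusion by symmetry of roles. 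Your substantive step $(M1)\Rightarrow(M3)$ is the same computation as the paper's, merely read in the opposite direction ($U\circ U^{\circ}\circ U=u_1\circ\Eq[u_0]\circ\Eq[u_1]\circ u_0^{\circ}$, commute the kernel pairs, contract with $f\circ f^{\circ}\circ f=f$). What the paper's route buys is a self-contained demonstration that symmetry (resp.\ transitivity) of reflexive relations \emph{directly} forces difunctionality of arbitrary relations, without passing through permutability; what your route buys is brevity, eliminating two of the longer relational manipulations and making transparent that all the "easy" conditions collapse onto $(M1)$. Both arguments use only associativity and monotonicity of relational composition together with $f\circ f^{\circ}\circ f=f$, so the use of regularity is identical.
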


\begin{proof}
$(M1) \Rightarrow (M3)$ As observed above, any relation $$
\xymatrix{&U \ar[dl]_{u_0} \ar[dr]^{u_1} & \\
X& & Y
}
$$ can be written as $U = u_1 \circ u_0^\circ$. The assumption implies in particular that the kernel pairs $\Eq[u_0]$ and $\Eq[u_1]$ of the projections commute in the sense of the composition of relations (on the object $U$):
$$(u_1^\circ \circ u_1) \circ  (u_0^\circ \circ u_0)= (u_0^\circ \circ u_0) \circ  (u_1^\circ \circ u_1).$$ Accordingly, by keeping in mind that the relations $u_0$ and $u_1$ are difunctional:
\begin{eqnarray}
U  & =& u_1 \circ u_0^\circ \nonumber  \\
 & =& (u_1 \circ u_1^\circ \circ u_1) \circ (u_0^\circ \circ u_0 \circ u_0^\circ ) \nonumber  \\
& = & u_1 \circ (u_1^\circ \circ u_1) \circ (u_0^\circ \circ u_0) \circ u_0^\circ \nonumber \\
& = & u_1 \circ (u_0^\circ \circ u_0) \circ (u_1^\circ \circ u_1) \circ u_0^\circ \nonumber \\
& = & (u_1 \circ u_0^\circ) \circ (u_0 \circ u_1^\circ) \circ (u_1 \circ u_0^\circ) \nonumber \\
& = & U \circ U^\circ \circ U. \nonumber 
\end{eqnarray}
$(M3) \Rightarrow (M4)$  This appears already as Theorem~\ref{theounit}. Using the calculus of relations, we can proceed as follows. Let $(U,u_0, u_1)$ be a reflexive relation on an object $X$, so that $1_X \le U$. 
 By difunctionality we have: $$U^\circ = 1_X \circ U^\circ \circ 1_X \le U \circ U^\circ \circ U = U,$$ showing that $U$ is symmetric. On the other hand:
$$ U \circ U = U \circ 1_X \circ U \le U \circ U^\circ \circ U = U,$$ and $U$ is transitive.\\
$(M4) \Rightarrow (M5)$ Clear. \\
$(M5) \Rightarrow (M3)$ Let $(U, u_0, u_1)$ be a relation from $X$ to $Y$.
The relation $$u_0^\circ \circ u_0\circ  u_1^\circ \circ u_1$$ is reflexive, since both 
the kernel pairs $u_0^\circ \circ u_0$ and $u_1^\circ \circ u_1$ are reflexive. By assumption the relation $u_0^\circ \circ u_0\circ  u_1^\circ \circ u_1$ is then symmetric:
$$(u_0^\circ \circ u_0\circ  u_1^\circ \circ u_1)^\circ = u_0^\circ \circ u_0\circ  u_1^\circ \circ u_1.$$ This implies that 
$$ u_1^\circ \circ u_1 \circ u_0^\circ \circ u_0 = u_0^\circ \circ u_0\circ  u_1^\circ \circ u_1,$$
and then, by multiplying on the left by $u_1$ and on the right by $u_0^\circ$ we get the equality
$$ u_1 \circ u_1^\circ \circ u_1 \circ u_0^\circ \circ u_0 \circ u_0^\circ= u_1 \circ u_0^\circ \circ u_0\circ  u_1^\circ \circ u_1 \circ u_0^\circ.$$
By difunctionality of $u_1$ and $u_0^\circ$ it follows that 
$$ u_1 \circ u_0^\circ = u_1 \circ u_0^\circ \circ u_0\circ  u_1^\circ \circ u_1 \circ u_0^\circ,$$
and then 
$$u_1 \circ u_0^\circ =  (u_1 \circ u_0^\circ) \circ (u_1 \circ u_0^\circ )^\circ \circ (u_1 \circ u_0^\circ),$$
showing that $U = u_1 \circ u_0^\circ$ is difunctional.

Observe that $(M4) \Rightarrow (M6)$ is obvious, and let us then prove that $(M6) \Rightarrow (M3)$.
Let $U= u_1 \circ u_0^\circ$ be any relation from $X$ to $Y$.
The relation $$u_1^\circ \circ u_1\circ  u_0^\circ \circ u_0$$ is reflexive, thus it is transitive by assumption. This gives the equality
$$(u_1^\circ \circ u_1\circ  u_0^\circ \circ u_0) \circ (u_1^\circ \circ u_1\circ  u_0^\circ \circ u_0) = u_1^\circ \circ u_1 \circ  u_0^\circ \circ u_0, $$ yielding 
$$u_1 \circ u_1^\circ \circ u_1\circ  u_0^\circ \circ u_0 \circ u_1^\circ \circ u_1\circ  u_0^\circ \circ u_0 \circ u_0^\circ= u_1 \circ u_1^\circ \circ u_1\circ  u_0^\circ \circ u_0 \circ u_0^\circ.$$
By difunctionality we conclude that
$$ u_1 \circ  u_0^\circ \circ u_0 \circ u_1^\circ \circ u_1\circ u_0^\circ= u_1 \circ u_0^\circ,$$
and $$U \circ U^\circ \circ U =U.$$

Finally, to see that $(M5)  \Rightarrow (M1)$, observe that the relation $S \circ R$ is reflexive, and then it is symmetric, so that 
$$S \circ R= (S \circ R)^\circ = R^\circ \circ S^\circ = R \circ S,$$
concluding the proof.
\end{proof}

 \subsection*{Direct product decompositions}
 
In any regular category $\mathbb E$, given two equivalence relations $R$ and $S$ on $X$ such that $R\circ S = S \circ R$, the composite $R \circ S$ is then an equivalence relation: indeed, the relation $R \circ S$ is obviously reflexive, but also symmetric, since $$(R\circ S)^\circ = S^\circ \circ R^\circ = S \circ R = R \circ S,$$ and transitive: $$(R\circ S) \circ (R\circ S) = R \circ R \circ S \circ S = R \circ S.$$
The equivalence relation $R \circ S$ is then the supremum $R \vee S$ of $R$ and $S$ as equivalence relations on $X$. 
When this is the case, by Proposition $2.3$ in~\cite{direct-product}, the canonical morphism $\alpha \colon R \square S \rightarrow R \times_XS$ in the following diagram: 
$$\xymatrix@=25pt{R \square S \ar@{.>}[dr]^{\alpha} \ar@(u,r)[drr]^{p_1} \ar@/_/[ddr]_{p_0}&  & \\
& R \times_X S \ar[r]^-{p_1^S} \ar[d]_-{p_0^R} & S \ar[d]^-{d_0^S} \\ & R \ar[r]_-{d_1^R} & X}$$
from the largest double equivalence relation $R \square S$ on $R$ and $S$ to the pullback $R \times_X S$ is a regular epimorphism.
We then get the following:

\begin{theorem}\label{direct-product}\cite{direct-product}
Let $\mathbb E$ be an exact category, $R$ and $S$ two equivalence relations on $X$ such that:
\begin{itemize}
\item $R \wedge S = \Delta_X$;
\item $R \circ S = S \circ R$ and
\item $R \vee S = \nabla_X$.
\end{itemize}
Then $X$ is isomorphic to $X/R \times X/S$.
\end{theorem}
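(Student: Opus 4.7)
The plan is to show that the canonical comparison morphism $\phi = \langle q_R, q_S \rangle \colon X \to X/R \times X/S$, where $q_R\colon X\onto X/R$ and $q_S\colon X\onto X/S$ are the quotients (which exist by exactness of $\EE$, and satisfy $\Eq[q_R]=R$ and $\Eq[q_S]=S$ by effectiveness of every equivalence relation), is an isomorphism. Since $\EE$ is regular, it suffices to prove that $\phi$ is both a monomorphism and a regular epimorphism: a morphism which is both of those in a regular category is automatically an isomorphism.

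The monomorphism part is immediate: the kernel pair of $\phi$ is $\Eq[q_R]\wedge \Eq[q_S]=R\wedge S=\Delta_X$ by the first hypothesis.

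For the regular epimorphism part, the crucial observation is that the commutation $R\circ S=S\circ R$ together with $R\vee S=\nabla_X$ forces $R\circ S=\nabla_X$ as a relation on $X$ (since, as discussed right before the theorem, when $R$ and $S$ commute their composite is an equivalence relation equal to $R\vee S$). Unwinding the definition of composition of relations recalled in Section~\ref{RegularRel}, $R\circ S$ is by construction the image of the canonical map $R \times_X S \to X \times X$, so $R\circ S=\nabla_X$ means precisely that this canonical map is itself a regular epimorphism. The morphism $q_R \times q_S \colon X \times X \to X/R \times X/S$ is also a regular epimorphism, as a composite of two pullbacks of regular epimorphisms along product projections. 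Composing these two regular epis, we obtain a regular epimorphism
$$R \times_X S \longrightarrow X \times X \xrightarrow{q_R \times q_S} X/R \times X/S.$$
Finally, this composite factors through $\phi$: letting $\pi\colon R\times_X S\to X$ denote the middle projection $(xRySz)\mapsto y$, the identities $q_R(x)=q_R(y)$ and $q_S(y)=q_S(z)$ yield $(q_R\times q_S)(x,z)=\phi(y)=\phi\bigl(\pi(xRySz)\bigr)$. A morphism whose post-composition with another yields a regular epimorphism is itself a regular epimorphism (in a regular category this follows from the uniqueness of regular-epi/mono factorizations), so $\phi$ is a regular epi, hence an isomorphism.

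The conceptual heart of the argument is the translation of the surjectivity-flavoured identity $R\circ S=\nabla_X$ into the categorical statement that $R\times_X S \to X\times X$ is a regular epimorphism, via the definition of composition of relations as an image factorization; the rest is a routine assembly using the calculus of relations in the regular-exact setting. Note that the commutation hypothesis is used only to guarantee $R\circ S = R\vee S$; the Mal'tsev property itself is not invoked, which is why the statement is phrased in the more general exact setting.
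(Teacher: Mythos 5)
Your proof is correct, and it takes a genuinely different route from the one in the paper. You reduce everything to the single comparison morphism $\langle q_R,q_S\rangle\colon X\to X/R\times X/S$ and check directly that it is a monomorphism (its kernel pair is $\Eq[q_R]\wedge \Eq[q_S]=R\wedge S=\Delta_X$) and a regular epimorphism (since $R\circ S=R\vee S=\nabla_X$ forces the canonical map $R\times_X S\to X\times X$ to be a regular epimorphism, whose composite with the regular epimorphism $q_R\times q_S$ factors through $\langle q_R,q_S\rangle$, and regular epimorphisms are cancellable on the right via the uniqueness of image factorizations). The paper argues structurally instead: it uses $R\wedge S=\Delta_X$ and the commutation to show that the comparison $R\,\square\, S\to R\times_X S$ is an isomorphism, so that certain squares become pullbacks; it then passes to quotients, invokes the Barr--Kock theorem to obtain a pullback square exhibiting $X$ as $X/R\times_Q X/S$ for a suitable quotient $Q$, and only at the very end uses $R\vee S=\nabla_X$ (together with the fact that the square is a pushout) to identify $Q$ with a subobject of the terminal object. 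Your argument is shorter and more elementary, relying only on the calculus of images and the mono-plus-regular-epi criterion for isomorphisms; the paper's longer route buys the intermediate pullback decomposition of $X$ over $Q\cong X/(R\vee S)$, which carries information even when $R\vee S\neq\nabla_X$ and ties the theorem to the Barr--Kock circle of ideas. Both proofs use the commutation hypothesis only through the identity $R\circ S=R\vee S$, exactly as you observe.
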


\begin{proof}
The first two assumptions imply that any of the commutative squares on the left hand side in the diagram
\begin{equation}\label{quotient}
 \xymatrix@=33pt{
	R \square S \ar@<-1,ex>[d]_{p_0}\ar@<+1,ex>[d]^{p_1}  \ar@<1,ex>[r]^-{p_0} \ar@<-1,ex>[r]_-{p_1}
	& S \ar[l]\ar@<-1,ex>[d]_{s_0}\ar@<+1,ex>[d]^{s_1} \ar@{->>}[r]^{\tau} & T \ar@<-1,ex>[d]_{t_0}\ar@<+1,ex>[d]^{t_1}  \\
	R \ar@<-1,ex>[r]_-{r_1} \ar@<1,ex>[r]^-{r_0} \ar[u]& X \ar[u] \ar@{->>}[r]_{q_R} \ar[u] \ar[l] & X/R \ar[u]
}
\end{equation}
is a pullback (since the canonical morphism $R \square S \rightarrow R \times_XS$ in~\eqref{canonical} is both a monomorphism and a regular epimorphism). The right-hand part of the diagram is obtained by taking the quotient $X/R$ of $X$ by the equivalence relation $R$, and the quotient $T$ of $S$ by the equivalence relation $R \square S$ on $S$, with $t_0$ and $t_1$ the induced factorizations. The fact that the equivalence relation $R \square S$ on $S$ is the inverse image of the relation $R \times R$ on $X \times X$ implies that $(t_0, t_1) \colon T \rightarrow X/R \times X/R$ is a monomorphism. The relation $T$ actually is an equivalence relation (by Theorem $3$ in~\cite{BournComprehensive}), and the so-called Barr-Kock theorem~\cite{Barr, BG2} implies that the following square is a pullback
$$
\xymatrix{ X \ar@{->>}[r]^{q_R} \ar@{->>}[d]_{q_S} & X/R \ar@{->>}[d]^{\gamma} \\
X/S \ar@{->>}[r]_{\beta} & Q
}
$$
where $\gamma \colon X/R \rightarrow Q$ is the quotient of $X/R$ by $T$ and $\beta \colon X/S \rightarrow Q$ the unique induced factorization. This square is also a pushout (since $\tau$ in the diagram~\eqref{quotient} is a regular epimorphism), and in the exact category $\mathbb E$ this implies that the kernel pair $\Eq[\gamma \cdot q_R]$ of $\gamma \cdot q_R$ is the supremum $R \vee S$ of $R$ and $S$ as (effective) equivalence relation on $X$. Since $R \vee S = \nabla_X $, we conclude that $Q$ is the quotient of $X$ by $\nabla_X$, therefore it is a subobject of the terminal object $1$.
Accordingly, the following diagram is a pullback
$$
\xymatrix{ X \ar@{->>}[r]^{q_R} \ar@{->>}[d]_{q_S} & X/R \ar[d]^{} \\
X/S \ar[r]_{} & 1
}
$$
and $X \cong X/R \times X/S$, as expected. 
\end{proof}

\begin{remark}
In any exact category $\mathbb E$, the product $\xymatrix{X & X \times Y \ar[r]^-{p_1} \ar[l]_-{p_0}& Y}$
of two objects $X$ and $Y$ is such that 
$\Eq[p_0] \wedge \Eq[p_1] = \Delta_{X \times Y}$, $\Eq[p_0] \circ \Eq[p_1] = \Eq[p_1] \circ \Eq[p_0]$ and $\Eq[p_0] \vee \Eq[p_1] = \nabla_{X \times Y}$. Theorem~\ref{direct-product} can then be seen as a kind of converse to this simple observation.
\end{remark}

\begin{remark}
We observe that the assumptions in Theorem~\ref{direct-product} are the (categorical formulations of the) properties defining a pair of \emph{factor congruences} in the sense of universal algebra.
\end{remark}

If the base category $\mathbb E$ is exact Mal'tsev we immediately get the following:

\begin{corollary}
Let $\mathbb E$ be an exact Mal'tsev category. Whenever two equivalence relations $R$ and $S$ on the same object $X$ are such that $R \wedge S = \Delta_X$ and $R \vee S = \nabla_X$, then there is a canonical isomorphism $X \cong X/R \times X/S$.
\end{corollary}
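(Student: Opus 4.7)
The plan is to reduce the statement to a direct application of Theorem~\ref{direct-product} (the direct product decomposition in the exact case) by supplying its missing hypothesis from the Mal'tsev property. Theorem~\ref{direct-product} requires three conditions on the pair $(R,S)$: the meet condition $R \wedge S = \Delta_X$, the permutability condition $R \circ S = S \circ R$, and the join condition $R \vee S = \nabla_X$. The corollary's hypotheses give us the first and third directly, so the whole task is to verify the middle condition under the Mal'tsev assumption.

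For this, I would invoke Theorem~\ref{permutability}: since an exact Mal'tsev category is in particular a regular Mal'tsev category, the equivalence of conditions \ref{any reflexive relation is a congruence} and \ref{congruence commutativity} gives us that $R \circ S = S \circ R$ for \emph{any} pair of equivalence relations on a common object. Thus permutability is automatic, and applying Theorem~\ref{direct-product} yields the isomorphism $X \cong X/R \times X/S$.

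There is essentially no obstacle here: all the content has been packaged into the two preceding theorems. If one wished to elaborate, one could remark that the canonical comparison map $X \to X/R \times X/S$ is the pair $\langle q_R, q_S \rangle$ of the two quotient maps, and that the proof of Theorem~\ref{direct-product} shows precisely that this is the top row of the pullback square over the terminal object. This makes the isomorphism genuinely canonical in the sense announced.
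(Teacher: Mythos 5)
Your proposal is correct and matches the route the paper intends: the corollary is obtained by feeding the two stated hypotheses into Theorem~\ref{direct-product}, with the missing permutability condition $R \circ S = S \circ R$ supplied automatically by the Mal'tsev property via Theorem~\ref{permutability} (the equivalence of (M4) with (M1) in the regular, hence exact, setting). Your closing remark correctly identifies the canonical isomorphism as $\langle q_R, q_S\rangle$, which is exactly the pullback square over the terminal object appearing at the end of the proof of Theorem~\ref{direct-product}.
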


\subsection*{A glance at Goursat categories}

We now briefly recall and study some basic properties of Goursat categories. The origin of this important concept definitely goes back to the celebrated article~\cite{CLP} by A. Carboni, J. Lambek and M.C. Pedicchio, although the explicit definition and a first systematic study of Goursat categories was presented later in~\cite{CKP}.

\begin{definition}
A regular category $\mathbb E$ is a Goursat category if for any two equivalence relations $R$ and $S$ on the same object $X$ in $\mathbb E$ one has the equality $$R \circ S \circ R = S \circ R \circ S.$$
\end{definition}

Remark that a variety of universal algebras is a Goursat category if and only if it is $3$-permutable in the usual sense~\cite{HH}. Any regular Mal'tsev category is clearly a Goursat category, however the converse is not true: indeed, the variety of implication algebras is an example of a $3$-permutable variety, and therefore of an exact Goursat category, that is not $2$-permutable~\cite{Mitschke}.

A remarkable categorical property of Goursat categories is that the regular image of any equivalence relation is again an equivalence relation. This is actually characteristic of these categories, as shown in~\cite{CKP}. Here below we give a simple and self-contained proof of this result:

\begin{proposition}\label{regular-image}
A regular category $\mathbb E$ is a Goursat category if and only if for any equivalence relation $R$ on any object $X$ and any regular epimorphism $f \colon X \twoheadrightarrow Y$, the regular image $f(R)$ is an equivalence relation.
\end{proposition}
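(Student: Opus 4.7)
The plan is to handle both implications through the calculus of relations introduced in this section, writing the regular image as $f(R) = f \circ R \circ f^{\circ}$ and exploiting the two basic identities $f \circ f^{\circ} = 1_Y$ (since $f$ is a regular epimorphism) and $f^{\circ} \circ f = \Eq[f]$.

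For the forward implication, I would assume $\EE$ is Goursat and verify that $f(R) = f \circ R \circ f^{\circ}$ satisfies the three axioms of an equivalence relation. Reflexivity follows from $1_Y = f \circ f^{\circ} \leqslant f \circ R \circ f^{\circ}$, since $1_X \leqslant R$, and symmetry is immediate from $R^{\circ} = R$. The substantive step is transitivity, for which I would compute
\[ (f \circ R \circ f^{\circ}) \circ (f \circ R \circ f^{\circ}) = f \circ R \circ \Eq[f] \circ R \circ f^{\circ}, \]
then apply the Goursat identity to the pair of equivalence relations $R$ and $\Eq[f]$ on $X$ to rewrite the middle as $\Eq[f] \circ R \circ \Eq[f]$, and finally absorb the outer copies of $\Eq[f]$ using the identities $f \circ \Eq[f] = f \circ f^{\circ} \circ f = f$ and $\Eq[f] \circ f^{\circ} = f^{\circ} \circ f \circ f^{\circ} = f^{\circ}$ to recover $f \circ R \circ f^{\circ}$ exactly.

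For the converse, I would assume that the regular image of any equivalence relation is an equivalence relation, and, given equivalence relations $R$ and $S$ on $X$, pick the quotient $q = q_S \colon X \twoheadrightarrow X/S$, so that $q \circ q^{\circ} = 1$ and $q^{\circ} \circ q = S$. By hypothesis $q(R) = q \circ R \circ q^{\circ}$ is transitive, which unwinds to $q \circ R \circ S \circ R \circ q^{\circ} \leqslant q \circ R \circ q^{\circ}$. Pre-composing with $q^{\circ}$ and post-composing with $q$ then gives $S \circ R \circ S \circ R \circ S \leqslant S \circ R \circ S$. Bracketing $R \circ S \circ R$ with reflexive copies of $S$ on either side (using $1 \leqslant S$) yields $R \circ S \circ R \leqslant S \circ R \circ S \circ R \circ S \leqslant S \circ R \circ S$. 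Running the same argument with $q_R \colon X \twoheadrightarrow X/R$ in place of $q_S$, and the roles of $R$ and $S$ interchanged, produces the reverse inequality $S \circ R \circ S \leqslant R \circ S \circ R$, and together these give the Goursat identity $R \circ S \circ R = S \circ R \circ S$.

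I expect the forward direction's transitivity calculation to be the main conceptual step: one must spot that $f^{\circ} \circ f = \Eq[f]$ converts $(f \circ R \circ f^{\circ})^{2}$ into a three-fold composite involving precisely the pattern $R \circ \Eq[f] \circ R$, which is the exact form to which Goursat applies, after which the collapse using $f \circ f^{\circ} = 1_Y$ is automatic. The backward direction is essentially a translation of the transitivity of $q(R)$ back to $X$, the only subtlety being the trick of inserting reflexive copies of $S$ around $R$ in order to pass from the five-term composite to the three-term Goursat identity.
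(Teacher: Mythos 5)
Your forward implication is exactly the paper's argument: reflexivity and symmetry of $f(R)=f\circ R\circ f^{\circ}$ hold in any regular category, and for transitivity you rewrite $(f\circ R\circ f^{\circ})\circ(f\circ R\circ f^{\circ})$ as $f\circ R\circ \Eq[f]\circ R\circ f^{\circ}$, apply the Goursat identity to the pair $(R,\Eq[f])$, and absorb the outer copies of $\Eq[f]$ using $f\circ f^{\circ}=1_Y$. That half is correct and coincides with the paper's computation.

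The converse, however, has a genuine gap. You form $q_S\colon X\twoheadrightarrow X/S$, ``the quotient of $X$ by $S$'', for an arbitrary equivalence relation $S$. But $\EE$ is only assumed regular, not exact: the definition used in this paper only guarantees coequalizers for \emph{effective} equivalence relations (kernel pairs), so there need not exist any regular epimorphism $q$ with $q^{\circ}\circ q=S$. Your argument therefore establishes the Goursat identity only for pairs in which $S$ is effective (equivalently, it proves the converse in the exact case), whereas the identity $R\circ S\circ R=S\circ R\circ S$ must hold for all pairs of equivalence relations. The paper avoids quotients altogether: writing $R=r_1\circ r_0^{\circ}$ and using the symmetry of $R$, one has $R\circ S\circ R=r_1\circ r_0^{\circ}\circ s_1\circ s_0^{\circ}\circ r_0\circ r_1^{\circ}=r_1\bigl(r_0^{-1}(S)\bigr)$, so $R\circ S\circ R$ is the regular image of the equivalence relation $r_0^{-1}(S)$ on $R$ along the \emph{split} (hence regular) epimorphism $r_1$, which always exists. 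The hypothesis then makes $R\circ S\circ R$ an equivalence relation; its transitivity, combined with the reflexivity of $R$ and $S$ and the absorption $R\circ R=R$, gives $S\circ R\circ S\leqslant R\circ S\circ R$, and exchanging the roles of $R$ and $S$ yields the equality. Replacing your choice of $q_S$ by this split epimorphism is the repair your proof needs; the surrounding relational manipulations are otherwise sound.
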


\begin{proof}
One implication is direct: in a regular category $\mathbb E$ the relation $f(R)$ is always reflexive and symmetric, and when $\mathbb E$ is a Goursat category then it is also transitive: \begin{eqnarray} &f(R) \circ  f(R) = (f \circ R \circ f^\circ) \circ (f \circ R \circ f^\circ) \nonumber \\
& = f \circ f^\circ \circ f \circ R \circ f^\circ \circ f \circ f^\circ \nonumber = f \circ R \circ f^\circ = f (R). \nonumber
\end{eqnarray}
Note that the assumption has been used in the second equality.

For the converse, consider two equivalence relations $R$ and $S$ on $X$. Then the composite $R\circ S \circ R$ can be written as 
$$R \circ S \circ R = r_1 \circ r_0^\circ \circ s_1 \circ s_0^\circ \circ r_1 \circ r_0^\circ= r_1 \circ r_0^\circ \circ s_1 \circ s_0^\circ \circ r_0 \circ r_1^\circ = r_1 (r_0^{-1} (S)),$$
as observed in~\cite{BournGoursat}. The assumption then implies that $R \circ S \circ R$ is an equivalence relation, as a direct image of the equivalence relation $r_0^{-1} (S)$ along the split epimorphism $r_1$ (which is then a regular epimorphism). Its transitivity implies that $S \circ R \circ S \le R \circ S \circ R$, and then $S \circ R \circ S = R \circ S \circ R$.
\end{proof}

This characterization and the so-called \emph{denormalized $3 \times 3$-Lemma}~\cite{Bourn3x3, Lack} inspired a new characterization of Goursat categories in terms of a special kind of pushouts:

\begin{definition}\label{GoursatPush}\cite{Gran-Rodelo}
Consider a commutative square (with $y \cdot f = f' \cdot x$ and $x \cdot s = s' \cdot y$)
\begin{equation}\label{GPushout}
\xymatrix@C=2,5pc@R=2pc{ 
		X \ar@<-1ex>[d]_{f} \ar@{->>}[r]^{x} 
		& X' \ar[d]_{f'} \\
		Y \ar[u]_{s} \ar@{->>}[r]_{y} & Y' \ar@<-1ex>[u]_{s'}  }
		\end{equation}
where the vertical morphisms are split epimorphisms and the horizontal ones are regular epimorphisms. This square is a pushout, and it is called a \emph{Goursat pushout} if the induced morphism $\Eq[f] \rightarrow \Eq[f']$ is a regular epimorphism.
\end{definition}

\begin{proposition}\label{Gran-Rodelo}\cite{Gran-Rodelo}
A regular category $\mathbb E$ is a Goursat category if and only if any commutative diagram~\eqref{GPushout} is a Goursat pushout.
\end{proposition}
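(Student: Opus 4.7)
My plan is to prove both implications using Proposition~\ref{regular-image}, which characterizes Goursat categories as those regular categories in which the regular image of an equivalence relation along any regular epimorphism is again an equivalence relation. The pushout property of such a square is part of Definition~\ref{GoursatPush}; I will establish it directly by checking that any cocone $(g,h)$ with $g\cdot x = h\cdot f$ has $h$ coequalizing $\Eq[y]$, using the compatibilities $x\cdot s = s' \cdot y$ and $f \cdot s = 1_Y$. The substantive content is the regular-epi condition on the comparison $\Eq[f] \to \Eq[f']$.

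For the forward direction, assume $\mathbb{E}$ is Goursat. I plan to show that the regular image $x(\Eq[f])$ coincides with $\Eq[f']$ as a subobject of $X' \times X'$, which is equivalent to $\Eq[f] \to \Eq[f']$ being a regular epi. The inclusion $x(\Eq[f]) \le \Eq[f']$ will be formal from $y \cdot f = f' \cdot x$. For the reverse inclusion, I will invoke Proposition~\ref{regular-image} to conclude that $x(\Eq[f])$ is already an equivalence relation on $X'$; then for any element $(a',b')$ of $\Eq[f']$ with $x$-preimages $a, b \in X$, the pairs $(a, s(f(a))), (b, s(f(b)))$ both belong to $\Eq[f]$, and their $x$-images share a common second coordinate
\[
x(s(f(a))) = s'(f'(x(a))) = s'(f'(a')) = s'(f'(b')) = x(s(f(b))),
\]
so symmetry and transitivity of $x(\Eq[f])$ combine to give $(a', b') \in x(\Eq[f])$.

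For the converse, I plan to reduce, once more via Proposition~\ref{regular-image}, to showing that the regular image $f(R)$ of any equivalence relation $R$ under any regular epi $f\colon X \twoheadrightarrow Y$ is transitive (reflexivity and symmetry being inherited from $R$). The key will be to apply the hypothesis to the square
\[
\xymatrix@C=3pc@R=2pc{R \ar@<-1ex>[d]_{r_0} \ar@{->>}[r]^-{\bar{x}} & f(R) \ar[d]^{\bar{r}_0} \\ X \ar[u]_{s_0} \ar@{->>}[r]_f & Y \ar@<-1ex>[u]_{\Delta}}
\]
whose columns are the first projections equipped with their canonical reflexivity sections ($\Delta$ lands in $f(R)$ since $f(R)$ is reflexive), and whose top row $\bar{x}$ is the regular-epi factor of $R \to Y \times Y$. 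The hypothesis will then yield a regular epi $\Eq[r_0] \twoheadrightarrow \Eq[\bar{r}_0]$. Given $(y_0, y_1), (y_1, y_2) \in f(R)$, symmetry of $f(R)$ produces an element $((y_1, y_0), (y_1, y_2))$ of $\Eq[\bar{r}_0]$, which lifts locally along the regular epi to a triple $(a, b, b')$ with $(a, b), (a, b') \in R$ and $f(a) = y_1,\, f(b) = y_0,\, f(b') = y_2$; by symmetry and transitivity of $R$, $(b, b') \in R$, giving $(y_0, y_2) \in f(R)$.

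The hard part will be in the converse direction: the choice of square is delicate, since $f(R)$ must appear simultaneously as part of the data of the square (as the codomain of $\bar{x}$, with its first projection serving as the right-hand split epi) and as the object whose transitivity is the desired conclusion. Once the square is set up, the kernel-pair comparison being a regular epi encodes exactly the ``common-first-coordinate lifting'' property that reduces transitivity of $f(R)$ to transitivity of $R$.
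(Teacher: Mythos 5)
Your proof is correct, and its skeleton matches the paper's: both directions hinge on the same two reductions, namely that the Goursat-pushout condition for~\eqref{GPushout} amounts to the identity $x(\Eq[f]) = \Eq[f']$ of subobjects of $X'\times X'$, and that for the converse one applies the hypothesis to the square formed by $\bar{x}\colon R \onto f(R)$ over $f$, with the first projections and their reflexivity splittings as vertical data. Where you genuinely diverge is in how the inclusion $\Eq[f'] \le x(\Eq[f])$ is verified in the forward direction: the paper computes it purely in the calculus of relations, inserting $x\circ x^\circ = 1_{X'}$ and using the Goursat identity for the pair $(\Eq[x],\Eq[f])$ together with $f(\Eq[x])=\Eq[y]$ (this last equality being where the splittings enter); you instead route through Proposition~\ref{regular-image} to know that $x(\Eq[f])$ is already an equivalence relation and then connect $a'$ and $b'$ through the common point $s'f'(a')$. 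Your version makes transparent that the only Goursat input needed is the image-of-equivalence-relations property, at the price of an element-style argument which, in an abstract regular category, must be read in terms of generalized elements with liftings along regular epimorphisms available only after passing to a cover (or justified via Barr's embedding theorem, which the paper explicitly licenses for exactly this kind of statement about finite limits and regular epimorphisms); the same caveat applies to your converse. There, your chase lifting $((y_1,y_0),(y_1,y_2))$ along the regular epimorphism $\Eq[r_0]\onto\Eq[\bar{r}_0]$ is precisely the paper's diagonal fill-in producing the transitivity witness $\tau_T\colon \Eq[t_0]\to T$ through the monomorphism $(t_0,t_1)$, just phrased pointwise; both are sound, and your preliminary verification that squares of type~\eqref{GPushout} are pushouts (via $h = g\cdot s'\cdot y$) is routine and consistent with what the paper takes for granted in Definition~\ref{GoursatPush}.
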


\begin{proof}
If $\mathbb E$ is a Goursat category we know that the regular image $x(\Eq[f])$ can be computed as follows:
\begin{align*} &x(\Eq [f]) = x  \circ f^\circ \circ f \circ x^\circ = x  \circ x^\circ \circ x  \circ f^\circ \circ f \circ x^\circ \circ x \circ x^\circ\\
&= x  \circ  f^\circ \circ f \circ  x^\circ \circ x  \circ  f^\circ \circ f \circ x^\circ = x  \circ  f^\circ \circ y^\circ \circ y \circ f  \circ x^\circ\\
&= x  \circ  x^\circ \circ {f'}^\circ  \circ f' \circ x \circ x^\circ = {f'}^\circ  \circ f' = \Eq[f']
\end{align*}
where we have used the Goursat assumption, the commutativity of the diagram~\eqref{GPushout}, and fact that $x$ is a regular epimorphism, so that $x  \circ  x^\circ = 1_{X'}$.

For the converse, consider an equivalence relation $S$ on an object $X$ and a regular epimorphism $f \colon X \twoheadrightarrow Y$. The regular image $f(S) = T$ is certainly reflexive and symmetric, and by Proposition~\ref{regular-image} it suffices to show that it is also transitive.
Since $S$ is symmetric and transitive, we know that there exists a morphism $\tau_S$ such that the diagram $$ \xymatrix@=25pt{
	\Eq[s_0] \ar@<-1,ex>[d]_{p_0}\ar@<+1,ex>[d]^{p_1} \ar@{.>} [r]^-{\tau_S}
	& S \ar@<-1,ex>[d]_{s_0}\ar@<+1,ex>[d]^{s_1}\\
	S \ar[r]_{s_1}\ar[u] & X \ar[u]
}
$$
commutes. Moreover, the diagram 
$$ \xymatrix@C=2,5pc@R=2pc{ 
		S \ar@<-1ex>[d]_{s_0} \ar@{->>}[r]^-{\overline{f}} 
		& f(S) = T \ar[d]_{t_0} \\
		X \ar[u]_{} \ar@{->>}[r]_{f} & Y\ar@<-1ex>[u]_{}  }$$
		is of type~\eqref{GPushout}, the upward pointing arrows being the morphisms giving the reflexivity of $S$ and $T$, respectively. It follows that the factorization $\tilde{f}$ induced by the universal property of the kernel pair $\Eq[t_0]$ - and making the square
		$$ \xymatrix{ 
		\Eq[s_0]  \ar[d]_{\overline{f} \tau_S} \ar@{->>}[r]^{\tilde{f}} 
		& \Eq[t_0] \ar@{.>}[ld]_{\tau_T} \ar[d]^{(t_1 \times t_1)(p_0,p_1)} \\
		{T \, \, } \ar@{>->}[r]_-{(t_0, t_1)} & Y \times Y}$$
commute - is a regular epimorphism by the assumption. Since $(t_0, t_1) $ is a monomorphism, it follows that there is a unique morphism $\tau_T$ making the diagram above commute, and $\tau_T$ makes the diagram
$$ \xymatrix@=25pt{
	\Eq[t_0]  \ar@<-1,ex>[d]_{p_0}\ar@<+1,ex>[d]^{p_1} \ar@{.>} [r]^-{\tau_T}
	& T \ar@<-1,ex>[d]_{t_0}\ar@<+1,ex>[d]^{t_1}\\
	T  \ar[r]_{t_1}\ar[u] & Y\ar[u]
}
$$
commute. It follows that the relation $T$ is transitive, as desired.
 \end{proof}
 
 When $\mathbb V$ is a variety of universal algebras, a direct application of this characterization to a suitable Goursat pushout in the category of free algebras - a similar argument to the one used above to prove Theorem~\ref{MaltseVar} - yields a categorical proof (see~\cite{Gran-Rodelo}) of the following well known Theorem.

\begin{theorem}
 For a variety $\mathbb V$ the following conditions are equivalent:
 \begin{enumerate}
 \item $\mathbb V$ is a $3$-permutable variety;
 \item in the algebraic theory of $\mathbb V$ there are two quaternary terms $p$ and $q$ satisfying the identities $p(x,y,y,z)=x$, 
 $q(x,y,y,z)=z$, and $p(x,x,y,y) = q(x,x,y,y)$.
 \end{enumerate}
 \end{theorem}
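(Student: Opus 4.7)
The plan is to mirror the strategy used for Theorem~\ref{MaltseVar}, but in the Goursat world: the easy implication $(2)\Rightarrow(1)$ reduces to a direct computation with the terms, while $(1)\Rightarrow(2)$ will be obtained by interpreting a well-chosen Goursat pushout living inside the full subcategory of free algebras, via the characterization of Proposition~\ref{Gran-Rodelo}.

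For $(2)\Rightarrow(1)$, given congruences $R$ and $S$ on an algebra and elements $aRcSdRb$, I witness $a\,(S\circ R\circ S)\,b$ by the chain
\begin{align*}
a &= p(a,c,c,b)\stackrel{S}{\equiv}p(a,c,d,b)\stackrel{R}{\equiv}p(c,c,b,b)\\
  &=q(c,c,b,b)\stackrel{R}{\equiv}q(a,c,d,b)\stackrel{S}{\equiv}q(a,c,c,b)=b.
\end{align*}
The outer equalities use $p(x,y,y,z)=x$ and $q(x,y,y,z)=z$; the middle equality is the third identity applied at $(c,b)$; and each marked relation follows by applying $p$ or $q$ componentwise to tuples that are coordinate-wise related under $R$ or $S$. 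This gives $R\circ S\circ R\subseteq S\circ R\circ S$, and instantiating the inclusion at the pair $(S,R)$ yields the reverse inclusion.

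For $(1)\Rightarrow(2)$, let $X$ be the free algebra on one generator and $F_n$ the free algebra on $n$ generators (with fixed generators $x_1,\dots,x_4$ for $F_4$, $y_1,y_2,y_3$ for $F_3$, $z_1,z_2$ for $F_2$). I consider the commutative square of free algebras
$$\xymatrix@C=2,5pc@R=2pc{
F_4 \ar@<-1ex>[d]_{\sigma} \ar@{->>}[r]^{\alpha} & F_3 \ar[d]^{\beta} \\
F_2 \ar[u]_{s} \ar@{->>}[r]_{\sigma'} & X \ar@<-1ex>[u]_{s'}}$$
in which $\alpha$ identifies the second and third generators of $F_4$, $\sigma$ identifies the first with the second and the third with the fourth, while $\beta$ and $\sigma'$ are the codiagonals, and the sections are chosen as $s(z_1)=x_2$, $s(z_2)=x_3$, $s'(x)=y_2$. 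Both composites $F_4\to X$ collapse all four generators to a single element, so the square is a pushout in $\mathbb V$; its columns are split epis, its rows are regular epis, and the compatibility $\alpha\cdot s=s'\cdot\sigma'$ is a direct check. Since $\mathbb V$ is $3$-permutable it is Goursat, so by Proposition~\ref{Gran-Rodelo} the induced comparison $\Eq[\sigma]\twoheadrightarrow\Eq[\beta]$ is a regular epimorphism, i.e.\ surjective. The pair $(y_1,y_3)$ lies in $\Eq[\beta]$ because $\beta$ is a codiagonal, hence lifts to some $(p,q)\in\Eq[\sigma]$ with $\alpha(p)=y_1$, $\alpha(q)=y_3$ and $\sigma(p)=\sigma(q)$. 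Reading these three equalities as substitution identities on the quaternary terms $p,q\in F_4$ produces exactly $p(x,y,y,z)=x$, $q(x,y,y,z)=z$ and $p(x,x,y,y)=q(x,x,y,y)$.

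The main obstacle is the design of the square: one must arrange simultaneously that the pushout be $X$ (so that $(y_1,y_3)$ is available in $\Eq[\beta]$), that $\alpha$ encode the first two desired identities on the right, that $\sigma$ encode the third on the left, and that the resulting data admit sections satisfying the non-trivial compatibility $\alpha s=s'\sigma'$. This last constraint is what forces the specific sections above: $\alpha$ sends $x_1$ and $x_4$ to $y_1$ and $y_3$, which differ from the common value $s'\sigma'(z_i)$ of any such $s'$, whereas $\alpha(x_2)=\alpha(x_3)=y_2$ matches.
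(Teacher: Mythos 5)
Your argument is correct and is precisely the proof the paper alludes to: it applies the Goursat-pushout characterization (Proposition~\ref{Gran-Rodelo}) to a square of free algebras in exact analogy with Theorem~\ref{MaltseVar}, and your choices of $\alpha$, $\sigma$ and the sections make all the required commutativities hold, while the $(2)\Rightarrow(1)$ direction is the standard Hagemann--Mitschke computation (using transitivity of $R$ to merge the two middle $R$-steps). The only difference is that you supply the details which the paper delegates to~\cite{Gran-Rodelo}.
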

 
 More generally, for any $n \ge2$, other characterizations of $n$-per\-mut\-able varieties in terms of ternary operations and identities are considered in~\cite{GR2, JR} using categorical arguments (see also the references therein).
 
 We conclude this section with another characterization of Goursat categories, whose proof is based on the calculus of relations and on the notion of Goursat pushout. It concerns commutative diagrams of the form 
  \begin{equation}\label{3x3} \xymatrix@=25pt{
	\Eq[\phi] \ar@<-1,ex>[d]_{p_0}\ar@<+1,ex>[d]^{p_1} \ar@<-1,ex>[r]_{\overline{h}_0}\ar@<+1,ex>[r]^{\overline{h}_1}
	& \Eq[f]  \ar@<-1,ex>[d]_{p_0}\ar@<+1,ex>[d]^{p_1} \ar[r]^{\overline{h}}\ar[l] & \Eq[g] \ar@<-1,ex>[d]_{p_0}\ar@<+1,ex>[d]^{p_1} \\
	\Eq[h] \ar@{->>}[d]_{\phi} \ar@<-1,ex>[r]_{p_0} \ar@<+1,ex>[r]^{p_1} \ar[u] & A \ar[u]\ar@{->>}[d]_{f} \ar@{->>}[r]^h \ar[l]& C\ar[u] \ar@{->>}[d]^{g} \\
	K  \ar@<-1,ex>[r]_{k_0} \ar@<+1,ex>[r]^{k_1} &  B \ar@{->>}[r]_k \ar[l] & D
}
\end{equation}
(i.e. for any $i, j \in \{0,1\}$, $p_i p_j = p_j \overline{h}_i$, $h p_j =  p_j \overline{h}$, $f p_i = k_i \phi$, and $kf = gh$) where the three columns and the middle row are exact (i.e. regular epimorphisms equipped with their kernel pairs): 

\begin{theorem}\label{Lack, Gran-Rodelo}\cite{Lack, Gran-Rodelo}
 For a regular category $\mathbb E$ the following conditions are equivalent:
 \begin{itemize}
 \item $\mathbb E$ is a Goursat category;
 \item the \emph{Upper $3 \times 3$ Lemma} holds in $\mathbb E$: given any commutative diagram~\eqref{3x3},
the upper row is exact whenever the lower row is exact;
\item the \emph{Lower $3 \times 3$ Lemma} holds in $\mathbb E$: given any commutative diagram~\eqref{3x3}, 
the lower row is exact whenever the upper row is exact;
\item the \emph{$3 \times 3$ Lemma} holds in $\mathbb E$: given any commutative diagram~\eqref{3x3}, 
the lower row is exact if and only if the upper row is exact.
\end{itemize}
\end{theorem}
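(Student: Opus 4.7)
The plan is to observe first that $(4)$ is literally the conjunction of $(2)$ and $(3)$, so the task reduces to proving $(1)\Leftrightarrow(3)$ (the Lower Lemma equivalence) and $(1)\Leftrightarrow(2)$ (the Upper Lemma equivalence). I will sketch the strategy for $(1)\Leftrightarrow(3)$ in some detail; the other equivalence is handled by a dual argument using the same tools.

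For $(1)\Rightarrow(3)$, assume that $\mathbb E$ is Goursat and that, in the given $3\times 3$ diagram, the columns, the middle row, and the upper row are all exact. That $k$ is a regular epimorphism is immediate, since $k\circ f=g\circ h$ is a composition of regular epimorphisms and $f$ itself is a regular epimorphism. For the kernel pair condition I would argue entirely in the calculus of relations. Using $f\circ f^{\circ}=1_B$, one gets $k=g\circ h\circ f^{\circ}$ and hence
\[
\Eq[k]=k^{\circ}\circ k=f\circ h^{\circ}\circ\Eq[g]\circ h\circ f^{\circ}.
\]
Exactness of the upper row identifies $\Eq[g]$ with the regular image of $\Eq[f]$ under $h$, so $\Eq[g]=h\circ f^{\circ}\circ f\circ h^{\circ}$. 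Exactness of the left column, together with the identities $k_i\circ\phi=f\circ p_i$ (for the kernel-pair projections of $h$), expresses $K$ as the regular image of $\Eq[h]$ under $f$, namely $K=f\circ h^{\circ}\circ h\circ f^{\circ}$. Substituting yields $\Eq[k]=K\circ K$. Since $\mathbb E$ is Goursat, Proposition~\ref{regular-image} makes $K$ an equivalence relation, so $K\circ K=K=\Eq[k]$, as required.

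For $(3)\Rightarrow(1)$, I will verify the characterisation provided by Proposition~\ref{regular-image}: given a regular epimorphism $f\colon X\twoheadrightarrow Y$ and an equivalence relation $R$ on $X$, I must show that $f(R)$ is an equivalence relation. The idea is to set $A=R$, take $h=d_1^R\colon R\to X$ (a split, hence regular, epimorphism by reflexivity of $R$), and assemble a $3\times 3$ diagram in which the bottom-left object $K$ is forced to coincide with $f(R)$. The map $\phi$ of the left column is obtained as the regular-image factorisation of $(f\circ p_0,\, f\circ p_1)\colon\Eq[h]\to B\times B$ through its monomorphic part $(k_0,k_1)$; for the middle column, take $B=Y$ and $f$ itself; the right column is then produced by the induced regular epimorphism from $C$ (the codomain of $h$) to $D$, defined via the image of the composite $g\circ h=k\circ f$. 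The middle row and all three columns are exact by construction, and the Lower $3\times 3$ Lemma forces the bottom row to be exact. In particular $K$ is then a kernel pair, hence an equivalence relation, and by construction $K=f(R)$.

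The argument for $(1)\Leftrightarrow(2)$ proceeds dually: for $(1)\Rightarrow(2)$, a similar relational computation, combined with Proposition~\ref{Gran-Rodelo} applied to the square $k\circ f=g\circ h$ to produce the induced comparison between kernel pairs as a regular epimorphism, shows that $\overline{h}\colon\Eq[f]\to\Eq[g]$ is a regular epimorphism whose kernel pair is exactly $\Eq[\phi]$; for $(2)\Rightarrow(1)$, one instantiates Proposition~\ref{regular-image} through a dual $3\times 3$ diagram and extracts the Goursat property from the exactness of the (now freely chosen) upper row. The main obstacle I anticipate lies in the implications $(3)\Rightarrow(1)$ and $(2)\Rightarrow(1)$: one must construct a sufficiently flexible $3\times 3$ diagram from an \emph{arbitrary} equivalence relation $R$, and in the purely regular (rather than exact) setting $R$ need not be effective, so some care is required to produce the left column and the right column without tacitly appealing to a quotient $X/R$.
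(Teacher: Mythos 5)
First, a remark on the comparison itself: the paper does not actually prove this theorem --- it is stated with references to \cite{Lack, Gran-Rodelo}, and the surrounding text only indicates that the proof rests on the calculus of relations and on the notion of Goursat pushout. Your overall strategy (reduce to the equivalences of (1) with the Upper and Lower Lemmas, prove the forward implications by a relational computation, and obtain the converses via Proposition~\ref{regular-image}) is consistent with that indication and with the cited sources. Your computation for $(1)\Rightarrow(3)$ is essentially right: $\Eq[k]=f\circ h^{\circ}\circ\Eq[g]\circ h\circ f^{\circ}$, exactness of the upper row gives $\Eq[g]=h(\Eq[f])$, and Goursatness collapses $f(\Eq[h])\circ f(\Eq[h])$ to $f(\Eq[h])$. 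However, the step ``exactness of the left column expresses $K$ as the regular image $f(\Eq[h])$'' is not justified as stated: the left column only provides a regular epimorphism $\phi\colon\Eq[h]\twoheadrightarrow K$ over $B\times B$, and to identify $K$ with the image one must prove that $(k_0,k_1)$ is jointly monic. This needs the exactness of the upper row a second time (it identifies $\Eq[\phi]$, equipped with its \emph{vertical} projections, with the kernel pair of $(f p_0,f p_1)$, which by the left column is also the kernel pair of $\phi$); the hypothesis is available in this direction, but the argument is missing and is misattributed to the left column alone.

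The substantive gap lies in the converse directions, which you only sketch. Your construction for $(3)\Rightarrow(1)$ does not typecheck: with $A=R$ and $h=d_1^R\colon R\to X$, the middle column must be a regular epimorphism out of $R$, so it cannot be ``$f$ itself''; presumably you intend $f\circ d_0^R\colon R\to Y$. More seriously, to invoke the Lower Lemma you must exhibit the right-hand column $g\colon X\twoheadrightarrow D$ and the bottom map $k$ with $k\circ f\circ d_0^R=g\circ d_1^R$, and you must verify that the \emph{upper} row of your diagram is exact, i.e.\ that the induced $\overline{h}$ between the kernel pairs of the columns is a regular epimorphism with kernel pair $\Eq[\phi]$. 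In a merely regular category neither the existence of $D$ (one cannot quotient by $f(R)$, which is not yet known to be an equivalence relation) nor the exactness of the top row comes for free, and establishing the latter in general is essentially the Goursat-pushout property one is trying to derive --- a circularity you acknowledge but do not resolve; the standard way out is to build the diagram so that the hypothesis row is exact by construction (e.g.\ split), and this is precisely the content you omit. Finally, your appeal to Proposition~\ref{Gran-Rodelo} for the square $kf=gh$ in $(1)\Rightarrow(2)$ is a misapplication: that proposition requires the vertical morphisms to be \emph{split} epimorphisms, whereas $f$ and $g$ are only regular epimorphisms here (the relational computation in its proof does extend to this case, but that extension must be stated and used explicitly, together with a separate identification of the kernel pair of $\overline{h}$ with $\Eq[\phi]$). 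As it stands the proposal is a plausible outline of the arguments in \cite{Lack, Gran-Rodelo}, but the converse implications, which carry the real content of the theorem, are not proven.
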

 
 Note that this homological lemma was not foreseen in the original project in~\cite{CLP}.
 Regular Mal'tsev categories can also be characterized by a stronger version of the denormalized $3 \times 3$ Lemma, called the \emph{Cuboid Lemma}~\cite{Cuboid}.

\begin{remark}
In a similar way as Mal'tsev categories were first defined in the regular context and later studied in the finitely complete context, Goursat categories can be defined without the assumption of regularity, see~\cite{Bourn14}.
\end{remark}

\section{Baer sums in Mal'tsev categories}\label{section Baer sums in Mal'tsev categories}

In this last section, we shall be interested in those extensions, namely regular epimorphisms $f\colon X\onto Y$, which have abelian kernel equivalence relations, and we shall show that, from the Mal'tsev context, emerges a very natural notion of Baer sums. Such an extension is actually nothing but an affine object with global support in the slice category  $\EE/Y$. So, we shall show that, in any Mal'tsev category being sufficiently exact, we are able to associate, with any affine object with global support, an abelian object, called its \emph{direction}, and to show  as well that the set (up to isomorphisms) of the affine objects with global support and a given direction $A$ is endowed with a canonical abelian group structure on the (non-Mal'tsev) general model of~\cite{Bourn3}.
By sufficiently exact, we mean the following:

\begin{definition}\cite{Bourn5}
	A regular category $\EE$ is said to be \emph{efficiently regular} when any equivalence relation $R$ on an object $X$ which is a subobject $i\colon R\into \Eq[f]$ of an effective equivalence relation is effective as well as soon as the monomorphism $i$ is regular, i.e. is the equalizer of some pair of morphisms. 
\end{definition}

The categories $\mathsf{Ab}(\mathsf{Top})$ and $\mathsf{Gp}(\mathsf{Top})$ of (resp. abelian) topological groups are examples of non-exact efficiently regular categories. The major interest of such a category is that any discrete fibration between equivalence relations $R\to \Eq[g]$ makes $R$ effective as well~\cite{Bourn5}. Note that this latter property could also be guaranteed by the assumption that the base category $\EE$ is regular and that regular epimorphisms in $\EE$ are effective descent morphisms (see~\cite{JST}, for instance, for more details).

\emph{In this section we shall suppose that $\EE$ is an efficiently regular Mal'tsev category}. Take now any affine object $X$ with global support (namely such that the terminal map $\tau_X\colon  X\to 1$ is a regular epimorphism) and consider the following diagram where $p\colon  X\times X\times X\to X$ is the internal Mal'tsev operation on $X$ giving rise to the affine structure:

$$ \xymatrix@=25pt{
	X\times X \times X \ar@<-1,ex>[dd]_{p_0^X}\ar@<+1,ex>[dd]^{(p,p_1^X\cdot p_1^X)} \ar@<-1,ex>[r]_-{p_2^X}\ar@<+1,ex>[r]^-{(p_0^X\cdot p_1^X,p)}
	& X\times X \ar@<-1,ex>[dd]_{p_0^X}\ar@<+1,ex>[dd]^{p_1^X} \ar[l] \ar@{.>>}[r]^-{q_p^X} &  A \ar@{.>>}@<+1,ex>[dd]^{\tau_A}\\
	&&&&\\
	X\times X \ar@<-1,ex>[r]_-{p_1^X} \ar@<+1,ex>[r]^-{p_0^X} \ar[uu]_{} & X\ar@{->>}[r]_-{\tau_X}\ar[l]\ar[uu]_{} & 1 \ar@{.>}[uu]^{0_A} 
}
$$
\begin{definition}\cite{Bourn3}
	The upper horizontal reflexive relation (which is an equivalence relation) is called the Chasles relation $Ch_p$ associated with the internal Mal'tsev operation $p$.
\end{definition}

In set-theoretic terms we get $(x,p(x,y,z))Ch_p(y,z)$ or, in other words, $(x,y)Ch_p(x',y')$ if and only if $y=p(x,x',y')$. The operation $p$ is commutative (which is the case in any Mal'tsev category) if and only if we get the equivalence:
$$(x,y)Ch_p(x',y') \iff (y',y)Ch_p(x',x)$$

Since $\EE$ is efficiently regular and since the left hand side square indexed by $0$ is a discrete fibration between equivalence relations, the equivalence relation $Ch_p$ is effective, and so has a quotient $q_p^X$ which, since $\tau_X$ is a regular epimorphism, produces the split epimorphism $(\tau_A,0_A)$ and makes the right hand side square a pullback.  The vertical right hand side part is necessarily a group in $\EE$ as a quotient of the vertical groupoid (actually the equivalence relation) $\nabla_X$. This group is abelian by Corollary~\ref{aff}.

\begin{definition}\cite{Bourn3}
	The abelian group $A$ is called the \emph{direction} of the affine object $X$ with global support and will be denoted by $d(X)$.
\end{definition}

\begin{proposition}
	Given any abelian group $A$, its direction is $A$.
\end{proposition}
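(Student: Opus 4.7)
The plan is to compute the Chasles relation $Ch_p$ on $A \times A$ explicitly when $A$ is an abelian group, and to show that the quotient $q_p^A$ is nothing but the ``subtraction'' morphism $A \times A \to A$. By Proposition~\ref{maladd}, the canonical internal Mal'tsev operation on an abelian group $A$ is $p(x,y,z) = x - y + z$ (in additive notation), so $Ch_p$ is governed set-theoretically by the single equation $y - x = y' - x'$.

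Concretely, I would define $\sigma \colon A \times A \to A$ as the composite
$$\xymatrix{A \times A \ar[r] & A \times A \times A \ar[r]^-{p} & A,}$$
where the first morphism is $\langle 0_A \cdot \tau_{A \times A}, p_0^A, p_1^A \rangle$; intuitively $\sigma(x,y) = y - x$. Two claims then do the work. First, $\Eq[\sigma]$ coincides with $Ch_p$, since by definition $(x,y) Ch_p (x',y')$ iff $y = p(x, x', y')= x - x' + y'$, equivalently $\sigma(x,y) = \sigma(x',y')$. Second, $\sigma$ is a split regular epimorphism, with section $y \mapsto (0_A, y)$; in fact, $(p_0^A, \sigma) \colon A \times A \to A \times A$ is an isomorphism (inverse $(x, a) \mapsto (x, x+a)$), which furthermore shows that the square with $\sigma$ and $\tau_A$ agrees, up to this iso, with the standard product pullback, and is therefore a pullback.

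By uniqueness of quotients of effective equivalence relations, $\sigma$ realises $q_p^A$, yielding a canonical isomorphism $d(A) \cong A$. The only step requiring any care is checking that this isomorphism transports the abelian group structure on $d(A)$ (induced from $\nabla_A$ as in the construction preceding the statement) back onto the original structure on $A$; this reduces to verifying that the section $0_A \colon 1 \to d(A)$ corresponds to the original zero of $A$, which follows from the computation $\sigma \cdot \Delta_A = 0 \cdot \tau_A$ on the diagonal. No substantial obstacle arises, as every verification is a routine equational check with $p(x,y,z) = x - y + z$.
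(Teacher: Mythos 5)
Your proof is correct and follows essentially the same route as the paper: the paper simply exhibits the morphism $p_1^A-p_0^A\colon A\times A\to A$ (your $\sigma$) in the defining diagram and observes that it realises the quotient of the Chasles relation, making the right-hand square a pullback. Your write-up merely spells out the verifications (that $\Eq[\sigma]=Ch_p$, that $\sigma$ is a split regular epimorphism, and that the induced group structure is the original one) which the paper leaves implicit.
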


\begin{proof}
The diagram
$$ \xymatrix@=20pt{
	A\times A \ar@<-1,ex>[d]_{p_0^A}\ar@<+1,ex>[d]^{p_1^A}  \ar[rr]^-{p_1^A-p_0^A} &&  A \ar@<+1,ex>[d]^{\tau_A}\\
	A\ar@{->>}[rr]_{\tau_A}\ar[u]_{}  && 1 \ar[u]^{0_A} 
}
$$
shows that the direction is indeed $A$.
\end{proof}

Since the inclusion $\mathsf{Aff}(\EE) \into \EE$ is full, any morphism $f\colon  X\to X'$ between affine objects produces a morphism $d(f)$ making the following diagram commute:
$$ \xymatrix@=20pt{
	X \times X \ar[d]_-{f \times f} \ar@{->>}[r]^-{q_p^X} & A \ar[d]^-{d(f)}\\
	X' \times X'\ar@{->>}[r]_-{q_p^{X'}}  & A' 
}
$$

\begin{proposition}
	The group homomorphism $d(f)$ is an isomorphism if and only if $f$ is an isomorphism.
\end{proposition}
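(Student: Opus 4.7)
The ``only if'' direction follows immediately from functoriality of $d$: if $f$ is an iso then $d(f)$ admits the inverse $d(f^{-1})$.

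For the converse, assume $d(f)\colon A\to A'$ is an iso. The plan is to pull back along $\tau_X\colon X\to 1$ (a regular epi, since $X$ has global support), thereby trivializing $X$ and $X'$ as torsors over $X$ so that $f$ becomes identified with $d(f)$, and then descend the resulting isomorphism. The defining pullback of $A$ produces an iso
$$\phi_X=(p_0^X,\, q_p^X)\colon X\times X\xrightarrow{\sim} X\times A$$
over $X$. To trivialize $X'$ over $X$ analogously, we use $f$ as a varying basepoint: pasting the pullback of $p_0^{X'}\colon X'\times X'\to X'$ along $f\colon X\to X'$ with the defining pullback of $A'$ produces the outer pullback
$$\xymatrix@R=15pt{X\times X' \ar[rr]^{q_p^{X'}\cdot(f\times 1_{X'})} \ar[d]_{p_0} && A' \ar[d]^{\tau_{A'}} \\ X \ar[rr]_{\tau_X} && 1}$$
yielding an iso $\Psi=(p_0,\,q_p^{X'}\cdot(f\times 1_{X'}))\colon X\times X'\xrightarrow{\sim} X\times A'$ over $X$. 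A direct computation, using the naturality identity $q_p^{X'}\cdot(f\times f)=d(f)\cdot q_p^X$ together with the definition of $\phi_X$, shows
$$\Psi\cdot(1_X\times f)\cdot\phi_X^{-1}=1_X\times d(f).$$
Since $d(f)$ is iso, so is the right-hand side; hence $1_X\times f\colon X\times X\to X\times X'$ is iso in $\EE$.

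The last step descends from $1_X\times f$ iso to $f$ iso. For $f$ being a regular epi: the equality $f\cdot p_1^X=p_1^{X\times X'}\cdot(1_X\times f)$ exhibits $f\cdot p_1^X$ as the composite of an iso with the regular epi $p_1^{X\times X'}$ (itself the pullback of $\tau_X$ along $\tau_{X'}$), hence as a regular epi; since $p_1^X$ is split by $\Delta_X$, $f$ is therefore an extremal, and thus regular, epimorphism. For $f$ being a mono: $1_X\times f$ being a monomorphism implies $\Eq[f]=\Delta_X$, since both subobjects of $X\times X$ are sent into the image of the section $(1_X,f)\colon X\hookrightarrow X\times X'$ under the mono $1_X\times f$. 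A monomorphism which is a regular epimorphism is an isomorphism. The main technical obstacle is the construction of $\Psi$ together with the verification of the key identification.
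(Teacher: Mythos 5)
Your proof is correct. The core of your argument coincides with the paper's: showing that $1_X\times f$ is an isomorphism by pasting the two pullback squares that define $q_p^X$ and $q_p^{X'}$ is exactly equivalent to the paper's claim that the square with horizontal arrows $f\times f$ and $f$ and vertical arrows $p_0^X$ and $p_0^{X'}$ is a pullback (your $1_X\times f$ is the canonical comparison to $X\times_{X'}(X'\times X')\cong X\times X'$), and your identity $\Psi\cdot(1_X\times f)\cdot\phi_X^{-1}=1_X\times d(f)$ is verified componentwise from the naturality square $q_p^{X'}\cdot(f\times f)=d(f)\cdot q_p^X$, just as the paper's cube argument does. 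Where you genuinely diverge is the descent step: the paper invokes the Barr--Kock theorem (the pullback square above exhibits $f\times f\colon \nabla_X\to\nabla_{X'}$ as a discrete fibration over $\tau_{X'}\cdot f=\tau_X$ with $\tau_X$ a regular epimorphism, forcing $f$ to be the pullback of $1_1$, hence an iso), whereas you extract regular epimorphy of $f$ from cancellation of extremal epimorphisms applied to $f\cdot p_1^X=p_1\cdot(1_X\times f)$ and monomorphy from $1_X\times f$ being monic. Your route is more elementary and self-contained (it only uses that extremal epis are regular in a regular category, plus the global supports to know the projections are regular epis), at the cost of being slightly longer; the paper's route is shorter but leans on Barr--Kock, which it has already cited elsewhere. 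Both are valid in an efficiently regular Mal'tsev category.
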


\begin{proof}
It is clear that if $f$ is an isomorphism, so is $d(f)$. Conversely suppose $d(f)$ is an isomorphism. Consider the following diagram:
$$\xymatrix@=8pt{
	{X\times  X\;} \ar[rrd]_{f\times f}  \ar@<-1ex>[ddd]_{p_0^X} \ar@<1ex>[ddd]^{p_1^X}  \ar[rrrrr]^{q_p^{X}} &&&&&  { A\;} \ar[rrd]^{d(f)}_{\cong}  \ar@<1ex>[ddd]^{}\\
	&& X'\times X' \ar@<-1ex>[ddd]_<<<<{p_0^{X'}} \ar@<1ex>[ddd]^<<<<{p_1^{X'}} \ar[rrrrr]_<<<<<<<<<{q_p^{X'}}  &&&&& A' \ar@<1ex>[ddd]^{}  && \\
	&&& &&&&&&\\ 
	X \ar[rrd]_{f}  \ar@{->>}[rrrrr]^>>>>>{}  \ar[uuu]_{}  &&&&& 1 \ar[uuu]_{} \ar@{=}[rrd]  \\
	&&  X'  \ar@{->>}[rrrrr]_{}  \ar[uuu]_{}  &&&&& 1 \ar[uuu]_{} 
}
$$
The front and back squares indexed by $0$ being pullbacks, and $d(f)$ being an isomorphism, the left hand side quadrangle is a pullback. But $X$ and $X'$ having global supports, the Barr-Kock Theorem makes the following square a pullback
$$ \xymatrix@=30pt{
	X \ar[d] \ar[r]^{f} & X' \ar[d]\\
	1\ar@{=}[r]  & 1 
}
$$
and consequently makes $f$ an isomorphism.
\end{proof}

Moreover the fact that the right hand side square defining $A$ is a pullback gives $X$ the structure of an $A$-torsor \emph{which is controlled by the choice of the quotient map} $q_p^X$. Accordingly we get an internal regular epic discrete fibration $q^X\colon  \nabla_X\onto A$. Let us denote by $\mathsf{AbTors}(\EE)$ the category whose objects are the pairs $(X,q^X)$ where $q^X\colon  \nabla_X\onto A$ is a regular epic discrete fibration above an abelian group $A$ (which obviously implies that the object $X$ is affine with global support and direction $A$), and whose morphism are the pairs $(f\colon  X\to Y,g\colon  A\to B)$ of a morphism $f$ and a group homomorphism $g$ such that $g\cdot q^X=q^Y\cdot (f\times f)$. 

Let us denote by $\mathsf{Aff}_*(\EE)$ the full subcategory of $\mathsf{Aff} (\EE)$ whose objects have a global support. The previous construction produces a functor $\Phi\colon  \mathsf{Aff}_*(\EE) \to \mathsf{AbTors}(\EE)$ which is an equivalence of categories. Furthermore there is an obvious forgetful functor $U\colon \mathsf{AbTors}(\EE)\to \mathsf{Ab}(\EE)$ such that $d=U\cdot \Phi$. We shall now investigate the properties of the functor $U$. We immediately get:

\begin{proposition}
	Given  any efficiently regular Mal'tsev category $\EE$, the functor $U$ is conservative, it preserves the finite products and the regular epimorphisms. It preserves the pullbacks when they exist, and consequently reflects them.	
\end{proposition}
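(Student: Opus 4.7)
My approach is to exploit the equivalence $\Phi\colon \mathsf{Aff}_*(\EE) \xrightarrow{\simeq} \mathsf{AbTors}(\EE)$ just introduced: because $d = U \cdot \Phi$, the functor $U$ inherits any of the stated properties from the direction functor $d$, so I will systematically verify each property for $d$ and transport it along~$\Phi$.

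Conservativity of $d$ is exactly the content of the previous proposition, so it is immediate. For preservation of finite products, the terminal object presents no difficulty since $d(1)=0$; for binary products I plan to observe that the internal Mal'tsev operation on $X\times X'$ is componentwise, so that under the canonical isomorphism $(X\times X')^2\cong X^2\times X'^2$ the Chasles relation on $X\times X'$ splits as the product of the Chasles relations on $X$ and on $X'$, whence passing to quotients gives $d(X\times X')\cong d(X)\times d(X')$. For preservation of regular epimorphisms I will invoke the naturality square
$$d(f)\cdot q_p^X \;=\; q_p^{X'}\cdot (f\times f).$$
When $f$ is a regular epimorphism in $\EE$, so is $f\times f$ by regularity of $\EE$, and $q_p^{X'}$ is a regular epimorphism by construction; thus the composite is a regular epimorphism, and since $q_p^X$ is one as well this forces $d(f)$ itself to be a regular epimorphism.

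The core of the argument is preservation of pullbacks. When a pullback $P=X\times_ZX'$ of $f\colon X\to Z$ and $f'\colon X'\to Z$ exists in $\mathsf{Aff}_*(\EE)$, it must coincide with the ordinary pullback in $\EE$ (using the fullness of $\mathsf{Aff}(\EE)\hookrightarrow\EE$ together with Corollary~\ref{aff}), equipped with its componentwise Mal'tsev operation. A calculation parallel to the product case should show that the Chasles relation on $P$ is the pullback of the Chasles relations on $X$ and on $X'$ over the one on $Z$; passing to the efficient quotients should then yield $d(P)\cong d(X)\times_{d(Z)}d(X')$. Reflection of pullbacks will follow formally from preservation plus conservativity: any square in $\mathsf{AbTors}(\EE)$ whose $U$-image is a pullback in $\mathsf{Ab}(\EE)$ admits a canonical comparison morphism to the genuine pullback in $\mathsf{AbTors}(\EE)$, which $U$ carries to an isomorphism and which is therefore already one.

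The principal obstacle I anticipate is precisely this pullback-preservation step, in particular the compatibility of the Chasles-relation quotient with pullbacks. Here I will need to make essential use of the efficiently regular hypothesis together with the Barr--Kock theorem in order to guarantee that the induced morphism $d(P)\to d(X)\times_{d(Z)} d(X')$ is an isomorphism, rather than only a monomorphism coming from the universal property.
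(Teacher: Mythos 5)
Your proposal is correct; note that the paper itself offers no argument for this proposition (it is introduced with ``We immediately get''), so there is no official proof to diverge from, and your route through $d=U\cdot\Phi$ and the equivalence $\Phi$ is the natural one. Conservativity is indeed just the preceding proposition, the finite-product and regular-epimorphism cases go through exactly as you describe, and you have correctly isolated the pullback case as the only point requiring real work. For that step, the decisive tool is not so much Barr--Kock as the defining pullback square of the direction, namely the isomorphism $(p_0^X,q_p^X)\colon X\times X\stackrel{\sim}{\longrightarrow} X\times d(X)$, which is natural in $X$ because $d(f)\cdot q_p^X=q_p^Z\cdot(f\times f)$: writing $P=X\times_Z X'$, it identifies $P\times P=(X\times X)\times_{Z\times Z}(X'\times X')$ with $P\times\bigl(d(X)\times_{d(Z)}d(X')\bigr)$, so that the canonical map $\theta\colon P\times P\to d(X)\times_{d(Z)}d(X')$ becomes a product projection, hence a regular epimorphism since $P$ has global support and regular epimorphisms are pullback-stable. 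Since the componentwise description of the Mal'tsev operation on $P$ shows that the kernel pair of $\theta$ is exactly the Chasles relation $Ch_{p_P}$ (your ``pullback of Chasles relations''), the map $\theta$ is the quotient by $Ch_{p_P}$ and the comparison $d(P)\to d(X)\times_{d(Z)}d(X')$ is an isomorphism; efficient regularity is only needed to guarantee that $d(P)$ exists at all. Your formal deduction of reflection from preservation plus conservativity is fine, with the understanding (implicit in the statement) that it applies to squares over cospans which do admit a pullback in $\mathsf{Aff}_*(\EE)$.
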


The restriction about the existence of pullbacks comes from the fact that the objects with global support are not stable under pullback in general. Here comes the main result of this section:

\begin{theorem}\cite{Bourn3}
	Given  any efficiently regular Mal'tsev category $\EE$, the functor $U$ is a cofibration. Being also conservative, any map in $\mathsf{AbTors}(\EE)$ is cocartesian, and any fibre $U_A$ above an abelian group $A$ is a groupoid.
\end{theorem}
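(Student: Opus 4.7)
The substantive work is to prove that $U$ is a cofibration; the remaining assertions will then follow formally from conservativity. Given an object $(X,q^X)$ of $\mathsf{AbTors}(\EE)$ with direction $A$ and a homomorphism $g\colon A\to B$ in $\mathsf{Ab}(\EE)$, I would construct an explicit cocartesian lift $(f,g)\colon (X,q^X)\to (Y,q^Y)$ by a pushforward-torsor argument. Informally, define $\psi\colon (X\times B)\times (X\times B) \to B$ by $\psi((x,b),(x',b')) = (b'-b) - g(q^X(x,x'))$, using the abelian group structure on $B$. Let $R\into (X\times B)^2$ be the pullback of $0_B\colon 1\into B$ along $\psi$. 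Then $R$ contains the diagonal (since $\psi$ vanishes there), so it is a reflexive relation on $X\times B$; by the Mal'tsev property (Theorem~\ref{theounit}) it is therefore already an equivalence relation. Since it is cut out by an equaliser, the inclusion $R\into (X\times B)^2 = \nabla_{X\times B}$ is a regular monomorphism; and $\nabla_{X\times B}$ is effective because $X\times B$ has global support ($X$ does by hypothesis and $B$ does as an internal abelian group). By efficient regularity of $\EE$, the relation $R$ is thus effective; let $q\colon X\times B\onto Y$ be its quotient, and define $f\colon X\to Y$ as the composite $X\xrightarrow{(1_X,\,0_B\tau_X)} X\times B \xrightarrow{q} Y$.

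Next, I would check that $\psi$ factors uniquely through $q\times q$ as $\psi = q^Y\circ (q\times q)$ for some $q^Y\colon Y\times Y\to B$: this uses the cocycle identity $q^X(x,y)+q^X(y,z)=q^X(x,z)$, which is built into the discrete-fibration property of $q^X$. By construction, $q^Y(f\times f) = g\cdot q^X$. I would then verify that $(Y,q^Y)$ is an abelian torsor of direction $B$ by showing that the square asserting the discrete-fibration property for $q^Y$ is a pullback; this is done by transferring the analogous property of $q^X$ along the regular epi $q\colon X\times B\onto Y$, via the Barr--Kock theorem and pullback-stability of regular epimorphisms in~$\EE$. Finally, the cocartesian universal property of $(f,g)$ is checked directly: given any competitor $(f',h\cdot g)\colon (X,q^X)\to (Z,q^Z)$ over a further homomorphism $h\colon B\to C$, one builds a map $\xi\colon X\times B\to Z$ from $f'$ and the canonical $C$-action on $Z$ (determined by $q^Z$), then verifies by a short computation with cocycle identities that $\xi$ coequalises $R$, producing the required unique $k\colon Y\to Z$ with $k\cdot f = f'$ and $q^Z\circ(k\times k) = h\cdot q^Y$.

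Once $U$ is known to be a cofibration, the remaining assertions are formal. Conservativity of $U$ alone forces each fibre $U_A$ to be a groupoid: any morphism $(f,1_A)$ in $U_A$ satisfies $U(f,1_A) = 1_A$, an isomorphism, so by conservativity $(f,1_A)$ is itself an isomorphism. Combining this with the cofibration property, an arbitrary morphism $(f,g)\colon (X,q^X)\to (Z,q^Z)$ factors through the cocartesian lift of $g$ at $(X,q^X)$ by a fibre morphism in $U_B$, and this second factor is automatically an isomorphism by the groupoid property; hence $(f,g)$ is isomorphic to, and therefore is, a cocartesian morphism.

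The main obstacle will be verifying that the constructed $(Y,q^Y)$ is genuinely an abelian torsor of direction $B$. The efficient-regularity plus Mal'tsev hypotheses on $\EE$ are essential at that step --- to ensure effectivity of $R$, to get the factorisation of $\psi$ through $q\times q$, and to conclude via Barr--Kock that the pullback square witnessing the discrete-fibration property of $q^Y$ holds. The cocartesian universal property and the formal deductions about fibres are comparatively routine once the target torsor has been built.
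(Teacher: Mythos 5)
Your overall strategy is sound and the formal deductions at the end (conservativity forces each fibre to be a groupoid, and composing a cocartesian lift with a fibre isomorphism shows every map is cocartesian) match what the paper leaves implicit. Where you genuinely diverge is in the construction of the cocartesian lift. The paper splits an arbitrary homomorphism $h\colon d(X)\to B$ as the product injection $(1_{d(X)},0_B)\colon d(X)\to d(X)\times B$ followed by the split (hence regular) epimorphism $\langle h,1_B\rangle$, lifts the first factor by the ``easy'' cocartesian map $(1_X,0_B)\colon X\to X\times B$, and treats the regular\--epi case separately by quotienting by $(g\cdot q^X)^{-1}(0)$, which is a regular subobject of $\nabla_X$ and hence effective by efficient regularity. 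You instead build the classical contracted product $(X\times B)/R$ in a single step for an arbitrary $g$; unwinding the paper's two-step construction shows it produces exactly the same quotient, so the two routes are equivalent, and yours is arguably more transparent (one effectivity argument, one universal-property check) at the cost of having to verify the torsor pullback square for a quotient of $X\times B$ rather than of $X$. Two small points to repair in a full write-up. First, your sign conventions are inconsistent: with $\psi\bigl((x,b),(x',b')\bigr)=(b'-b)-g\bigl(q^X(x,x')\bigr)$ the induced $q^Y$ satisfies $q^Y\circ(f\times f)=-g\cdot q^X$, and the coequalization check for $\xi(x,b)=f'(x)+h(b)$ fails; replacing $b'-b$ by $b-b'$ (or negating $\xi$) fixes everything. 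Second, the step you rightly flag as the main obstacle --- that the square exhibiting $q^Y$ as a discrete fibration over $B$ is a pullback --- is left as a plan; the paper's argument here is the model to follow: the comparison map $Y\times Y\to Y\times B$ is a regular epimorphism by Corollary~\ref{regpush2} (this is where the Mal'tsev hypothesis enters) and a monomorphism by a kernel-pair computation transported along the regular epimorphism $q\times q$. (Also, $\nabla_{X\times B}$ is effective simply because it is $\Eq[\tau_{X\times B}]$; global support is needed only to ensure $Y$ has global support and that $\tau_Y$ is a regular epimorphism.)
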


\begin{proof}
First we shall show that there are cocartesian maps above regular epimorphisms.
Let us start with an object $(X,q^X)$ in $\mathsf{AbTors}(\EE)$ above the abelian group $A$ and a regular epic group homomorphism $g\colon A\onto B$. Let us denote by $R_g\into X\times X$ the subobject $(g\cdot q^X)^{-1}(0)$. It produces an equivalence relation on $X$ which is effective since the monomorphism in question is regular. Let us denote by $q_g\colon X\onto Y$ its quotient. According to Proposition~\ref{regstab}, the object $Y$ is an affine object, with global support, since so is~$X$. 

Let us show now that there is a (necessarily unique)
map $q^Y\colon Y\times Y\to B$ such that $q^Y\cdot (q_g\times q_g)=g\cdot q^X$. Since $q_g\times q_g$ is a regular epimorphism, it is enough to show that $g\cdot q^X$ coequalizes its kernel equivalence relation.  For sake of simplicity we shall set $q^X(x,x')=\overrightarrow{xx'}$. So we have to show that, when we have $xR_gt$ and $x'R_gt'$ (namely $g(\overrightarrow{xt})=0$ and $g(\overrightarrow{x't'})=0$), we get $g(\overrightarrow{xx'})=g(\overrightarrow{tt'})$, which is straightforward. It remains to show that the following square is a pullback of split epimorphisms:
$$ \xymatrix@=20pt{
	Y\times Y \ar@<-1,ex>[d]_{p_0^Y}  \ar@{->>}[r]^-{q^Y} &  B \ar@<+1,ex>[d]^{\tau_B}\\
	Y\ar@{->>}[r]_{\tau_Y}\ar[u]_{s_0^Y} & 1 \ar[u]^{0_B} 
}
$$
First, we can check that the upward square commutes by composition with the regular epimorphism $q_g$. Let us denote by $\psi\colon Y\times Y\to P$ the factorization through the pullback. Since $\EE$ is a Mal'tsev category, $\psi$ is necessarily a regular epimorphism by Corollary~\ref{regpush2}. We can check it is a  monomorphism as well in the following way: consider the kernel equivalence relation $\Eq[\psi\cdot(q_g\times q_g)]$; it is easy to check that it is coequalized  by $q_g\times q_g$. Accordingly $\Eq[\psi]$ is the discrete equivalence relation and $\psi$ is a monomorphism.

Now, given any pair $(X,B)$ of an affine object $X$ with global support and an abelian group $B$, the map $(1_X,0_B)\colon X\to X\times B$ has direction  $(1_{d(X)},0_B)\colon d(X)\to d(X)\times B$, and  it is easy to check that it is cocartesian. Then, starting with any group homomorphism $h\colon d(X)\to B$, we get the following commutative diagram:
$$ \xymatrix@=20pt{
	&  d(X)\times B \ar@<1ex>[rd]^-{<h,1_B>}\\
	d(X)\ar[rr]_h \ar@<1ex>[ur]^-{(1_{d(X)}, 0_B)}  && B \ar@{.>}[lu]^-{(0_{d(X)},1_B)}
}
$$
where the map $<h,1_B>$ comes from the fact that the product is the direct sum as well in the additive category $\mathsf{Ab}(\EE)$. Moreover, this map, being split, is a regular epimorphism. Accordingly the map $h$ has a cocartesian map above it as well.
\end{proof}

\begin{corollary}\cite{Bourn3} 
	Given any efficiently regular Mal'tsev category $\EE$, the fibre $U_A$ above the abelian group $A$ is endowed with a canonical symmetric closed monoidal structure $\otimes_A$ whose unit is $A$.
\end{corollary}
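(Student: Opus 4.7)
The plan is to construct $\otimes_A$ by taking cocartesian lifts along the addition $+_A\colon A\times A\to A$ of the abelian group $A$, and to derive the entire symmetric monoidal closed structure on $U_A$ from the commutative monoid structure of $(A,+_A,0_A)$ inside $\mathsf{Ab}(\EE)$ together with the cofibration property of $U$.

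First I would define the tensor product. Given $(X,q^X),(Y,q^Y)\in U_A$, form their product $(X\times Y,\,q^X\times q^Y)\in U_{A\times A}$, which exists and is preserved by $U$ by the previous proposition, and take the cocartesian lift over $+_A$ to obtain an object $X\otimes_A Y\in U_A$ together with a cocartesian arrow $\mu_{X,Y}\colon X\times Y\to X\otimes_A Y$ above $+_A$. Functoriality in each variable is then automatic from the universal property of cocartesian morphisms. The unit is $(A,q^A)$ with $q^A=p_1-p_0\colon A\times A\to A$; this lies in $U_A$ since $d(A)=A$.

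Next I would verify the symmetric monoidal axioms. Under the conservative functor $U$, each coherence constraint reduces to an identity of morphisms in $\mathsf{Ab}(\EE)$ expressing an axiom of the commutative monoid $(A,+_A,0_A)$, and is then forced by the uniqueness of cocartesian lifts. For instance, both $(X\otimes_A Y)\otimes_A Z$ and $X\otimes_A(Y\otimes_A Z)$ are cocartesian lifts of $+_A\circ(+_A\times 1_A)=+_A\circ(1_A\times +_A)\colon A^3\to A$, yielding the associator; the unit isomorphism comes from $+_A\circ(0_A\times 1_A)=1_A$, the symmetry from the commutativity of $+_A$, and the pentagon, triangle and hexagon then hold automatically.

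For the closed structure I would use that $U_A$ is a groupoid, so that $(U_A,\otimes_A,A)$ is a symmetric monoidal groupoid. For each $Y\in U_A$, define the dual $Y^{*}\in U_A$ by keeping the same underlying object but reversing the torsor structure, $q^{Y^{*}}:=-_A\circ q^Y$. A direct computation using the torsor cocycle identity shows that the map $\sigma\colon Y\times Y^{*}\to A$ sending $(y_1,y_2)$ to $q^Y(y_2,y_1)$ lies above $+_A$; by the previous theorem it is automatically cocartesian in $\mathsf{AbTors}(\EE)$, which forces $Y\otimes_A Y^{*}\cong A$. Setting $[Y,Z]_A:=Z\otimes_A Y^{*}$ and combining associativity with this duality then yields the required natural bijection
$$U_A(X\otimes_A Y,\,Z)\;\cong\;U_A(X,\,[Y,Z]_A).$$
The hard part will be this last step: verifying that $-_A\circ q^Y$ is still a regular epic discrete fibration (hence $Y^{*}\in U_A$), and promoting the duality from an equivalence of isomorphism classes into a genuine natural adjunction in the groupoid $U_A$. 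The remaining steps are essentially formal consequences of the cofibration property of $U$ and the commutative monoid axioms of $A$.
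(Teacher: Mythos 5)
Your construction of the tensor product, the unit, and the coherence isomorphisms is exactly the paper's: there too $X\otimes_AX'$ is the codomain of the cocartesian lift of $X\times X'$ above $+\colon A\times A\to A$, the unit is $A$ itself (presented as the codomain of the cocartesian map with domain $1$ above $0_A\colon 1\to A$, which agrees with your $(A,\,p_1^A-p_0^A)$ by the proposition computing the direction of an abelian group), and associativity, symmetry and the unit constraints are read off from the identities $a+(b+c)=(a+b)+c$, $a+b=b+a$ and $0+a=a$ in $\mathsf{Ab}(\EE)$ via uniqueness of cocartesian lifts. The only genuine divergence is the closed structure. The paper defines $[X,Y]$ directly as the codomain of the cocartesian map with domain $X\times Y$ above the division homomorphism $d(a,b)=b-a$, and produces the adjunction isomorphisms $X\otimes_A[X,Y]\cong Y$ and $X\cong[Y,Y\otimes_AX]$ from the identities $a+(b-a)=b$ and $(b+a)-b=a$. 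You instead form a dual $Y^*$, the lift of $Y$ along $-1_A$, and set $[Y,Z]_A=Z\otimes_AY^*$; the two constructions are isomorphic via the twist, since $Z\otimes_AY^*$ is the lift of $Z\times Y$ above $(c,b)\mapsto c-b$. Both of the points you flag as ``the hard part'' are in fact immediate: $-1_A$ is an automorphism of $A$ in $\mathsf{Ab}(\EE)$, so $-_A\circ q^Y$ is still a regular epic discrete fibration and $Y^*\in U_A$; and once $Y\otimes_AY^*\cong A$, the functor $-\otimes_AY^*$ is a quasi-inverse of $-\otimes_AY$, hence automatically its right adjoint, with naturality in $Y$ given by the parametrised-adjunction lemma. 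What your route buys is the cleaner conceptual statement that $U_A$ is a symmetric monoidal groupoid in which every object is $\otimes_A$-invertible (a Picard groupoid once one passes to isomorphism classes, which is exactly what makes $Ext(A)$ a group); what the paper's route buys is explicit unit and counit maps attached to named identities of $\mathsf{Ab}(\EE)$, which it then uses to describe $X^*=[X,A]$ in set-theoretic terms at the end of the section.
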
 

\begin{proof}
We recalled that $U_A$ is necessarily a groupoid. Given any pair $(X,X')$ of affine objects with global support and direction $A$, the tensor product ${X\otimes_A X'}$ is defined as the codomain of the (regular epic) cocartesian map above $+\colon A\times A \to A$ with domain $X\times X'$. The commutative diagram in $\mathsf{Ab}(\EE)$ expressing the associativity of the group law: $a+(b+c)=(a+b)+c$ produces the desired associative isomorphism $a_{(X,X',X'')}\colon {X\otimes_A(X'\otimes_AX'')}\cong {(X\otimes_AX') \otimes_AX''}$, while the commutative diagram expressing the commutativity of the group law $a+b=b+a$ and the twisting isomorphism $\tau_{(X,X')}\colon X\times X'\cong X'\times X$ produce the symmetric isomorphism $\sigma_{(X,X')}\colon {X\otimes_AX'}\cong X'\otimes_AX$. The unit of this tensor product is determined by the codomain of the cocartesian map with domain $1$ above  $0_A\colon 1\into A$, namely $A$ itself. The left unit isomorphism $A\otimes_AX\cong X$ is produced by the commutative diagram in $\mathsf{Ab}(\EE)$ associated with the left unit axiom $0+a=a$, a similar construction producing the right unit isomorphism.

This monoidal structure is closed since in the abelian context the division map $d(a,b)=b-a$ is a  group homomorphism. We defined $[X,Y]$ as the codomain of the cocartesian map above $d$ with domain $X\times Y$. The commutative diagram determined by $a +(b-a)=b$ induces the isomorphism $X\otimes_A[X,Y]\cong Y$, while the one determined by $(b+a)-b=a$ induces the isomorphism $X\cong [Y,Y\otimes_AX]$.
\end{proof}

Accordingly this produces an abelian group structure on the set $Ext(A)$ of the connected components of the groupoid $U_A$ whose operation is called in classical terms the \emph{Baer sum}. 

Starting with the variety $\mathsf{Mal}$ of Mal'tsev algebras, the subvariety $\mathsf{Aff}(\mathsf{Mal})$ is the variety of associative and commutative Mal'tsev algebras by Proposition~\ref{proposition p autonomous}. An algebra $X$ in $\mathsf{Mal}$ has a global support if and only if it is non-empty. Accordingly the choice of a point in any non-empty affine object makes the fibre $U_A$ a connected groupoid, reduces the group $Ext(A)$ to only one object and makes it invisible.

Now take the example where the Mal'tsev category $\EE$ is the slice category $\mathsf{Gp}/Q$ of the groups above the group $Q$, whose affine objects with global support are the exact sequences with $A$ abelian:
$$ 1\to  A\into X \onto  Q \to 1 $$ 
The direction of this affine object is nothing but the semi-direct product exact sequence produced by the $Q$-module structure on $A$ determined by this exact sequence. And the Baer sum, described above, coincides with the classical Baer sum associated with a given $Q$-module structure on $A$, see for instance Chapter 4, \emph{Cohomology of groups}, in Mac Lane's \emph{Homology}~\cite{MLane}.

To finish this section, let us be a bit more explicit about the construction of $X\otimes_AX'$. In set-theoretic terms, given any pair $(X,X')$ of affine objects with global support, the equivalence relation $R_+$ on $X\times X'$ producing the tensor product is given by $(x,x')R_+(t,t')$ if and only if $\overrightarrow{xt}+\overrightarrow{x't'}=0$, while the direction on $X\otimes_AX'$ is such that $\overrightarrow{\overline{(x,x')},\overline{(z,z')}}=\overrightarrow{xz}+\overrightarrow{x'z'}$. 

The inverse of an affine object $X$ with global support and direction $A$ is the affine object $X^*=[X,A]$, namely the
quotient of $X\times A$ by the equivalence relation $R_d$ defined by $(x,a)R_d(x',a')$ if and only if $a'-a-\overrightarrow{xx'}=0$. The direction of $X^*$ is such that $\overrightarrow{\overline{(x,a)},\overline{(z,b)}}=b-a-\overrightarrow{xz}$. As expected, we can check that we have an isomorphism $\gamma\colon  X\to X^*$ of affine objects defined by $\gamma(x)=\overline{(x,0)}$, whose direction satisfies: $d(\gamma)(\overrightarrow{xx'})=\overrightarrow{\overline{(x,0)},\overline{(x',0)}}=-\overrightarrow{xx'}$.


\end{document}